\DeclareMathOperator{\Ker}{Ker}
\newcommand{\ZZ}{{\mathbb{Z}}}
\newtheorem{theorem}{Théorème}[section]
\newtheorem{lemma}[theorem]{Lemme}
\newtheorem{conjecture}[theorem]{Conjecture}
\newtheorem{proposition}[theorem]{Proposition}
\newtheorem{definition}[theorem]{Définition}
\newtheorem{corollary}[theorem]{Corollaire}
\begin{document}

\title{Homothéties explicites des représentations $\ell$-adiques}

\author{Aur\'elien Galateau \& C\'esar Mart\'inez}
\address{Universit\'e de Cergy-Pontoise}
\email{aurelien.galateau@u-cergy.fr}
\address{Universität Regensburg}
\email{cesar.martinez@ur.de}

\begin{abstract}
Cet article présente et précise les principaux résultats connus sur la taille du sous-groupe des homothéties des représentations $\ell$-adiques associées à la torsion d'une variété abélienne. De telles estimations permettent notamment de donner des bornes uniformes explicites dans le cadre du problème de Manin-Mumford.
\end{abstract}

\maketitle

\section{Introduction}

La question de la distribution des points de torsion d'une courbe algébrique plongée dans sa jacobienne est posée par Lang en 1965 (\cite{Lang65}). Inspiré par des travaux de Manin et par ses discussions avec Mumford, il formule une conjecture qui sera démontrée une vingtaine d'années plus tard.
\begin{theorem}[Raynaud]
Si $C$ est une courbe algébrique de genre au moins deux, définie sur un corps de nombres et plongée dans sa jacobienne, elle contient un nombre fini de points de torsion. 
\end{theorem}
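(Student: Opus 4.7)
Le plan est de suivre la strat\'egie due \`a Hindry, o\`u l'action du groupe de Galois absolu sur les points de torsion est combin\'ee avec la pr\'esence, dans l'image galoisienne de la repr\'esentation sur $J[n]$, d'un sous-groupe non trivial d'homoth\'eties. On plonge $C$ dans sa jacobienne $J$ par Abel-Jacobi, apr\`es choix d'un point base $K$-rationnel pour un corps de nombres $K$ de d\'efinition commune de $C$ et du plongement. On raisonne par l'absurde en supposant que $C\cap J(\overline{K})_{\mathrm{tors}}$ est infini : comme $C$ est irr\'eductible et de dimension $1$, les points de torsion y sont alors Zariski-denses.

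La $K$-rationalit\'e de $C$ entra\^ine que pour tout $P\in C\cap J(\overline{K})_{\mathrm{tors}}$ et tout $\sigma\in\Gal(\overline{K}/K)$, le point $\sigma(P)$ appartient encore \`a $C$. Les r\'esultats explicites sur les homoth\'eties qui font l'objet du pr\'esent article fournissent, pour tout entier $n$ parcourant une famille infinie, un entier $a=a_n$ non congru \`a $\pm 1$ modulo $n$ tel que l'homoth\'etie $[a]$ appartienne \`a l'image de Galois dans $\Aut(J[n])$. En relevant un tel scalaire en un \'el\'ement de $\Gal(\overline K/K)$, on voit que la multiplication $[a]$ pr\'eserve l'ensemble fini $C\cap J[n]$ des points de $n$-torsion de $C$.

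Pour conclure, on invoque le crit\`ere de Hindry : une sous-vari\'et\'e d'une vari\'et\'e ab\'elienne dont l'ensemble des points de torsion est Zariski-dense et stable, sur chaque niveau de torsion $J[n]$ parcourant une famille cofinale, sous une multiplication scalaire $[a_n]$ avec $a_n\not\equiv\pm 1\pmod n$ est une r\'eunion finie de translat\'es, par des points de torsion, de sous-vari\'et\'es ab\'eliennes. Appliqu\'e \`a la courbe irr\'eductible $C$, ce crit\`ere force $C$ \`a \^etre un translat\'e par un point de torsion d'une sous-vari\'et\'e ab\'elienne de $J$ de dimension au plus $1$, donc d'une courbe elliptique ; or cette identification est exclue pour $g(C)\geq 2$ par la propri\'et\'e universelle de la jacobienne, d'o\`u la contradiction recherch\'ee. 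L'obstacle principal, qui constitue l'apport technique central de l'article, est la production effective des homoth\'eties $[a_n]$ ci-dessus ; c'est pr\'ecis\'ement cette effectivit\'e qui permet d'en d\'eduire une borne explicite et uniforme sur l'ordre des points de torsion de $C$.
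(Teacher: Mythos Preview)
The paper does not prove this statement: Raynaud's theorem is quoted in the introduction as a known background result, and the article's purpose is to make the exponent in Serre's homothety theorem explicit so as to obtain effective bounds in the Manin--Mumford problem, not to reprove finiteness itself. There is therefore no proof in the paper to compare your proposal against.

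Your sketch follows the Lang--Hindry strategy, which is a valid route to the theorem---though note that it is not Raynaud's original 1983 argument (based on $p$-adic methods and reduction modulo $p^2$), but rather Hindry's 1988 reproof, which the paper mentions as the approach it seeks to render effective. Two points should be sharpened. First, the ``crit\`ere de Hindry'' as you state it---stability of $C\cap J[n]$ under some $[a_n]$ with $a_n\not\equiv\pm1\pmod n$, for a cofinal family of $n$ with $a_n$ allowed to vary---is not the form in which the argument actually runs: the decisive consequence of Serre's theorem is that a \emph{fixed} integer $m\ge 2$ (for instance $m=2^{c(J)}$) sends every torsion point of $C$ of order prime to $m$ back to $C$, so that $C\cap[m]^{-1}C$ is Zariski-dense in $C$; one then invokes Hindry's lemma that an irreducible $V$ with $[m]V=V+t$ for some $m\ge 2$ and some torsion point $t$ must be a torsion coset. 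With a family of unrelated $a_n$ the conclusion does not follow without further work. Second, your last paragraph conflates two distinct goals: for the bare finiteness asserted in the theorem, Serre's non-effective result already suffices and nothing from the present article is required; the explicit constants developed here serve only to upgrade finiteness to the explicit uniform bounds on $|C_{\mathrm{tors}}|$ discussed in the final section.
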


À la lumière des travaux d'Ihara, Serre et Tate sur l'analogue torique de ce problème, Lang montre que certaines propriétés galoisiennes des points de torsion pourraient permettre de résoudre la question posée par Manin et Mumford. 

\medskip

Soit $A$ une variété abélienne de dimension $g \geq 1$ définie sur un corps de nombres $K$ dont on fixe une clôture algébrique $\bar{K}$. Si $\ell$ est dans l'ensemble $\mathcal{P}$ des nombres premiers, l'action du groupe de Galois $G_K:= \mathrm{Gal}(\bar{K}/K)$ sur le module de Tate $T_{\ell}$ de $A$ (dont on fixe une base sur $\mathbb{Z}_{\ell}$) définit une représentation : $$\rho_{\ell}: G_K \rightarrow \mathrm{GL}_{\mathbb{Z}_{\ell}}(T_{\ell}) \simeq \mathrm{GL}_{2g}(\mathbb{Z}_{\ell}),$$ 
dont l'image sera désormais notée $G_{\ell}$. L'hypothèse faite par Lang est devenue la conjecture suivante, toujours ouverte aujourd'hui.
\begin{conjecture}[Lang]
Si $\ell$ est assez grand, le groupe $G_{\ell}$ contient le sous-groupe des homothéties $\ZZ_{\ell}^{\times}$ de $\mathrm{GL}_{2g}(\mathbb{Z}_{\ell}).$ \end{conjecture}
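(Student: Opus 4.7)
The plan is to combine Bogomolov's Lie-theoretic result with a careful analysis of the mod-$\ell$ representation. The first input is the theorem of Bogomolov asserting that the Lie algebra $\mathfrak{g}_\ell \subset \mathfrak{gl}_{2g}(\QQ_\ell)$ of $G_\ell$ always contains the line of scalar matrices $\QQ_\ell \cdot I_{2g}$. Exponentiating yields an integer $c = c(A, \ell) \geq 0$ such that $(1 + \ell^c \ZZ_\ell) \cdot I_{2g} \subseteq G_\ell$. The entire content of the conjecture then reduces to showing that $c(A,\ell) = 0$ for $\ell$ beyond an explicit threshold depending only on $A$ and $K$.

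Second, to upgrade an open subgroup of scalars into the full group $\ZZ_\ell^\times \cdot I_{2g}$, I would work with the residual representation $\bar{\rho}_\ell : G_K \to \mathrm{GL}_{2g}(\FF_\ell)$ and try to show that its image contains $\FF_\ell^\times \cdot I_{2g}$. A crucial input here is the determinant identity $\det \rho_\ell = \chi_\ell^g$ (where $\chi_\ell$ is the $\ell$-adic cyclotomic character), which follows from the Weil pairing on $T_\ell$. This already gives $\det G_\ell = \ZZ_\ell^\times$ whenever $\ell$ is unramified in $K$ and sufficiently large in terms of $[K:\QQ]$. The remaining task is to produce elements of $G_\ell$ which are genuinely scalar modulo $\ell$ and whose reductions generate $\FF_\ell^\times$.

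Third, such scalars should come from the inertia at primes $\mathfrak{l}$ above $\ell$. By $\ell$-adic Hodge theory and the theory of finite flat group schemes of Raynaud, the action of tame inertia on $T_\ell A / \ell T_\ell A$ at a prime of good reduction is governed by the Hodge-Tate cocharacter, whose weights are $0$ and $1$, each appearing with multiplicity $g$. Packaging this with the determinant identity and, when necessary, with information at primes of bad reduction coming from N\'eron models, should produce a scalar in $\bar\rho_\ell(G_K)$ of order $\ell - 1$, hence generating $\FF_\ell^\times \cdot I_{2g}$ and forcing $c(A, \ell) = 0$.

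The principal obstacle is precisely this last step. While the qualitative statement on $\mathfrak{g}_\ell$ is classical, extracting an honest scalar in the image of $\bar\rho_\ell$ requires delicate control of both tame and wild inertia at primes above $\ell$, and this control must be uniform in $\ell$. Concretely, one must show that the \emph{homothetic character} obtained by projecting $G_\ell$ onto its scalar part surjects onto $\ZZ_\ell^\times$, not merely onto an open subgroup of finite (but possibly unbounded) index; ruling out an obstruction of order dividing $\ell - 1$ uniformly is exactly the missing input, and explains why the conjecture remains open in its full generality. The present paper can be expected to make $c(A,\ell)$ explicit in terms of the dimension $g$, the field $K$, and the primes of bad reduction, even without reaching $c(A,\ell) = 0$, thereby yielding the uniform Manin-Mumford bounds advertised in the abstract.
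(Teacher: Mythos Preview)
The statement is Lang's \emph{conjecture}, which the paper explicitly declares to be still open; it is not proved there, and your proposal rightly concedes this in its last paragraph. So the comparison is between two partial approaches, not two proofs.

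Your step 3 contains a genuine gap. Knowing that the Hodge--Tate weights of $\rho_\ell$ at a place $v\mid\ell$ of good reduction are $0$ and $1$, each with multiplicity $g$, does \emph{not} produce a scalar in $\bar\rho_\ell(I_v)$. On the contrary, the tame inertia acts (on the semisimplification) through a direct sum of fundamental characters of various levels, with $g$ eigenvalues equal to $1$ and $g$ nontrivial; this is as far from scalar as possible. So ``packaging this with the determinant identity'' cannot by itself yield a scalar of order $\ell-1$: the determinant recovers the product of the eigenvalues, not the eigenvalues themselves. Producing an honest scalar from inertia alone is precisely what fails, and this is not a technicality but the heart of the problem.

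The paper's route is structurally different and explains where the obstruction really lives. One first passes to the abelianized representation
\[
\rho_\ell^{\mathrm{ab}} : G_K \longrightarrow H_\ell(\QQ_\ell)/H_\ell(\QQ_\ell)',
\]
which lands in a torus; for this representation the Hodge--Tate property does give, via local algebraicity \`a la Serre--Tate, that the image of $\ZZ_\ell^\times$ lies in $\rho_\ell^{\mathrm{ab}}(G_K)$ (for $\ell$ unramified in $K$). The loss then occurs when lifting back across $H_\ell(\QQ_\ell)'$: by Wintenberger's theorem one has $\mathcal{S}_\ell(\ZZ_\ell)'\subset G_\ell$ for $\ell\ge\ell_0$, so the defect is controlled by the fundamental group of the semisimple part, an integer $c\mid (2g)!$. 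The paper notes that the Mumford--Tate conjecture would force $c=1$, hence Lang. Thus the obstruction is not an inertial one of order dividing $\ell-1$ as you suggest, but a global one coming from $\pi_1$ of the derived group; your framing via a ``homothetic character'' obtained by projection onto the scalar part tacitly assumes such a projection exists inside $G_\ell$, which already presupposes what is to be proved.
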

 
De nombreux résultats sont connus en direction de cette conjecture. Bogomolov a d'abord démontré, en utilisant la théorie de Hodge-Tate, que les représentations $\ell$-adiques contiennent les homothéties à un indice fini près (voir \cite{Bogomolov80}).
\begin{theorem}[Bogomolov]
\label{bogomolov}
Le groupe $G_{\ell}$ contient un sous-groupe ouvert du groupe $\ZZ_{\ell}^{\times}$ des homothéties de $\mathrm{GL}_{2g}(\mathbb{Z}_{\ell}).$ 
\end{theorem}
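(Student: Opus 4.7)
Le plan est d'utiliser la th\'eorie de Hodge-Tate de l'action de Galois sur le module de Tate pour exhiber un \'el\'ement scalaire non trivial dans l'alg\`ebre de Lie $\mathfrak{g}_\ell$ de $G_\ell$, consid\'er\'ee comme sous-alg\`ebre de $\mathfrak{gl}_{2g}(\QQ_\ell)$. L'\'enonc\'e est en effet \'equivalent \`a montrer que $\mathrm{Id}_{2g} \in \mathfrak{g}_\ell$.

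Je commencerais par r\'eduire au cas o\`u $A$ a bonne r\'eduction en toutes les places de $K$ au-dessus de $\ell$, en rempla\c{c}ant $K$ par une extension finie (crit\`ere de N\'eron-Ogg-Shafarevich)\,: l'\'enonc\'e \'etant invariant par passage \`a un sous-groupe d'indice fini, cette r\'eduction ne co\^ute rien. L'accouplement de Weil fournit d\'ej\`a la relation $\det \rho_\ell = \chi_\ell^g$, o\`u $\chi_\ell$ est le caract\`ere cyclotomique $\ell$-adique, ce qui implique que l'application trace $\mathfrak{g}_\ell \to \QQ_\ell$ est surjective\,; mais cette seule information globale ne suffit pas pour garantir la pr\'esence de $\mathrm{Id}_{2g}$ dans $\mathfrak{g}_\ell$.

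L'ingr\'edient d\'eterminant est local. Pour $v \mid \ell$, la repr\'esentation $V_\ell := T_\ell \otimes \QQ_\ell$ restreinte au groupe de d\'ecomposition en $v$ est de Hodge-Tate, de poids $0$ et $1$ chacun avec multiplicit\'e $g$, gr\^ace au th\'eor\`eme de Tate sur les groupes $\ell$-divisibles associ\'es \`a $A$. Le th\'eor\`eme de Sen attache alors un op\'erateur canonique $\Theta \in \mathfrak{g}_\ell \otimes_{\QQ_\ell} \CC_\ell$, provenant de l'image de l'inertie en $v$, qui agit semi-simplement sur $V_\ell \otimes \CC_\ell$ avec spectre $\{0^{(g)}, 1^{(g)}\}$. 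En particulier, $\mathrm{tr}(\Theta) = g$ et la projection de $\Theta$ sur la droite des scalaires $\CC_\ell \cdot \mathrm{Id}_{2g}$ vaut $\tfrac{1}{2}\mathrm{Id}_{2g}$, donc est non nulle.

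L'obstacle principal est la derni\`ere \'etape\,: \emph{isoler} la composante scalaire de $\Theta$ \`a l'int\'erieur de $\mathfrak{g}_\ell$ elle-m\^eme. L'alg\`ebre $\mathfrak{g}_\ell$ \'etant alg\'ebrique (c'est l'alg\`ebre de Lie de l'enveloppe de Zariski de $G_\ell$ dans $\mathrm{GL}_{2g, \QQ_\ell}$), elle admet une d\'ecomposition selon sa structure r\'eductive en une partie semi-simple $\mathfrak{s}_\ell \subset \mathfrak{sl}_{2g}$ et un centre $\mathfrak{z}_\ell$. La partie centrale $\Theta_z \in \mathfrak{z}_\ell \otimes \CC_\ell$ est encore de trace $g$, donc non nulle. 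Apr\`es d\'ecomposition de $V_\ell$ en composantes isotypiques pour $\mathfrak{g}_\ell$ et analyse des contributions scalaires via la fonctorialit\'e de la construction de Sen, on montre que $\mathfrak{z}_\ell$ contient la droite des homoth\'eties, puis l'on descend de $\CC_\ell$ \`a $\QQ_\ell$ gr\^ace \`a la rationalit\'e de $\mathrm{Id}_{2g}$ sur $\QQ_\ell$. La subtilit\'e principale tient \`a ce que $V_\ell$ n'est pas n\'ecessairement irr\'eductible comme $\mathfrak{g}_\ell$-module\,; il faut contr\^oler finement les poids de Hodge-Tate sur chaque composante isotypique pour s'assurer que la partie centrale de $\Theta$ agit effectivement par une homoth\'etie globale, et non par des scalaires distincts sur diff\'erentes composantes.
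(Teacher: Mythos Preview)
L'article ne d\'emontre pas ce th\'eor\`eme en d\'etail : il renvoie \`a \cite{Bogomolov80} et, lorsqu'il en reprend les ingr\'edients (\S2.4 et \S4), s'appuie sur la combinaison de deux faits cit\'es : l'ouverture de $G_\ell$ dans $H_\ell(\QQ_\ell)$, et la remarque de Deligne (\cite{Serre77}, 2, 3) qui place directement $\ZZ_\ell^\times$ dans $H_\ell(\QQ_\ell)$ par un argument tannakien. Ta strat\'egie via l'op\'erateur de Sen est plus proche de la preuve originale de Bogomolov, mais la remarque de Deligne court-circuite pr\'ecis\'ement la difficult\'e sur laquelle tu butes. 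Signalons au passage que ta parenth\`ese justifiant l'alg\'ebricit\'e de $\mathfrak{g}_\ell$ est circulaire : que $\mathfrak{g}_\ell$ co\"incide avec l'alg\`ebre de Lie de l'adh\'erence de Zariski de $G_\ell$ est le th\'eor\`eme principal de \cite{Bogomolov80}, non une d\'efinition.

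La r\'esolution que tu proposes pour l'\'etape finale ne fonctionne pas. Tu veux montrer que la partie centrale $\Theta_z$ de l'op\'erateur de Sen agit par une homoth\'etie globale en contr\^olant les poids de Hodge--Tate sur chaque composante isotypique ; mais ces poids n'y sont pas toujours \'equilibr\'es. Prends $A = E$ une courbe elliptique \`a multiplication complexe par un corps quadratique imaginaire $F \subset K$, avec $\ell$ d\'ecompos\'e dans $F$. Alors $V_\ell = V_1 \oplus V_2$ o\`u les $V_i$ sont de dimension~$1$, non isomorphes comme $G_K$-modules, donc composantes isotypiques distinctes ; en une place $v \mid \ell$, l'un des $V_i$ est de poids de Hodge--Tate~$0$ et l'autre de poids~$1$. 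Ici $\mathfrak{g}_\ell$ est ab\'elienne, donc $\Theta = \Theta_z$ agit par les scalaires $(0,1)$ : ce n'est pas une homoth\'etie. La conclusion $\mathrm{Id}_2 \in \mathfrak{g}_\ell$ reste vraie puisque $\mathfrak{g}_\ell = F \otimes \QQ_\ell \simeq \QQ_\ell^2$, mais elle ne se lit pas sur $\Theta$ en une seule place. Pour conclure, il faut soit combiner les op\'erateurs de Sen en toutes les places au-dessus de $\ell$, soit passer par l'ab\'elianis\'ee $\rho_\ell^{\mathrm{ab}}$ et l'alg\'ebricit\'e locale de Tate comme le fait l'article au \S4, soit invoquer directement Deligne.
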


Si on considère le \og groupe de monodromie \fg \, $H_{\ell}$, c'est-à-dire l'enveloppe algébrique de $G_{\ell}$ dans $\mathrm{GL}_{2g}$ sur $\mathbb{Q}_{\ell}$, les travaux de Bogomolov montrent en fait que $G_{\ell}$ est un sous-groupe ouvert -- pour la topologie $\ell$-adique -- de $H_{\ell}(\mathbb{Q}_{\ell})$. Serre, avec des arguments plus sophistiqués, a ensuite donné une version uniforme du résultat de Bogomolov.
\begin{theorem}[Serre]
\label{homothéties serre}
Il existe un entier $c(A)>0$ tel que : $$\forall \ell \in \mathcal{P}, \; (\ZZ_{\ell}^{\times})^{c(A)} \subset G_{\ell}.$$
\end{theorem}

\noindent
Ce résultat, annoncé dans ses cours au Collège de France (\cite{Serre86}, 136), a ensuite été exposé en détail par Wintenberger (\cite{Wintenberger02}).
\medskip

Le théorème de Serre se révèle être suffisant pour mener à bien la stratégie imaginée par Lang dans le problème de Manin-Mumford. Suivant cette voie, Hindry a généralisé le théorème de Raynaud aux sous-variétés des variétés semi-abéliennes (\cite{Hindry88}). 

\medskip

En combinant une variante de cette approche développée par Beukers et Smyth (\cite{Beukers-Smyth02}) avec de nouveaux arguments d'interpolation, nous avons obtenu dans \cite{Galateau-Martinez17} des bornes uniformes permettant de quantifier la torsion dans les sous-variétés des variétés abéliennes. Pour rendre ces estimations totalement explicites, il faudrait pouvoir estimer la \og constante de Serre \fg, c'est-à-dire l'exposant apparaissant dans le Théorème \ref{homothéties serre}. Nous nous proposons ici de rassembler et de préciser les résultats connus sur cette question.

\subsection*{Plan de l'article}

Nous revisitons d'abord les travaux de Serre et Wintenberger, qui montrent que pour un nombre premier $\ell$ suffisamment grand, le groupe $G_{\ell}$ contient un sous-groupe \og assez grand \fg \, du groupe dérivé $H_{\ell}(\mathbb{Q}_{\ell})'$. Il est alors possible de borner simplement l'exposant du groupe d'homothéties en fonction de $g$. Toute la difficulté ici est d'estimer la taille des nombres premiers pour lesquels ce phénomène a lieu, et ceci a été partiellement réalisé par Zywina (\cite{Zywina19}).

\medskip

Nous reprenons ensuite des résultats classiques de cohomologie galoisienne sur les corps locaux démontrés par Tate dans \cite{Tate67}, que nous précisons en estimant les sauts de ramification d'une extension procyclique. La détermination de certains groupes de cohomologie permet d'exploiter la décomposition de Hodge-Tate des représentations abéliennes $\ell$-adiques pour montrer leur algébricité locale.

\medskip

Le cas des grands premiers étant essentiellement compris, il suffit alors de donner une version explicite du théorème de Bogomolov. Ce problème se décompose en deux parties. La première consiste à contrôler l'écart entre $G_{\ell}$ et $H_{\ell}(\mathbb{Q}_{\ell})'$, ce qui semble actuellement hors d'atteinte. La seconde revient à estimer la taille des homothéties dans une représentation abélienne qui a la propriété d'être de Hodge-Tate. Nous le faisons ici en reprenant l'exposé donné par Serre dans \cite{Serre68}, III A.

\medskip

Nous revenons enfin sur le problème de Manin-Mumford et nous explicitons les résultats uniformes déjà connus dans deux cas particuliers importants : les puissances de courbes elliptiques et les variétés abéliennes CM. De telles bornes, lorsqu'elles sont calculables, permettent de déterminer effectivement le lieu de torsion recherché.

\medskip

Nous tenons à remercier la Deutsche Forschungsgemeinschaft et le programme SFB 1085 : {\it Higher Invariants -- Interactions between Arithmetic Geometry and Global Analysis} pour leur soutien pendant  la conception et la rédaction de cet article.

\section{Constante de Serre pour les grands premiers}

Serre a d'abord exposé la preuve du Théorème \ref{homothéties serre} dans son cours au Collège de France de 1985-1986. Les grandes lignes de la démonstration apparaissent dans sa correspondance de l'époque (en particulier la lettre à Bertrand \cite{Serre86}, 134). Elles ont été reprises par Wintenberger dans \cite{Wintenberger02}, où il est aussi prouvé que la conjecture de Mumford-Tate entraîne la conjecture de Lang. 

\subsection{Extension des scalaires}

Un certain nombre de simplifications apparaissent à condition de se placer sur une extension finie de $K$, dont le degré peut être borné explicitement en fonction de $g$. C'est ce qu'on fera dans la suite de cet article.

On commencera par choisir $K$ assez grand pour que $A$ ait réduction semi-stable en tout idéal premier de l'anneau des entiers $\mathcal{O}_K$. Il suffit pour cela de remplacer $K$ par $K(A[12])$, qui est une extension de $K$ de degré $\leq 12^{4g^2}$ (\cite{Grothendieck72}, Exposé IX, proposition 4.7).
Une propriété intéressante apparaît avec l'extension qu'on a fixée : les groupes de monodromie sont simultanément connexes.

\begin{lemma}
Pour tout $\ell \in \mathcal{P}$, $H_{\ell}$ est connexe.
\end{lemma}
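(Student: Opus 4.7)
Le plan est d'associer \`a $H_\ell^\circ$ une sous-extension galoisienne finie $K_\ell/K$ de $\bar K/K$, puis de montrer que cette sous-extension est triviale sous l'hypoth\`ese de r\'eduction semi-stable; la connexit\'e de $H_\ell$ \'equivaudra alors \`a l'\'egalit\'e $K_\ell = K$.

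Je poserais d'abord $U_\ell := \rho_\ell^{-1}(H_\ell^\circ(\QQ_\ell))$. Comme $H_\ell^\circ$ est caract\'eristique dans $H_\ell$, le groupe $U_\ell$ est un sous-groupe ouvert normal de $G_K$ d'indice fini $[H_\ell : H_\ell^\circ]$; en notant $K_\ell := \bar K^{U_\ell}$ son corps fixe, $K_\ell/K$ est une extension galoisienne finie dont le groupe de Galois s'identifie canoniquement \`a $H_\ell/H_\ell^\circ$. Montrer que $H_\ell = H_\ell^\circ$ revient alors \`a prouver $K_\ell = K$.

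Je montrerais ensuite que $K_\ell/K$ est non ramifi\'ee en toute place finie de $K$. Pour une place $v$ de caract\'eristique r\'esiduelle $p \neq \ell$, la r\'eduction semi-stable de $A$ en $v$ et le crit\`ere de Grothendieck (\cite{Grothendieck72}, Expos\'e IX) entra\^inent que l'inertie $I_v$ agit sur $T_\ell A$ par automorphismes unipotents. L'adh\'erence de Zariski de $\rho_\ell(I_v)$ dans $\mathrm{GL}_{2g}$ est donc un sous-groupe alg\'ebrique unipotent, connexe puisque l'on est en caract\'eristique nulle, et par cons\'equent contenu dans $H_\ell^\circ$, ce qui donne $I_v \subset U_\ell$. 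Pour une place $v$ au-dessus de $\ell$, l'argument de Grothendieck ne s'applique pas tel quel, et je le remplacerais par son analogue $p$-adique : la restriction de $\rho_\ell$ \`a un groupe de d\'ecomposition en $v$ est semi-stable au sens de Fontaine, et l'op\'erateur de monodromie de la repr\'esentation de Weil-Deligne associ\'ee fournit encore une action unipotente de l'inertie, \`a valeurs dans $H_\ell^\circ(\QQ_\ell)$.

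Je conclurais enfin par un argument d'ind\'ependance en $\ell$ d\^u \`a Serre (\cite{Serre86}, 134) et d\'evelopp\'e en d\'etail par Wintenberger (\cite{Wintenberger02}) : le groupe fini $\Gal(K_\ell/K)$ ne d\'epend pas de $\ell$, et l'extension $K_\ell/K$ est d\'ej\`a contr\^ol\'ee par le sous-groupe de torsion $A[12]$. Puisque $K \supseteq K(A[12])$ par construction, ceci force $K_\ell = K$, d'o\`u la connexit\'e voulue.

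L'obstacle principal r\'eside dans le traitement des places au-dessus de $\ell$, o\`u l'argument \'el\'ementaire de Grothendieck doit \^etre remplac\'e par la th\'eorie des repr\'esentations $\ell$-adiques semi-stables de Fontaine. Un point technique secondaire consiste \`a rendre l'\'etape d'ind\'ependance en $\ell$ suffisamment explicite pour v\'erifier que $A[12]$ trivialise bien le quotient $H_\ell/H_\ell^\circ$; c'est pr\'ecis\'ement ici que l'agrandissement initial de $K$ \`a $K(A[12])$ joue son r\^ole essentiel.
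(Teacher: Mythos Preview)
Your argument at places $v \mid \ell$ in step 2 does not go through. Semi-stability in Fontaine's sense yields a nilpotent monodromy operator $N$ on the filtered $(\varphi,N)$-module $D_{\mathrm{st}}(V)$, not on $V = T_\ell \otimes \QQ_\ell$ itself; the action of $I_v$ on $V$ is very far from unipotent --- already in the good-reduction case it carries the cyclotomic character, so the Zariski closure of $\rho_\ell(I_v)$ contains a nontrivial torus. There is therefore no direct passage from $p$-adic Hodge theory to the inclusion $\rho_\ell(I_v) \subset H_\ell^\circ(\QQ_\ell)$ that your sketch suggests.

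Step 2 is in any case both insufficient and superfluous. Insufficient, because an everywhere-unramified extension $K_\ell/K$ need not be trivial (number fields admit nontrivial Hilbert class fields). Superfluous, because the entire weight of the proof already lies in your step 3: Serre's theorem that $K_\ell$ is independent of $\ell$, refined by Larsen--Pink to the inclusion $K_{\mathrm{conn}} \subset \bigcap_\ell K(A[\ell^\infty])$. This is exactly the paper's proof: it invokes \cite{Larsen-Pink97}, th\'eor\`eme 0.1 (and Serre's letter to Ribet, \cite{Serre86}, 133, rather than 134), bounds $[K_{\mathrm{conn}}:K] \le \prod_{k=1}^{2g}(2^k-1)(3^k-1) \le 6^{3g^2}$ via the inclusion in $K(A[2^\infty]) \cap K(A[3^\infty])$, and concludes that the preliminary base change absorbs $K_{\mathrm{conn}}$. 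Once Larsen--Pink is granted, your ramification analysis can be discarded; conversely, without the independence-of-$\ell$ input, unramifiedness at $v \mid \ell$ is only available \emph{a posteriori}, by passing to an auxiliary $\ell' \neq \ell$ and using $K_\ell = K_{\ell'}$.
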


\begin{proof}
Voir \cite{Larsen-Pink97}, théorème 0.1, qui précise le résultat démontré par Serre dans sa lettre à Ribet (\cite{Serre86}, 133). Le plus petit corps de définition rendant connexes tous les $H_{\ell}$ est inclus dans : $$\bigcap_{\ell \in \mathcal{P}}K(A[\ell^{\infty}]) \subset K(A[2^{\infty}]) \cap K(A[3^{\infty}]).$$ Son degré sur $K$ est donc borné par : $$\prod_{k=1}^{2g}(2^k-1)(3^k-1) \leq 6^{3g^2},$$
et on voit que l'extension choisie pour assurer la semi-stabilité partout convient également ici.
\end{proof}

On peut aussi se demander si les représentations $\rho_{\ell}$ sont indépendantes dans le sens suivant.

\begin{definition}
Les $(\rho_{\ell})_{\ell \in \mathcal{P}}$ sont indépendantes si le morphisme produit : $$\rho:= (\rho_{\ell})_{\ell \in \mathcal{P}} : G_K \longrightarrow \prod_{\ell} G_{\ell}$$ est surjectif.
\end{definition}

\noindent
Dans ce cas, on a : $\rho(G_K) = \prod_{\ell} G_{\ell}$ et il est possible de recoller les résultats obtenus sur chaque représentation $\ell$-adique.

\begin{proposition}
Quitte à étendre $K$, on peut considérer que les $\rho_{\ell}$ sont indépendantes.
\end{proposition}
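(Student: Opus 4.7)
Le plan est de montrer que l'image $G := \rho(G_K)$ est un sous-groupe ouvert du produit $\prod_{\ell \in \cP} G_\ell$. Comme ce produit est un groupe profini compact, l'ouverture de $G$ équivaut à ce que $G$ y soit d'indice fini, indice qui correspond à une extension finie $L/K$ telle que $\rho(G_L) = \prod_\ell \rho_\ell(G_L)$. En remplaçant $K$ par $L$ on obtient l'indépendance souhaitée, et l'extension effectuée reste de degré borné explicitement en fonction de $g$, cohérente avec la démarche quantitative développée ici.

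Pour établir l'ouverture de $G$, j'utiliserais un argument de Goursat-Ribet itéré : si $G$ n'est pas ouvert, il existe $\ell_1 \neq \ell_2$ dans $\cP$ et un quotient simple fini non trivial commun à $G_{\ell_1}$ et $G_{\ell_2}$. Pour $\ell$ assez grand, le Théorème \ref{bogomolov} de Bogomolov et la connexité du groupe réductif $H_\ell$ déjà acquise impliquent que les quotients simples non abéliens de $G_\ell$ sont des groupes finis de type Lie en caractéristique $\ell$; la classification entraîne alors que deux tels groupes pour $\ell_1 \neq \ell_2$ ne peuvent être isomorphes qu'à un nombre fini d'exceptions près. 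Les quotients abéliens communs se contrôlent en parallèle grâce à la théorie du corps de classes et à la décomposition de Hodge-Tate de $G_\ell^{\ab}$, selon la stratégie esquissée par Serre (\cite{Serre86}, 133).

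L'obstacle principal concerne le traitement des petits premiers exceptionnels, pour lesquels les arguments asymptotiques précédents ne s'appliquent pas directement : un nombre fini de $\ell$ peut encore produire des quotients communs non triviaux. On les absorbe en remplaçant $K$ par $K(A[N])$ pour un entier $N$ dépendant explicitement de ces premiers exceptionnels, ce qui revient à ajouter au corps de base la torsion nécessaire pour tuer toutes les congruences résiduelles détectées par Goursat. Obtenir une valeur effective pour $N$ compatible avec les estimations du reste de l'article constitue le c\oe ur quantitatif de la preuve, et demande en particulier de quantifier uniformément les théorèmes de Bogomolov et de Serre pour les petits premiers.
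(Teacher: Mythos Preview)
The paper's proof is a one-line citation: it invokes Serre's theorem (\cite{Serre13}, th\'eor\`eme~1; see also \cite{Serre86}, 138), which asserts that the $\rho_\ell$ become independent after replacing $K$ by the finite extension $\bigcap_{\ell \in \cP} K'_\ell$, where $K'_\ell$ is the field of rationality of all torsion of order prime to~$\ell$. No further argument is given.

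Your sketch is, in outline, the strategy Serre himself uses to \emph{prove} that theorem: the Goursat--Ribet lemma reduces the failure of independence to a common finite simple quotient of two $G_\ell$'s, the non-abelian quotients are handled for large $\ell$ via the classification of simple groups of Lie type, and the abelian quotients via class field theory. So the approach is not wrong; it is simply the content of the cited reference rather than a proof at the level of the present paper.

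There are, however, two genuine issues. First, what you have written is a plan, not a proof: you explicitly flag the treatment of the finitely many exceptional primes as ``l'obstacle principal'' and leave it open. Carrying this out rigorously is exactly the substance of Serre's argument, and it is nontrivial --- so as it stands your text does not establish the proposition. Second, you assert that the required extension ``reste de degr\'e born\'e explicitement en fonction de $g$''. The paper does not claim this for the independence extension, and indeed Serre's result gives a field $\bigcap_\ell K'_\ell$ whose degree over $K$ depends on $A$, not just on $g$; an explicit bound purely in terms of $g$ is not known. You should either drop that claim or justify it separately.
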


\begin{proof}
Suivant \cite{Serre13}, théorème 1, il suffit de remplacer $K$ par son extension finie : $$\bigcap_{\ell \in \mathcal{P}} K_{\ell}',$$ où $K_{\ell}'$ est le corps de rationalité de tous les points de torsion d'ordre premier à $\ell$. Une version moins précise de ce résultat figure dans la Lettre de Serre à Ribet, \cite{Serre86}, 138.
\end{proof}

\subsection{Groupes dérivés}

On considère maintenant l'enveloppe algébrique $\mathcal{H}_{\ell}$ de $G_{\ell}$ dans le groupe algébrique $\mathrm{GL}_{2g}$ sur $\mathbb{Z}_{\ell}$, dont la fibre générique est $H_{\ell}$.  Son groupe dérivé au sens des schémas en groupes réductifs (\cite{Grothendieck70}, Exposé XXII, 6.2) sera noté $\mathcal{S}_{\ell}$. Le principal résultat en direction du Théorème \ref{homothéties serre} est le suivant (\cite{Wintenberger02}, théorème 2).

\begin{theorem}[Wintenberger] \label{wintenberger}
Il existe un entier $\ell_0$ tel que pour tout premier $\ell \geq \ell_0$ : $$\mathcal{S}_{\ell}(\mathbb{Z}_{\ell})' \subset G_{\ell}.$$
\end{theorem}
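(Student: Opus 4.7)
L'approche suit la strat\'egie de Serre, d\'etaill\'ee par Wintenberger, et repose sur trois ingr\'edients : l'ouverture de Bogomolov (Th\'eor\`eme \ref{bogomolov}), une r\'eduction modulo $\ell$ utilisant la structure des groupes semi-simples sur les corps finis, et une \'etape de rel\`evement de $\mathbb{F}_\ell$ vers $\mathbb{Z}_\ell$.

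Le plan est de commencer par se restreindre aux premiers $\ell$ assez grands pour que $\mathcal{H}_\ell$ soit un sch\'ema en groupes r\'eductif sur $\mathbb{Z}_\ell$, et que son sous-sch\'ema d\'eriv\'e $\mathcal{S}_\ell$ soit un sch\'ema en groupes semi-simple de m\^eme type de Dynkin sur la fibre g\'en\'erique et sur la fibre sp\'eciale. Ceci revient \`a exclure un ensemble fini de premiers dits \og mauvais \fg\ pour la donn\'ee radicielle de $H_\ell$. Pour de tels $\ell$, le Th\'eor\`eme \ref{bogomolov} affirme que $G_\ell$ est ouvert dans $H_\ell(\mathbb{Q}_\ell)$, et donc son intersection avec $\mathcal{S}_\ell(\mathbb{Z}_\ell)$ est un sous-groupe ouvert, en particulier Zariski-dense dans $\mathcal{S}_\ell$.

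J'\'etudierais ensuite l'image $\bar{G}_\ell$ de $G_\ell$ dans $\mathrm{GL}_{2g}(\mathbb{F}_\ell)$. Comme $A$ a partout r\'eduction semi-stable, les Frobenius aux places de bonne r\'eduction fournissent un r\'eservoir d'\'el\'ements de $G_\ell$ dont les polyn\^omes caract\'eristiques sont donn\'es par les conjectures de Weil et sont ind\'ependants de $\ell$. Le c\oe{}ur de la preuve consiste \`a montrer que l'image de $G_\ell \cap \mathcal{S}_\ell(\mathbb{Z}_\ell)$ dans $\bar{\mathcal{S}}_\ell(\mathbb{F}_\ell)$ contient le groupe d\'eriv\'e $\bar{\mathcal{S}}_\ell(\mathbb{F}_\ell)'$. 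L'outil principal est la classification de Nori et Larsen--Pink des sous-groupes des groupes semi-simples sur un corps fini : un sous-groupe d'image Zariski-dense est ou bien contenu dans un sous-groupe propre de complexit\'e born\'ee, ou bien provient d'un sous-corps strict de $\mathbb{F}_\ell$, ou bien contient le groupe d\'eriv\'e. Pour $\ell$ plus grand qu'une borne explicite ne d\'ependant que du type de Dynkin, les deux premi\`eres possibilit\'es sont exclues en combinant la forme des polyn\^omes de Frobenius avec le th\'eor\`eme de densit\'e de Chebotarev.

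Il reste alors \`a relever le r\'esultat de $\mathbb{F}_\ell$ \`a $\mathbb{Z}_\ell$. Le noyau de congruence $\ker\bigl(\mathcal{S}_\ell(\mathbb{Z}_\ell) \to \bar{\mathcal{S}}_\ell(\mathbb{F}_\ell)\bigr)$ est un pro-$\ell$-groupe dont les quotients successifs, via l'exponentielle formelle, s'identifient \`a l'alg\`ebre de Lie $\mathrm{Lie}(\mathcal{S}_\ell) \otimes \mathbb{F}_\ell$, qui est parfaite pour $\ell \geq \ell_0$. Un argument standard de filtration entra\^ine alors que tout sous-groupe ferm\'e de $\mathcal{S}_\ell(\mathbb{Z}_\ell)$ se surjectant sur $\bar{\mathcal{S}}_\ell(\mathbb{F}_\ell)'$ contient automatiquement $\mathcal{S}_\ell(\mathbb{Z}_\ell)'$ tout entier. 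Le principal obstacle se situe au niveau de l'\'etape modulo $\ell$ : l'alternative de Nori--Larsen--Pink a un contenu quantitatif non trivial, et exclure effectivement les sous-groupes \og petits \fg\ requiert de relier finement les donn\'ees galoisiennes \`a la structure alg\'ebrique ambiante. C'est pr\'ecis\'ement ce travail d'effectivit\'e qu'entreprend Zywina dans \cite{Zywina19}.
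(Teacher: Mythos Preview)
The paper does not actually prove this theorem: it is quoted from \cite{Wintenberger02}, th\'eor\`eme~2, and the only commentary added is that the argument ``repose en grande partie sur une \'etude de la repr\'esentation modulo~$\ell$'', together with the observation that an effective bound would follow from a precise form of Faltings' theorem asserting the semi-simplicity of $\rho(\ell)$ and the identification of its commutant (this is what the paper subsequently makes explicit via Gaudron--R\'emond). There is thus no detailed proof in the paper to set your sketch against.

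That said, your outline is broadly faithful to the Serre--Wintenberger strategy the paper alludes to, with one substantive omission. You pass from Bogomolov's openness, which gives Zariski-density of $G_\ell \cap \mathcal{S}_\ell(\mathbb{Z}_\ell)$ in the \emph{generic} fibre $\mathcal{S}_\ell \otimes \mathbb{Q}_\ell$, directly to applying Nori/Larsen--Pink on the \emph{special} fibre, as though density automatically survives reduction modulo~$\ell$. It does not in general, and this is exactly the step where one must know that $\rho(\ell)$ is semi-simple with commutant $\mathbb{F}_\ell \otimes \mathrm{End}(A)$, so that the Nori envelope of $G(\ell)$ inside $\mathrm{GL}_{2g,\mathbb{F}_\ell}$ can be identified with the reduction of $\mathcal{H}_\ell$. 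The paper explicitly singles out this Faltings-type input as the decisive (and hardest to quantify) ingredient, and your sketch does not mention it; the appeal to Frobenius polynomials and Chebotarev alone does not suffice to rule out that $G(\ell)$ sits in a proper reductive subgroup of $\bar{\mathcal{H}}_\ell$. Once semi-simplicity is in hand, your Nori/Larsen--Pink step and the pro-$\ell$ lifting through the congruence filtration are indeed the remaining components of Wintenberger's argument.
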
 

\noindent
Ce résultat repose en grande partie sur une étude de la représentation modulo $\ell$, donnée par l'action de $G_K$ sur la $\ell$-torsion : $$\rho(\ell): G_K \rightarrow \mathrm{GL}_{\mathbb{F}_{\ell}}(A[\ell]) \simeq \mathrm{GL}_{2g}(\mathbb{F}_{\ell}),$$ 
dont l'image sera notée $G(\ell)$. Dans sa lettre à Bertrand (\cite{Serre86}, 134), Serre explique qu'on pourrait trouver une borne effective à condition d'obtenir une version précise d'un théorème de Faltings affirmant la semi-simplicité de cette représentation. Wintenberger (\cite{Wintenberger02}, remarque 2.2) fait un inventaire des obstacles à l'effectivité.

\medskip

Dans un travail récent, Zywina utilise la version donnée par Masser et Wüstholz du théorème de Faltings pour estimer partiellement l'entier $\ell_0$. Par des travaux de Serre, le rang $r$ de $H_{\ell}$ est indépendant de $\ell$ et il existe un idéal premier $\mathfrak{p}$ de $\mathcal{O}_K$ tel que le tore de Frobénius $T_{\mathfrak{p}}$, c'est-à-dire le groupe multiplicatif engendré par les racines du polynôme caractéristique du Frobénius en $\mathfrak{p}$, soit de rang $r$ (\cite{Serre86}, 133 ou \cite{Zywina19}, 2.5). On fixe un tel idéal $\mathfrak{p}$ de norme minimale, et on note $h_F(A)$ la hauteur de Faltings de~$A$. 

\begin{theorem}[Zywina]\label{zywina}
Il existe des réels positifs $\alpha$ et $\beta$ dépendant de $g$ tels qu'on puisse choisir : $$\ell_0=\alpha \cdot \max \{ [K : \mathbb{Q}], h_F(A), N(\mathfrak{p}) \}^{\beta}.$$
\end{theorem}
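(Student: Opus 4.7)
Notre plan suit la strat\'egie de \cite{Zywina19}: traiter d'abord la repr\'esentation modulo $\ell$, puis remonter au niveau $\ell$-adique via un argument de type Nori. L'ingr\'edient central est le tore de Frob\'enius $T_{\mathfrak{p}}$, dont la connaissance explicite, via $N(\mathfrak{p})$, permet de contr\^oler effectivement la taille de $G(\ell)$.

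Dans un premier temps, on \'etudie la r\'eduction modulo $\ell$ du tore $T_{\mathfrak{p}}$. Le polyn\^ome caract\'eristique du Frob\'enius en $\mathfrak{p}$ est \`a coefficients dans $\ZZ$, avec des racines de module born\'e par une puissance de $N(\mathfrak{p})$. Le tore $T_{\mathfrak{p}}$ est donc d\'efini sur $\QQ$ par des \'equations explicites en fonction de $N(\mathfrak{p})$, et pour $\ell$ ne divisant pas certains discriminants eux-m\^emes born\'es par une puissance de $N(\mathfrak{p})$, la r\'eduction modulo $\ell$ reste un tore de rang $r$. L'image du Frob\'enius correspondant dans $G(\ell)$ y engendre alors un sous-groupe d'indice born\'e, fournissant un \'el\'ement r\'egulier semi-simple de grand rang et dictant la d\'ependance en $N(\mathfrak{p})$ de $\ell_0$.

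Dans un second temps, on invoque la semi-simplicit\'e de $\rho(\ell)$ pour $\ell$ explicitement grand. Le th\'eor\`eme de Faltings assure cette propri\'et\'e, mais de mani\`ere ineffective; on lui substitue la version quantitative du th\'eor\`eme d'isog\'enie due \`a Masser et W\"ustholz, dont la d\'ependance en $[K:\QQ]$ et $h_F(A)$ est polynomiale --- c'est elle qui g\'en\`ere la d\'ependance en ces deux quantit\'es. Disposant de la semi-simplicit\'e et d'un tore de grand rang dans $G(\ell)$, on applique la classification de Nori des sous-groupes finis de $\mathrm{GL}_{2g}(\FF_\ell)$ pour forcer l'inclusion du groupe d\'eriv\'e de la r\'eduction de $\mathcal{H}_\ell$ modulo $\ell$ dans $G(\ell)$. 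Le passage au niveau $\ell$-adique, combinant la lissit\'e du sch\'ema en groupes $\mathcal{S}_\ell$ et le rel\`evement par l'exponentielle, ne requiert pas d'information quantitative suppl\'ementaire, et fournit l'inclusion $\mathcal{S}_\ell(\ZZ_\ell)' \subset G_\ell$ annonc\'ee par le Th\'eor\`eme \ref{wintenberger}.

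La principale difficult\'e est l'effectivit\'e de la semi-simplicit\'e de Faltings. Les constantes de Masser-W\"ustholz, loin d'\^etre optimales, dictent la croissance des exposants $\alpha$ et $\beta$ et constituent, comme le souligne \cite{Wintenberger02} (remarque 2.2), l'obstacle principal \`a une estimation plus fine; toute am\'elioration directe de ces constantes se r\'epercuterait imm\'ediatement sur la valeur de $\beta$. Un obstacle secondaire, mais plus \'el\'ementaire, est le contr\^ole de la r\'eduction modulo $\ell$ du tore $T_{\mathfrak{p}}$, qui se ram\`ene \`a estimer explicitement les discriminants intervenant dans l'\'etape pr\'ec\'edente.
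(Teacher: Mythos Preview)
Le papier ne d\'emontre pas ce th\'eor\`eme : il est \'enonc\'e comme un r\'esultat de Zywina et renvoie \`a \cite{Zywina19} pour la preuve. Ta proposition n'est donc pas \`a comparer \`a une preuve pr\'esente dans l'article, mais elle restitue fid\`element la strat\'egie que le papier attribue \`a Zywina (tore de Frob\'enius, version effective de Faltings via Masser--W\"ustholz, th\'eorie de Nori, rel\`evement $\ell$-adique).

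Un point m\'erite d'\^etre signal\'e : le papier va au-del\`a de la simple citation en rempla\c{c}ant, dans la section suivante, les constantes de Masser--W\"ustholz par celles de Gaudron--R\'emond (Proposition~\ref{gaudron-r�mond}), ce qui permet d'expliciter l'exposant $\gamma = 2g^3$ pour la d\'ependance en $\max\{[K:\QQ], h_F(A)\}$. Ta remarque finale sur le caract\`ere non optimal des constantes de Masser--W\"ustholz est donc pertinente, et c'est pr\'ecis\'ement l\`a que le papier apporte une contribution propre. En revanche, ta proposition reste un plan et non une preuve : les \'etapes cl\'es (contr\^ole explicite des discriminants du tore de Frob\'enius, application effective de Nori, passage du niveau modulo~$\ell$ au niveau $\ell$-adique) ne sont qu'\'evoqu\'ees, ce qui est coh\'erent avec le fait que le papier lui-m\^eme se contente de citer le r\'esultat.
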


\noindent
{\it Remarque.} Zywina montre qu'il est possible de faire disparaître la dépendance en l'idéal $\mathfrak{p}$ si on admet l'Hypothèse de Riemann généralisée. En utilisant le théorème de Minkowksi et la comparaison entre \og conducteur \fg \, et hauteur de Faltings (\cite{Pazuki16}, théorème 1.1), la borne devient alors : 
$$\ell_0=\alpha \cdot \max \{ \log \Delta_K, h_F(A) \}^{\beta},$$
où $\Delta_K$ est le discriminant absolu du corps $K$ (\cite{Zywina19}, théorème 1.4).

\subsection{Version effective du théorème de Faltings} Il est possible d'aller un peu plus loin en utilisant les travaux de Gaudron et Rémond sur les degrés d'isogénies, qui donnent des informations précises sur la représentation $\rho(\ell)$ modulo $\ell$. On introduit la notation suivante :
$$\Xi(A):= \Big( (7g)^{8g^2} [K: \mathbb{Q}] \max \big\{ 1, h_F(A), \log [K: \mathbb{Q}] \big\} \Big)^{2g^2},$$
qui apparaît de façon répétée dans \cite{Gaudron-Remond20}. 
\begin{proposition}\label{gaudron-rémond}
Si $\ell > g^{2g^2} \Xi(A)^{g}$, la représentation $\rho(\ell)$ est semi-simple de commutant $\mathbb{F}_{\ell} \otimes \mathrm{End}(A)$. 
\end{proposition}
\begin{proof}
Le théorème 1.9 (4) de \cite{Gaudron-Remond20} montre que le commutant de $\rho(\ell)$ est $\mathbb{F}_{\ell} \otimes \mathrm{End}(A)$ si : $$\ell >  \Xi(A).$$

\noindent
On note $d(A)$ la valeur absolue du discriminant de $\mathrm{End}_K(A)$. Par le théorème du (double) commutant, la semi-simplicité de $\rho(\ell)$ découle de celle de l'algèbre $\mathbb{F}_{\ell} \otimes \mathrm{End}(A)$, qui est réalisée si $\ell$ ne divise pas $d(A)$. On peut borner le discriminant en combinant le lemme 2.10 de \cite{Remond18} et le théorème 1.9 (3) de \cite{Gaudron-Remond20} (le rang de End$_K(A)$ est majoré par $2g^2$) :
\[
d(A) \leq g^{2g^2} \mathrm{vol}(\mathrm{End}_K(A))^2 \leq g^{2g^2}\Xi(A)^g.
\]
Pour : $$\ell > g^{2g^2} \Xi(A)^{g},$$ les deux conditions demandées sur $\rho(\ell)$ sont donc vérifiées.
\end{proof} 





\medskip

On peut alors estimer en partie l'exposant qui apparaît dans le Théorème \ref{zywina}.
\begin{corollary}
Il existe des réels positifs $\alpha, \beta$ dépendant de $g$ tels qu'on puisse choisir : $$\ell_0=\alpha \cdot N(\mathfrak{p})^{\beta} \cdot  \max \{ [K : \mathbb{Q}], h_F(A) \}^{2g^3}.$$
\end{corollary}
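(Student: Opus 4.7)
The strategy is to revisit Zywina's proof of Theorem \ref{zywina}, identifying where the effective Faltings semisimplicity theorem (in the form used by Masser--W\"ustholz) intervenes, and replacing that ingredient by the sharper bound of Proposition \ref{gaudron-r�mond}. The latter supplies simultaneously the semisimplicit\'e of $\rho(\ell)$ and the identification of son commutant avec $\FF_\ell \otimes \mathrm{End}(A)$, d\`es que $\ell > g^{2g^2} \Xi(A)^{g}$, with $\Xi(A)$ polynomial of controlled degree in $[K:\QQ]$ et $h_F(A)$.

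D'abord, je d\'ecomposerais le seuil $\ell_0$ de Zywina en deux contributions multiplicatives : un terme d\'ependant de $N(\mathfrak{p})$, qui provient de la d\'etection d'un tore de Frob\'enius de rang maximal $r$ et qui n'est pas am\'elior\'e ici, et un terme d\'ependant de $[K:\QQ]$ et $h_F(A)$, qui provient exclusivement du th\'eor\`eme effectif de Faltings. Seule la seconde contribution doit \^etre remplac\'ee.

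Ensuite, je substituerais Proposition \ref{gaudron-r�mond} \`a l'\'enonc\'e de Masser--W\"ustholz. En posant $M := \max\{[K:\QQ], h_F(A), 1\}$ et en utilisant $\log [K:\QQ] \leq [K:\QQ] \leq M$, le facteur int\'erieur $[K:\QQ] \cdot \max\{1, h_F(A), \log [K:\QQ]\}$ de $\Xi(A)$ est major\'e par un polyn\^ome en $M$ de degr\'e contr\^ol\'e, si bien que $\Xi(A)^{g}$ est major\'e par $\alpha' \cdot M^{2g^{3}}$ apr\`es absorption des constantes d\'ependant seulement de $g$ dans $\alpha'$. On obtient ainsi une majoration du type $\ell_0 \leq \alpha \cdot N(\mathfrak{p})^{\beta} \cdot M^{2g^{3}}$.

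La principale difficult\'e est la comptabilit\'e pr\'ecise des exposants : il faut s'assurer que toutes les occurrences de $[K:\QQ]$ et de $h_F(A)$ provenant des autres \'etapes de l'argument de Zywina (contr\^ole du tore de Frob\'enius, passage du commutant \`a la semi-simplicit\'e, etc.) se laissent absorber soit dans le facteur $N(\mathfrak{p})^{\beta}$, soit dans le facteur $M^{2g^{3}}$, sans d\'egrader l'exposant annonc\'e. Un point technique secondaire est que la d\'ependance en $N(\mathfrak{p})$ ne peut pas \^etre \'elimin\'ee sans admettre l'Hypoth\`ese de Riemann g\'en\'eralis\'ee, comme rappel\'e dans la remarque suivant le Th\'eor\`eme \ref{zywina}.
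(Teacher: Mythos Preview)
Your proposal is essentially identical to the paper's own proof: both follow Zywina's argument, observe that the only constraint on the exponent of $\max\{[K:\mathbb{Q}], h_F(A)\}$ arises from the effective Faltings theorem, and replace that input by Proposition~\ref{gaudron-r�mond} to obtain the exponent $2g^2 \cdot g = 2g^3$. The paper's proof is in fact even terser than your outline and does not spell out the exponent bookkeeping you flag as the main difficulty.
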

\begin{proof}
En suivant la preuve du Théorème \ref{zywina}, on voit que la seule contrainte sur l'exposant $\gamma$ de $\max \{ [K : \mathbb{Q}], h_F(A) \}$ est liée à l'utilisation du théorème de Faltings. La Proposition \ref{gaudron-rémond} montre qu'on peut prendre $\gamma=2g^2 \cdot g=2g^3$.
\end{proof}

\noindent\textit{Remarques.}
Notons que si $A'$ est une variété abélienne $K$-isogène à $A$, les représentations $\ell$-adiques associées à $A$ et $A'$ sont conjuguées dans GL$_{2g}(\mathbb{Q}_{\ell})$. Leurs sous-groupes d'homothéties sont donc identiques. 

\medskip

On peut théoriquement déterminer $\alpha$ et $\beta$ en fonction de $g$. Cela demanderait de rendre effectifs de nombreux résultats profonds de théorie des groupes et de théorie des représentations (dont des théorèmes de Jordan et de Nori), puis de reprendre en détail la preuve du Théorème \ref{zywina}.

\subsection{Homothéties pour les grands premiers}

Lorsque $\ell$ est assez grand, on peut donner une borne assez simple pour la constante de Serre (l'exposant du Théorème \ref{homothéties serre}).

\begin{proposition}
On suppose que $\ell \geq \max \{ \ell_0, \Delta_K + 1\}$. Alors il existe $c \, | (2g)!$ tel que : $$(\mathbb{Z}_{\ell}^{\times})^c \subset G_{\ell}.$$
\end{proposition}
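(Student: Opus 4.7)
On combine le Théorème \ref{wintenberger}, qui fournit l'inclusion $\mathcal{S}_{\ell}(\ZZ_{\ell})' \subset G_{\ell}$ pour $\ell \geq \ell_0$, avec une analyse de l'image de $G_{\ell}$ dans l'abélianisé $T := H_{\ell}/\mathcal{S}_{\ell}$, qui est un tore puisque $H_{\ell}$ est réductif connexe (Lemme 2.1). Quitte à agrandir $\ell_0$ d'un facteur ne dépendant que de $g$ (absorbable dans la borne), on peut supposer que $\mathcal{S}_{\ell}(\ZZ_{\ell})'$ coïncide avec $\mathcal{S}_{\ell}(\ZZ_{\ell})$ : les fibres résiduelles sont alors des groupes parfaits. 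On a donc $\mathcal{S}_{\ell}(\ZZ_{\ell}) \subset G_{\ell}$, et pour $\lambda \in \ZZ_{\ell}^{\times}$, l'élément $\lambda I$ appartient à $G_{\ell}$ si et seulement si $\pi(\lambda I) \in \pi(G_{\ell})$, où $\pi : H_{\ell} \twoheadrightarrow T$ désigne la projection.

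\textbf{Géométrie des homothéties et caractère cyclotomique.} L'inclusion $i : \Gm \hookrightarrow H_{\ell}$ des homothéties induit un morphisme de tores $\pi \circ i : \Gm \to T$ dont l'image est un sous-tore $T_0 \subset T$ de dimension~$1$. Son noyau est contenu dans $\mu_{2g}$ : en effet, le caractère $\det : H_{\ell} \to \Gm$ se factorise via $T$, et sa restriction aux scalaires est l'élévation à la puissance $2g$. Par ailleurs, pour $\ell \geq \Delta_K + 1$, le premier $\ell$ ne divise pas $\Delta_K$ et est donc non ramifié dans $K$. L'intersection $K \cap \QQ(\mu_{\ell^{\infty}})$ est alors à la fois non ramifiée hors de $\ell$ (étant contenue dans $\QQ(\mu_{\ell^\infty})$) et non ramifiée en $\ell$ (étant contenue dans $K$) ; le théorème de Minkowski impose donc $K \cap \QQ(\mu_{\ell^\infty}) = \QQ$. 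Le caractère cyclotomique $\chi_{\ell} : G_K \to \ZZ_{\ell}^{\times}$ est ainsi surjectif, et puisque $\det \circ \rho_{\ell} = \chi_{\ell}^g$, on obtient $\det(G_{\ell}) = (\ZZ_{\ell}^{\times})^g$.

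\textbf{Conclusion et obstacle principal.} Il reste à produire un entier $c$ divisant $(2g)!$ tel que $\pi(\lambda^c I) \in \pi(G_{\ell})$ pour tout $\lambda \in \ZZ_{\ell}^{\times}$, ce qui équivaut à majorer par un diviseur de $(2g)!$ l'indice $[T_0(\ZZ_{\ell}) : T_0(\ZZ_{\ell}) \cap \pi(G_{\ell})]$. C'est l'étape la plus délicate. La majoration combine trois ingrédients : d'abord le degré (divisant $2g$) de l'isogénie $\Gm \to T_0$ établi ci-dessus ; ensuite la contrainte déterminantielle $\det(G_{\ell}) = (\ZZ_{\ell}^{\times})^g$, qui fixe l'image de $\pi(G_{\ell})$ sous le caractère distingué $\det \in X^*(T)$ ; enfin l'analyse de la position de $T_0$ comme image des scalaires dans un tore maximal de $H_{\ell} \subset \mathrm{GL}_{2g}$, dont le groupe de Weyl $S_{2g}$ est d'ordre $(2g)!$ et contrôle l'indice des sous-tores pertinents. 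La contribution des autres caractères de $T$ se ramène, modulo $\pi(G_{\ell})$, à de la torsion dont l'exposant divise $(2g)!$, ce qui fournit le $c$ recherché.
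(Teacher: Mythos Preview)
Your proposal contains two genuine gaps, and the decomposition you attempt is essentially the \emph{reverse} of the paper's.

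\textbf{Perfection of $\mathcal{S}_\ell(\ZZ_\ell)$.} You assert that, after enlarging $\ell_0$ by a factor depending only on $g$, the group $\mathcal{S}_\ell(\ZZ_\ell)$ is perfect. This is not justified: perfection of $G(\FF_\ell)$ for a semisimple $G$ holds when $G$ is simply connected and $\ell$ is large, but $\mathcal{S}_\ell$ need not be simply connected (for groups of adjoint type the $\FF_\ell$-points have nontrivial abelianization for \emph{all} large $\ell$). The paper does not make this claim; on the contrary, the quotient $\mathcal{S}_\ell(\ZZ_\ell)/\mathcal{S}_\ell(\ZZ_\ell)'$ is exactly where the exponent $c$ comes from. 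One identifies $\mathcal{S}_\ell(\ZZ_\ell)'$ with the image of the $\ZZ_\ell$-points of the simply-connected cover $\pi_\ell$, so that an annihilator of $\Ker\pi_\ell(\overline{\QQ}_\ell)$ suffices; since the simple factors of $\mathcal{S}_\ell$ have rank at most $2g-1$, their fundamental groups have order at most $2g$, whence $c \mid \lcm(1,\dots,2g) \mid (2g)!$.

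\textbf{The abelian image.} Your final paragraph does not actually bound the index $[T_0(\ZZ_\ell):T_0(\ZZ_\ell)\cap\pi(G_\ell)]$. The determinant controls a single character of $T$, whereas $T$ can have dimension strictly greater than~$1$, and invoking the Weyl group $S_{2g}$ does not translate into any concrete control of the remaining characters on $\pi(G_\ell)$; the claim that their contribution ``reduces to torsion of exponent dividing $(2g)!$'' is unsupported. In the paper this step uses a deep input: since $A$ has everywhere semi-stable reduction and $\ell>\Delta_K$ forces $K$ to be unramified above~$\ell$, Wintenberger's criterion for abelian Hodge--Tate representations (\cite{Wintenberger02a}, corollaire~2.4) shows that the \emph{entire} image of $\ZZ_\ell^\times$ in $H_\ell(\QQ_\ell)/H_\ell(\QQ_\ell)'$ already lies in $\rho_\ell^{\mathrm{ab}}(G_K)$, with no exponent lost on the abelian side. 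The paper's scheme is thus: full image on the abelian side via Hodge--Tate, and $c \mid (2g)!$ from the fundamental group on the semisimple side --- precisely the inverse of what you attempt, and each of your inverted steps fails.
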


\begin{proof}
On reprend les arguments donnés dans \cite{Wintenberger02}, 2.3. Par une remarque de Deligne (voir \cite{Serre77}, 2, 3), on sait déjà que le groupe des homothéties $\mathbb{Z}_{\ell}^{\times}$ est inclus dans $H_{\ell}(\mathbb{Q}_{\ell})$. Les travaux de Tate aux places de bonne réduction, étendus par Raynaud aux places quelconques (\cite{Bogomolov80}, 2), montrent que la restriction de $\rho_{\ell}$ à tout groupe de décomposition en une place divisant $\ell$ est de Hodge-Tate. Par \cite{Bogomolov80}, 1 (preuve du corollaire), c'est encore vrai pour la représentation abélienne : $$\rho_{\ell}^{\mathrm{ab}} : G_K \longrightarrow H_{\ell}(\mathbb{Q}_{\ell})/H_{\ell}(\mathbb{Q}_{\ell})'.$$ 
Comme $A$ admet réduction semi-stable partout et $K$ est non ramifié au-dessus de $\ell$, on peut appliquer le corollaire 2.4 de \cite{Wintenberger02a} à $\rho_{\ell}^{\mathrm{ab}}$, et on en déduit que $\rho_{\ell}^{\mathrm{ab}} (G_K)$ contient l'image de $\mathbb{Z}_{\ell}^{\times}$ dans $H_{\ell}(\mathbb{Q}_{\ell})/H_{\ell}(\mathbb{Q}_{\ell})'$. Par le Théorème \ref{wintenberger}, si $c$ est un annulateur du quotient $\mathcal{S}_{\ell}(\mathbb{Z}_{\ell}) / \mathcal{S}_{\ell}(\mathbb{Z}_{\ell})'$, on a : $$(\mathbb{Z}_{\ell}^{\times})^c \subset G_{\ell}.$$ Si on note $\pi_{\ell}$ le morphisme de projection associé au revêtement universel de $\mathcal{S}_{\ell}$, on a : $\mathcal{S}_{\ell}(\mathbb{Z}_{\ell})' = \mathrm{Im} \, \pi_{\ell} (\mathbb{Z}_{\ell})$ (\cite{Wintenberger02}, 1.2). Il suffit alors de trouver un entier $c$ qui annule le groupe fondamental de $\mathrm{Ker} \, \pi_{\ell}(\overline{\mathbb{Q}}_{\ell})$, qui est un sous-groupe semi-simple de $\mathrm{GL}_{2g}(\mathbb{C})$. Comme ses facteurs simples sont de rang borné par $2g-1$, ils ont au plus $2g-1$ poids minuscules et leur groupe fondamental est annulé par un entier inférieur ou égal à $2g$. On peut donc prendre pour $c$ le plus petit commun multiple des entiers $\leq 2g$, qui vérifie la condition annoncée. 
\end{proof}

\noindent
{\it Remarques.} On remarque que la constante $c$ donnée par la preuve vérifie : $$\log(c) = \sum_{p \in \mathcal{P}} \Big\lfloor\dfrac{\log(2g)}{\log p}\Big\rfloor
\log p \leq \pi(2g) \log(2g) \leq 3g,$$ où $\pi(2g)$, le nombre de premiers inférieurs ou égaux à $2g$, est majoré suivant \cite{Rosser-Schoenfeld62}, (3.7). On a donc : $c \leq e^{3g}.$

\medskip

 Par le corollaire 2.7.5 de \cite{Wintenberger02a}, si $g \leq 4$ ou si la conjecture de Mumford-Tate est vraie pour $A$, on peut prendre $c=1$ dans la proposition précédente. La conjecture de Lang est donc une conséquence de la conjecture de Mumford-Tate.

\medskip

En faisant la même hypothèse sur $\ell$, il est en fait possible de majorer le quotient $[\mathcal{H}_{\ell}(\mathbb{Z}_{\ell}): G_{\ell}]$, qui est fini par le théorème de Bogomolov. Par \cite{Zywina19}, théorème 1.2 (b), on peut trouver une borne qui dépend uniquement de $g$.

\section{Cohomologie galoisienne sur les corps locaux}

On étudie dans cette partie des groupes de cohomologie galoisienne introduits par Tate dans \cite{Tate67}, \S 3, dont on précise certains résultats en estimant les sauts de ramification d'une extension procyclique. L'étude de ces groupes de cohomologie nous servira dans la partie suivante pour transcrire la structure de Hodge-Tate d'une variété abélienne en une propriété d'algébricité de sa représentation abélienne $\ell$-adique.

\medskip

À l'exception du dernier paragraphe, on se place dans un cadre un peu plus général qui permet d'englober le cas des corps de fonctions en caractéristique positive. On fixe un nombre premier $\ell$ et un corps local $K_v$, complet pour une valuation $v$, dont le corps résiduel est parfait de caractéristique $\ell$, et dont l'indice de ramification est noté $e$. Soit $\bar{K_v}$ la clôture algébrique de $K_v$. La complétion de $\bar{K_v}$ pour la valeur absolue associée à $v$ est notée $C$. C'est un corps algébriquement clos.

\subsection{Préliminaires sur une extension procyclique totalement ramifiée} On fixe désormais une extension galoisienne $K_{\infty}$ de $K_v$ totalement ramifiée telle que : $$\mathscr{C}:= \mathrm{Gal}(K_{\infty}/K_v) \simeq \mathbb{Z}_{\ell},$$ et on se donne un générateur $\sigma$ de $\mathscr{C}$. On peut obtenir une telle extension en considérant un sous-corps bien choisi du corps engendré sur $K_v$ par les racines $\ell^n$-ièmes de l'unité, où $n$ varie dans $\mathbb{N}$. 

Si $n$ est un entier naturel, on note $K_n$ le sous-corps de $K_{\infty}$ correspondant au sous-groupe $\mathscr{C}(n):= \ell^n \mathbb{Z}_{\ell}$ de $\mathscr{C}$. On a : $$G(n):= \mathrm{Gal}(K_n/K_v) \simeq \mathscr{C}/\mathscr{C}(n).$$ Pour $\nu \geq -1$, le groupe de ramification en notation supérieure $G(n)^{\nu}$ est donné par (voir \cite{Serre62}, IV, \S 3, proposition 14, et la remarque qui suit la preuve de cette proposition pour passer  aux extensions profinies) : $$G(n)^{\nu}=\mathscr{C}^{\nu} \mathscr{C}(n)/ \mathscr{C}(n).$$ Soit $(\nu_i)_{i \geq -1}$ la suite définie par : $\mathscr{C}^{\nu}= \mathscr{C}(i)$, pour $\nu_i < \nu \leq \nu_{i+1}$. On a $\nu_0= -1$ et les $(\nu_i)_{i \geq 0}$ sont des entiers positifs par le théorème de Hasse-Arf (voir \cite{Serre62}, V, \S 7, théorème 1, pour le cas des extensions finies auquel on se ramène immédiatement). Pour une extension procyclique, on peut en dire un peu plus sur ces \og sauts de ramification \fg. 

\begin{lemma}
Il existe un entier $\kappa $ tel que pour tout $i \geq \kappa +1$ : $$\nu_{i+1}-\nu_i=e,$$ et $$\max \{ \nu_{\kappa +1}, \ell^{\kappa} + 1 \} \leq \frac{e \ell}{\ell-1}.$$  
\end{lemma}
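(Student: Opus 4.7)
The plan is to pass to the finite cyclic totally ramified quotients $K_n/K_v$ of degree $\ell^n$, then combine Hasse--Arf with a quantitative refinement of Sen's theorem on ramification in $p$-adic Lie extensions. By functoriality of the upper numbering under quotients (Serre, \emph{Corps locaux}, IV \S 3), the sequence $(\nu_i)_{i\ge 1}$ coincides with the sequence of upper ramification jumps of $K_n/K_v$ for any $n\ge i$, and each such $\nu_i$ is a positive integer by Hasse--Arf; in particular the $\nu_i$ are strictly increasing.

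The existence of $\kappa$ with $\nu_{i+1}-\nu_i=e$ for all $i\ge\kappa+1$ is Sen's theorem on ramification in $p$-adic Lie extensions applied to our totally ramified $\mathbb{Z}_\ell$-extension. Its proof goes through the normalized trace $R_n\colon K_n\to K_v$ and the action of $\sigma-1$ on uniformizers $\pi_n$ of $K_n$; the appearance of the slope $e$ reflects the identity $v(\ell)=e$ and the fact that, in the cyclotomic regime, the action of $\sigma-1$ lowers the $\ell$-adic valuation by exactly $e$ at each step.

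For the explicit bound I would work with an Eisenstein polynomial for $K_n/K_v$ and use the lower ramification jumps $b_i^{(n)}$, linked to the upper ones by the Herbrand formula $\nu_{i+1}-\nu_i=(b_{i+1}^{(n)}-b_i^{(n)})/\ell^{i}$, together with the identity $v_n(\sigma^{\ell^{i-1}}(\pi_n)-\pi_n)=b_i^{(n)}+1$. The critical observation is that once the upper jump $\nu_i$ crosses the cyclotomic threshold $e/(\ell-1)$, the action of $\sigma$ on $\pi_n$ is forced into the linear regime of Sen--Tate, where $b_{i+1}^{(n)}-b_i^{(n)}=e\ell^i$ exactly, hence $\nu_{i+1}-\nu_i=e$. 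This pins down $\kappa$ as the index of the last pre-stabilization jump and yields $\nu_{\kappa+1}\le e\ell/(\ell-1)$. The companion inequality $\ell^\kappa+1\le e\ell/(\ell-1)$ then comes from tracking the accumulated lower ramification: the pre-stabilization lower jumps must account for the full wild cyclic quotient $\mathscr{C}/\mathscr{C}(\kappa)\cong\mathbb{Z}/\ell^\kappa\mathbb{Z}$, which after translating back through the Herbrand function produces the factor $\ell^\kappa$.

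The main obstacle is precisely this quantitative refinement: Sen's original argument shows the existence of $\kappa$ but does not pin down the threshold. Making it effective requires carefully tracking, layer by layer, the $\ell$-adic valuation of $\sigma^{\ell^i}(\pi_n)-\pi_n$ and its behaviour under the norm maps $N_{K_{n+1}/K_n}$, as well as handling the usual small-prime complications at $\ell=2$. The threshold $e/(\ell-1)$ is the natural one: below it, various wildly ramified abelian $\ell$-extensions of $K_v$ can coexist, but above it, the only surviving contribution is the cyclotomic one, which locks the jumps into an arithmetic progression of common difference $e$.
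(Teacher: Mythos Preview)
Your proposal identifies the right threshold $e/(\ell-1)$ and the right qualitative picture, but it is a plan rather than a proof: you yourself flag that Sen's theorem gives only the existence of $\kappa$, and that ``making it effective requires carefully tracking, layer by layer'' the valuations of $\sigma^{\ell^i}(\pi_n)-\pi_n$ under the norm maps. None of that tracking is actually carried out, so the two numerical bounds $\nu_{\kappa+1}\le e\ell/(\ell-1)$ and $\ell^{\kappa}+1\le e\ell/(\ell-1)$ remain unproved in your text. The derivation you sketch for the second bound (``the pre-stabilization lower jumps must account for the full wild cyclic quotient $\mathscr{C}/\mathscr{C}(\kappa)$'') is too vague to yield a clean inequality; what one really needs is a recursive lower bound on the $\nu_i$ themselves in the pre-stabilization range.

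The paper bypasses all of this by invoking a ready-made result of Marshall on cyclic $\ell$-extensions of local fields (refined later by Maus and Miki): for a cyclic totally ramified extension of degree $\ell^n$ one has, for every $i\ge 1$,
\[
\nu_{i+1}\ \ge\ \min\{\ell\nu_i,\ \nu_i+e\},
\]
together with the dichotomy $\nu_{\kappa+1}\le e\ell/(\ell-1)$ or $\nu_{\kappa+1}=\min\{\ell\nu_\kappa,\nu_\kappa+e\}$, and the stabilization $\nu_{i+1}-\nu_i=e$ once $\nu_i\ge e/(\ell-1)$. With this in hand the proof is immediate: define $\kappa$ as the largest $i$ with $\nu_i<e/(\ell-1)$; the dichotomy gives $\nu_{\kappa+1}\le e\ell/(\ell-1)$ in both cases; and iterating $\nu_{i+1}\ge \ell\nu_i$ for $i<\kappa$ (valid since $(\ell-1)\nu_i<e$ there) from $\nu_1\ge 1$ gives $\ell^{\kappa-1}\le \nu_\kappa<e/(\ell-1)$, hence $\ell^{\kappa}+1\le e\ell/(\ell-1)$. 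So the single inequality $\nu_{i+1}\ge\min\{\ell\nu_i,\nu_i+e\}$ from Marshall is exactly the missing quantitative input your approach would have to reprove from scratch via Sen--Tate computations with uniformizers.
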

\begin{proof}
Soit $\kappa$ le plus grand entier $i$ pour lequel : $$\nu_{i} < \frac{e}{\ell-1}.$$ Ce nombre est bien défini, car les $(\nu_i)_{i \geq 0}$ sont une suite strictement croissante d'entiers et $\nu_0=-1$. On applique maintenant \cite{Marshall71}, exemple page 280 (qui précise le théorème 6) à $K_n/K_v$ en prenant $n > \kappa +1$. On voit d'abord que pour $\kappa +1 \leq i \leq n$ : $$\nu_{i+1}-\nu_i=e \, ;$$ puis que : 
\begin{center}
$\nu_{\kappa+1} \leq \frac{e \ell}{\ell-1}$ \; ou \; $\nu_{\kappa+1} = \min \{ \ell \nu_{\kappa}, \nu_{\kappa} +e \} = \ell \nu_{\kappa}$, 
\end{center}
ce qui donne dans chaque cas la majoration annoncée pour $\nu_{\kappa + 1}$. De plus, pour tout $1 \leq i \leq n$, $$\nu_{i+1} \geq \min \{ \ell \nu_i, \nu_i + e \}.$$ Pour $i < \kappa$, ceci impose que $\nu_{i+1} \geq \ell \nu_i$, donc $$\ell^{\kappa-1} \leq \nu_{\kappa} < \frac{e}{\ell-1}.$$ Ceci valant pour tout $n > \kappa +1$, le lemme est entièrement démontré.
\end{proof}

\noindent
{\it Remarque.} Les travaux de Maus (\cite{Maus71}) et Miki (\cite{Miki81}) montrent que les conditions utilisées ici caractérisent la suite des sauts de ramification d'une extension cyclique de corps locaux, et que les estimations du lemme sont essentiellement optimales. 

\medskip

La différente relative d'une extension $M/L$ sera notée $\mathfrak{d}_{M/L}$. On peut estimer précisément les valuations de la différente sur la tour d'extensions $(K_n)_{n \in \mathbb{N}}$.
\begin{lemma}
On a : $$v(\mathfrak{d}_{K_n/K_v})=en + c_1 + \ell^{-n}c_2,$$ où $|c_1| \leq 6e^2$ et $|c_2| \leq 2\left(\frac{\ell e}{\ell-1}\right)^2-1 \leq 8e^2$. 
\label{sauts}
\end{lemma}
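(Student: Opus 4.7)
Je partirais de la formule classique donnant la diff\'erente en notation inf\'erieure :
$$v_{K_n}(\mathfrak{d}_{K_n/K_v}) = \sum_{u \geq 0}(|G(n)_u| - 1)$$
(voir \cite{Serre62}, IV, \S 1, proposition 4), avec la normalisation $v_{K_n}(\pi_{K_n}) = 1$. Gr\^ace au lien entre num\'erotations sup\'erieure et inf\'erieure via la fonction de Herbrand -- les sauts inf\'erieurs $u_i = \psi_n(\nu_i)$ v\'erifient $u_{i+1} - u_i = \ell^i(\nu_{i+1} - \nu_i)$ --, un regroupement des termes par paliers fournit
$$v_{K_n}(\mathfrak{d}_{K_n/K_v}) = \sum_{i=0}^{n-1}(\nu_{i+1}-\nu_i)(\ell^n - \ell^i).$$
Comme $K_n/K_v$ est totalement ramifi\'ee de degr\'e $\ell^n$, on a $v = \ell^{-n} v_{K_n}$ sur $K_n$, d'o\`u
$$v(\mathfrak{d}_{K_n/K_v}) = (\nu_n + 1) - \ell^{-n}\sum_{i=0}^{n-1}(\nu_{i+1}-\nu_i)\ell^i.$$

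L'id\'ee cl\'e serait ensuite de scinder ces sommes en $i = \kappa + 1$ et d'appliquer le lemme pr\'ec\'edent. Pour $n \geq \kappa + 1$, la relation $\nu_n = \nu_{\kappa+1} + e(n - \kappa - 1)$ extrait le terme dominant $en$, tandis que la queue g\'eom\'etrique $\sum_{i=\kappa+1}^{n-1} e\ell^i = e(\ell^n - \ell^{\kappa+1})/(\ell-1)$, divis\'ee par $\ell^n$, donne une contribution constante $-e/(\ell-1)$ et un reste en $\ell^{-n}$. En regroupant, on aboutit \`a l'\'egalit\'e annonc\'ee avec
$$c_1 = \nu_{\kappa+1} - e(\kappa+1) + 1 - \frac{e}{\ell-1}, \qquad c_2 = \frac{e\ell^{\kappa+1}}{\ell-1} - \sum_{i=0}^{\kappa}(\nu_{i+1}-\nu_i)\ell^i.$$

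Les majorations explicites d\'ecouleront alors des estimations $\nu_{\kappa+1} \leq e\ell/(\ell-1)$ et $\ell^\kappa \leq e\ell/(\ell-1) - 1$ du lemme pr\'ec\'edent, compl\'et\'ees par $\nu_{i+1} - \nu_i \leq \nu_{\kappa+1}$ pour majorer la somme g\'eom\'etrique dans $c_2$. Le point le plus d\'elicat sera de contr\^oler $e(\kappa + 1)$ dans $c_1$ : ceci requiert une majoration logarithmique de $\kappa$ en fonction de $e$ et $\ell$ (par exemple via $\log_2 x \leq x - 1$ pour $x \geq 1$) afin de rentrer dans $6e^2$. Pour $c_2$, un calcul direct utilisant $A \leq (e\ell/(\ell-1)) \cdot (\ell^{\kappa+1}-1)/(\ell-1)$ donne la forme pr\'ecise $2(e\ell/(\ell-1))^2 - 1$. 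Les petites valeurs $n < \kappa + 1$, en nombre fini puisque $\kappa$ est lui-m\^eme born\'e en fonction de $e$ et $\ell$, seront trait\'ees s\'epar\'ement par une v\'erification directe que les $c_1, c_2$ d\'efinis ci-dessus satisfont encore les bornes annonc\'ees.
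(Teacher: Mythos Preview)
Your approach is essentially the same as the paper's: you arrive at the identical expression $\sum_i (1-\ell^{i-n})(\nu_{i+1}-\nu_i)$ (the paper obtains it by citing Tate's integral formula for the different in upper numbering, while you pass through the lower numbering and the Herbrand function, which is equivalent), then split at $i=\kappa$ and extract the very same constants $c_1, c_2$ with the same estimates from the preceding lemma. Your remark on the range $n<\kappa+1$ is slightly misplaced---since $c_1,c_2$ are fixed constants independent of $n$, there is nothing further to ``verify'' about their bounds; the identity itself is derived (here and in the paper) under the tacit assumption $n\geq\kappa+1$, which is all that is needed downstream.
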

\begin{proof}
Suivant la proposition 5 de \cite{Tate67}, on voit que $|G(n)^{\nu}|=\ell^{n-i}$ si $\nu_i < \nu \leq \nu_{i+1}$ ($0 \leq i \leq n$). De plus :
\begin{eqnarray*}
v(\mathfrak{d}_{K_n/K_v}) & = & \int_{-1}^{\infty} \left(1- \frac{1}{|G(n)^{\nu}|} \right) d\nu \\
 & = & \sum_{i=0}^n(1-\ell^{i-n})(\nu_{i+1}-\nu_i) \\
 & = & \sum_{i=0}^{\kappa}(1-\ell^{i-n})(\nu_{i+1}-\nu_i) + e\sum_{i=\kappa +1}^n (1-\ell^{i-n}) \\
 & = & en - e \kappa + \nu_{\kappa +1}+1 - e \frac{\ell-\ell^{\kappa+1-n}}{\ell-1} - \sum_{i=0}^{\kappa}\ell^{i-n}(\nu_{i+1}-\nu_i) \\
 & = & en + c_1 + \ell^{-n}c_2, 
 \end{eqnarray*}
où $$c_1=-e\kappa + \nu_{\kappa +1} + 1 - \frac{e\ell}{\ell-1},$$ et $$c_2=\frac{e\ell^{\kappa+1}}{\ell-1} - \sum_{i=0}^{\kappa} \ell^i (\nu_{i+1}-\nu_i).$$
On a : $$|c_1| \leq 1 + \frac{2e\ell}{\ell-1} + e^2 \leq 6e^2,$$ et $$|c_2| \leq \left(\frac{e\ell}{\ell-1}\right)^2+ \ell^{\kappa}(1+\nu_{\kappa +1}) \leq 2\left(\frac{\ell}{\ell-1}\right)^2e^2-1 \leq 8e^2,$$
ce qui conclut la preuve.
\end{proof}

\medskip

\begin{corollary}
On a : $$v(\mathfrak{d}_{K_{n+1}/K_n})=e + \ell^{-n}c_3,$$
où $|c_3| \leq \frac{2e^2 \ell}{\ell-1} - 1+\frac{1}{\ell}  \leq 4e^2$.
\end{corollary}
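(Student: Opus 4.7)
L'id�e est d'exploiter la multiplicativit� de la diff�rente dans la tour $K_v \subset K_n \subset K_{n+1}$, et d'appliquer le lemme pr�c�dent pour $n$ et pour $n+1$.

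Plus pr�cis�ment, la formule classique pour la diff�rente d'une tour d'extensions (voir par exemple \cite{Serre62}, III, \S 4, proposition 8) donne :
\[
\mathfrak{d}_{K_{n+1}/K_v} = \mathfrak{d}_{K_{n+1}/K_n} \cdot \bigl(\mathfrak{d}_{K_n/K_v} \mathcal{O}_{K_{n+1}}\bigr).
\]
En appliquant l'unique extension de la valuation $v$ � $\overline{K_v}$ (pour laquelle l'extension d'un id�al � un suranneau ne modifie pas la valuation), on obtient l'identit� additive
\[
v(\mathfrak{d}_{K_{n+1}/K_n}) = v(\mathfrak{d}_{K_{n+1}/K_v}) - v(\mathfrak{d}_{K_n/K_v}).
\]

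Il reste � remarquer que les constantes $c_1$ et $c_2$ apparaissant dans le Lemme \ref{sauts} ne d�pendent pas de $n$ : elles ne font intervenir que l'entier $\kappa$, les sauts $\nu_0, \dots, \nu_{\kappa+1}$, l'indice de ramification $e$ et le premier $\ell$. En utilisant le lemme � l'ordre $n$ et � l'ordre $n+1$, la diff�rence $en$ dispara�t proprement et l'on trouve
\[
v(\mathfrak{d}_{K_{n+1}/K_n}) = e + c_2 \bigl(\ell^{-(n+1)} - \ell^{-n}\bigr) = e + \ell^{-n} \cdot c_3,
\]
avec $c_3 = -c_2 \cdot \tfrac{\ell-1}{\ell}$.

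La majoration de $|c_3|$ s'obtient alors en reportant la borne $|c_2| \leq 2\bigl(\tfrac{\ell e}{\ell-1}\bigr)^2 - 1$ du Lemme \ref{sauts} :
\[
|c_3| \leq \left(\frac{2 \ell^2 e^2}{(\ell-1)^2} - 1\right) \cdot \frac{\ell-1}{\ell} = \frac{2 e^2 \ell}{\ell - 1} - 1 + \frac{1}{\ell}.
\]
La seconde in�galit� $|c_3| \leq 4e^2$ r�sulte simplement de $\tfrac{2\ell}{\ell-1} \leq 4$ pour $\ell \geq 2$. Aucune �tape pr�sente de difficult� r�elle : l'unique point d�licat est de s'assurer que les constantes $c_1$, $c_2$ sont bien les m�mes pour $n$ et $n+1$, ce que l'on v�rifie directement sur leur d�finition dans le Lemme \ref{sauts}.
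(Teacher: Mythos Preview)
Your proof is correct and follows essentially the same approach as the paper: both use the transitivity of the different (\cite{Serre62}, III, \S 4, proposition 8), apply the preceding lemma at levels $n$ and $n+1$, set $c_3 = c_2\,\tfrac{1-\ell}{\ell}$, and deduce the bound on $|c_3|$ from that on $|c_2|$. Your extra remark that $c_1, c_2$ are independent of $n$ is a useful clarification but the argument is otherwise identical.
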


\begin{proof}
Par transitivité de la différente (\cite{Serre62}, III \S 4, proposition 8), on obtient : $$v(\mathfrak{d}_{K_{n+1}/K_n})=e + \ell^{-n}c_2\frac{1-\ell}{\ell}.$$
Posons : $$c_3:=c_2\frac{1-\ell}{\ell}.$$ La majoration de $c_2$ dans le Lemme \ref{sauts} donne : $$|c_3| \leq 2e^2 \frac{\ell}{\ell-1} -\frac{\ell-1}{\ell}  \leq 4e^2,$$
et le corollaire est prouvé.
\end{proof}

\medskip

On peut maintenant évaluer la variation de la valeur absolue $|\cdot|$ associée à $v$ par application de la trace de $K_{n+1}$ à $K_n$, notée Tr$_{K_{n+1}/K_n}$.
\begin{corollary} 
Pour $x \in K_{n+1}$, on a : $$| \mathrm{Tr}_{K_{n+1}/K_n}(x) | \leq \ell^{-e+\ell^{-n}c_4} |x|,$$
où $|c_4| \leq   2e^2 \frac{\ell}{\ell-1} +\frac{1}{\ell} \leq 5e^2$.
\end{corollary}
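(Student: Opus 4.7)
The plan is to exploit the classical duality between the trace and the inverse different: for any finite separable extension $M/L$ of complete discretely valued fields, $\mathfrak{d}_{M/L}^{-1}$ is the largest fractional ideal $I$ of $\mathcal{O}_M$ satisfying $\mathrm{Tr}_{M/L}(I) \subset \mathcal{O}_L$ (\cite{Serre62}, III, \S 3). Applied to $K_{n+1}/K_n$, this gives $v(\mathrm{Tr}_{K_{n+1}/K_n}(y)) \geq 0$ whenever $v(y) \geq -v(\mathfrak{d}_{K_{n+1}/K_n})$, which is the only input I will need from the theory of the different.

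Fix $x \in K_{n+1}^{\times}$. Using the $K_n$-linearity of the trace, I would write $\mathrm{Tr}_{K_{n+1}/K_n}(x) = a \cdot \mathrm{Tr}_{K_{n+1}/K_n}(x/a)$ for a well-chosen $a \in K_n^{\times}$, choosing $v(a)$ to be the largest element of the value group $\ell^{-n}\mathbb{Z}$ of $K_n^{\times}$ such that $x/a \in \mathfrak{d}_{K_{n+1}/K_n}^{-1}$, i.e.\ subject to $v(a) \leq v(x) + v(\mathfrak{d}_{K_{n+1}/K_n})$. Since both $v(x)$ and $v(\mathfrak{d}_{K_{n+1}/K_n})$ lie in the finer value group $\ell^{-n-1}\mathbb{Z}$ of $K_{n+1}^{\times}$, rounding down to the coarser group $\ell^{-n}\mathbb{Z}$ costs at most $(\ell-1)/\ell^{n+1}$. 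This gives the key intermediate bound
\[ v(\mathrm{Tr}_{K_{n+1}/K_n}(x)) \;\geq\; v(a) \;\geq\; v(x) + v(\mathfrak{d}_{K_{n+1}/K_n}) - \frac{\ell-1}{\ell^{n+1}}. \]

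Substituting $v(\mathfrak{d}_{K_{n+1}/K_n}) = e + \ell^{-n}c_3$ from the previous corollary and rearranging yields $v(\mathrm{Tr}_{K_{n+1}/K_n}(x)) \geq v(x) + e - \ell^{-n} c_4$ with $c_4 := (\ell-1)/\ell - c_3$; passing to the associated absolute value $|\cdot| = \ell^{-v(\cdot)}$ then produces the stated inequality. For the numerical bound, the triangle inequality combined with the bound $|c_3| \leq 2e^2\ell/(\ell-1) - 1 + 1/\ell$ of the previous corollary gives
\[ |c_4| \;\leq\; \frac{\ell-1}{\ell} + |c_3| \;\leq\; \frac{2e^2\ell}{\ell-1} \;\leq\; \frac{2e^2\ell}{\ell-1} + \frac{1}{\ell} \;\leq\; 5e^2, \]
as desired. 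There is no real obstacle here; the only care required is tracking precisely which value groups are involved, so that the floor loss emerges with exactly the right denominator $\ell^{n+1}$ and the dominant term $e$ appears without any spurious factors.
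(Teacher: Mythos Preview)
Your proof is correct and follows essentially the same path as the paper: both convert the previous corollary's value $v(\mathfrak{d}_{K_{n+1}/K_n})=e+\ell^{-n}c_3$ into a lower bound on $v(\mathrm{Tr}_{K_{n+1}/K_n}(x))-v(x)$, with a small rounding loss coming from the passage between the value groups of $K_{n+1}$ and $K_n$. The only cosmetic difference is that the paper invokes the sharper formula $\mathrm{Tr}(\mathfrak{m}_{n+1}^{\,i})=\mathfrak{m}_n^{\lfloor (i+d)/\ell\rfloor}$ (\cite{Serre62}, V~\S3, lemme~4) and bounds the floor loss by~$1$, setting $c_4=c_3-1$, whereas you use only the defining property of $\mathfrak{d}^{-1}$ and track the loss as $(\ell-1)/\ell$, giving $c_4=(\ell-1)/\ell-c_3$; both constants satisfy the stated bound.
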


\begin{proof}
Soit $\mathfrak{m}_n$ l'idéal maximal de l'anneau des entiers $R_n$ de $K_n$. Par $K_v$-linéarité de la trace, on peut supposer que $x \in R_{n+1}$ quitte à le multiplier par une puissance de $\ell$ suffisamment grande. On pose $\mathfrak{d}_{K_{n+1}/K_n}=  \mathfrak{m}_{n+1}^d$.  Par \cite{Serre62}, V \S 3, lemme 4 : $$\mathrm{Tr}_{K_{n+1}/K_n}(\mathfrak{m}_{n+1}^i)=\mathfrak{m}_n^j,$$ avec $j=\big[\frac{i+d}{\ell}\big]$. Comme $K_n$ est totalement ramifiée sur $K$ de degré $\ell^n$, on en déduit : 
\begin{eqnarray*}
v( \mathrm{Tr}_{K_{n+1}/K_n}(x)) & = & \ell^{-n}\big[\ell^n(v(x)+v(\mathfrak{d}_{K_{n+1}/K_n}))\big] \\
& = & e + \ell^{-n} [\ell^n v(x) + c_3] \\
& > & e + v(x) + \ell^{-n}(c_3-1).
\end{eqnarray*}
Le corollaire s'en déduit en passant à la valeur absolue et en posant $c_4=c_3-1$.
\end{proof}

\noindent
{\it Remarque.} Le lemme 3 de \cite{Serre62}, V \S 3 donne une expression assez simple pour $d$, mais qui nécessite de connaître le saut de ramification dans $K_{n+1}/K_n$.

\medskip

On définit une fonction $K_v$-linéaire $t$ sur $K_{\infty}$ par : $$t(x):= \ell^{-n} \mathrm{Tr}_{K_n/K_v}(x),$$
si $x \in K_n$. Cette formule ne dépend pas du choix de $K_n$ contenant $x$, par les propriétés élémentaires de la trace et car $[K_n/K_v]=\ell^n$. Rappelons qu'on a aussi fixé un générateur $\sigma$ du groupe $\mathscr{C}$. Si $x \in K_{n+1}$, par \cite{Tate67}, lemme 2 (et la remarque à la fin de la preuve du lemme): 
\begin{eqnarray}
\label{trace}
\big| x-\ell^{-1} \mathrm{Tr}_{K_{n+1}/K_n}(x) \big| \leq \ell^e \big|\sigma^{\ell^n}(x)-x \big| =  \ell^e \big|\sigma(x)-x \big|,
\end{eqnarray}
la dernière inégalité résultant de la formule du binôme (quitte à supposer $x$ entier par linéarité).
\medskip

Ceci va nous permettre de démontrer l'inégalité suivante.
\begin{proposition}
\label{operateur}
Pour tout $x \in K_{\infty}$ : $$|x-t(x)| \leq \ell^{c_5} |x- \sigma(x)|,$$ où $|c_5| \leq \frac{2\ell}{(\ell-1)^2}e^2+e+ \frac{1}{\ell(\ell-1)} \leq 6e^2$.
\end{proposition}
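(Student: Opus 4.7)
L'id�e est de d�composer $x - t(x)$ le long d'une sous-tour finie de $K_\infty$, d'appliquer it�rativement l'in�galit� \eqref{trace} � chaque �tage, et de contr�ler l'accumulation des constantes gr�ce au Corollaire pr�c�dent.

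\emph{Premi�re �tape : t�lescopage.} Pour $x \in K_n$ fix� (le cas $n=0$ �tant trivial), on pose $y_k := \ell^{-(n-k)} \mathrm{Tr}_{K_n/K_k}(x) \in K_k$ pour $0 \leq k \leq n$, de sorte que $y_n = x$ et $y_0 = t(x)$. La transitivit� de la trace donne la relation cl� $y_k = \ell^{-1} \mathrm{Tr}_{K_{k+1}/K_k}(y_{k+1})$, et l'ultram�tricit� ram�ne l'estimation � borner :
$$|x - t(x)| \leq \max_{0 \leq k < n} |y_{k+1} - y_k|.$$

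\emph{Deuxi�me �tape : application de \eqref{trace} et commutation � la trace.} Appliqu�e � $y_{k+1} \in K_{k+1}$, l'in�galit� \eqref{trace} fournit $|y_{k+1} - y_k| \leq \ell^e |\sigma(y_{k+1}) - y_{k+1}|$. Comme le groupe $\mathrm{Gal}(K_n/K_v) \simeq \mathscr{C}/\mathscr{C}(n)$ est ab�lien, $\sigma$ commute � la trace $\mathrm{Tr}_{K_n/K_{k+1}}$, et on obtient $\sigma(y_{k+1}) - y_{k+1} = \ell^{-(n-k-1)} \mathrm{Tr}_{K_n/K_{k+1}}(\sigma(x) - x)$. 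On it�re ensuite le Corollaire pr�c�dent sur les $(n-k-1)$ �tages $K_{m+1}/K_m$, $m = k+1, \ldots, n-1$ : les facteurs $\ell^{-e}$ ainsi produits compensent exactement le pr�facteur $\ell^{-(n-k-1)}$ (dont le module vaut $\ell^{e(n-k-1)}$), et seule persiste une somme $E_k := \sum_{m=k+1}^{n-1} \ell^{-m} c_4^{(m)}$. La d�croissance g�om�trique majore $|E_k|$ par $|c_4|/(\ell-1) \leq 2e^2\ell/(\ell-1)^2 + 1/(\ell(\ell-1))$, ind�pendamment de $n$ et $k$. En posant $c_5 := e + |c_4|/(\ell-1)$, on obtient la majoration annonc�e.

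La principale subtilit� r�side dans l'usage r�p�t� de l'ultram�tricit� : d'abord pour remplacer la somme t�lescopique $\sum_k (y_{k+1} - y_k)$ par son maximum --- les $n$ facteurs $\ell^e$ d'une estimation archim�dienne d�truiraient toute uniformit� en $n$ ---, puis pour borner la s�rie g�om�trique des corrections $\ell^{-m} c_4^{(m)}$. Cette derni�re �tape n'est possible que gr�ce � la pr�cision du Corollaire pr�c�dent, o� la constante $c_4^{(m)}$ appara�t multipli�e par $\ell^{-m}$ ; une majoration moins fine (par exemple avec un terme d'ordre 1 en $\ell^{-m}$ remplac� par une constante) interdirait toute conclusion uniforme.
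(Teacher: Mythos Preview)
Your proof is correct and takes essentially the same approach as the paper's: both combine the inequality (\ref{trace}) with the trace-bound Corollary along the tower $(K_k)_{0\le k\le n}$, arriving at the identical constant $c_5 = e + |c_4|/(\ell-1)$. The paper organizes this as an induction on $n$ (splitting off the top layer $K_{n+1}/K_n$ at each step), whereas you unroll that induction into an explicit telescoping sum; note incidentally that your $c_4^{(m)}$ is in fact the single constant $c_4 = c_3 - 1$, independent of $m$.
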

\begin{proof}
Prouvons par récurrence sur $n \geq 1$ que pour $x \in K_n$: $$|x-t(x)| \leq \ell^{u_n} |x- \sigma(x)|,$$
avec $u_1=e$ et si $n \geq 2$ : $$u_n= e + |c_4| \sum_{k=1}^{n-1} \ell^{-k}.$$ Pour $n=1$, c'est une conséquence immédiate de (\ref{trace}). Supposons le résultat montré au rang $n \geq 1$ et prenons $x \in K_{n+1}$. On applique l'hypothèse de récurrence à $y=$ Tr$_{K_{n+1}/K_n}(x)$ : 
\begin{eqnarray*}
|y-t(y)| & \leq & \ell^{u_n} |y- \sigma(y)| \\
 & \leq & \ell^{u_n} |\mathrm{Tr}_{K_{n+1}/K_n}(x-\sigma(x) | \\
 & \leq & \ell^{u_n - e + \ell^{-n}c_4} |x-\sigma(x)|.
\end{eqnarray*}
Par les propriétés de la trace et en appliquant (\ref{trace}), on en déduit que :
\begin{eqnarray*}
|x-t(x)| & = & |x-\ell^{-1}y + \ell^{-1}y -\ell^{-1}t(y)| \\
 & \leq & \max \{ |x-\ell^{-1}y|, |\ell^{-1}y -\ell^{-1}t(y)| \} \\
 & \leq & \max \{\ell^e, \ell^{u_n + \ell^{-n}c_4} \} |x-\sigma(x)| \\
 & \leq & \ell^{u_{n+1}} |x-\sigma(x)|.
 \end{eqnarray*}
 Comme pour tout $n$ : $$u_n \leq e + \frac{|c_4|}{(\ell-1} \leq \frac{2\ell}{(\ell-1)^2}e^2+e+ \frac{1}{\ell(\ell-1)} \leq 6e^2,$$ la proposition suit.
\end{proof}

\noindent
{\it Remarque.} Si on remplace le corps de base $K_v$ par $K_n$, où $n \geq 1$, la preuve peut être adaptée {\it mutatis mutandis} et on obtient exactement la même inégalité.

\subsection{Condition d'annulation de groupes de cohomologie}

\label{cohomologie}

Si $A$ est un $K_v$-espace vectoriel normé muni d'une action continue de $\mathscr{C}$, on peut définir les groupes de cohomologie $H^i(\mathscr{C}, A)$, où $i \geq 0$, pour les cochaînes continues. Le groupe $H^0(\mathscr{C}, A)$ est constitué des éléments de $A$ fixés par $\mathscr{C}$ ; le groupe $H^1(\mathscr{C}, A)$ est le quotient du groupe des {\it cocyles} continus de $\mathscr{C}$ dans $A$, c'est-à-dire des $\chi$ tels que : $$\forall \tau, \tau' \in \mathscr{C}: \; \; \chi(\tau \circ \tau')= \tau \cdot \chi(\tau') + \chi(\tau),$$ par son sous-groupe des {\it cobords}, c'est-à-dire des $\chi'$ tels qu'il existe $a \in A$ vérifiant  : $$\forall \tau \in \mathscr{C}: \; \; \chi'(\tau)= \tau \cdot a - a.$$
On renvoie à \cite{Serre62}, VII~\S3 pour plus de détails. 

\medskip

On considère maintenant la complétion $X$ de $K_{\infty}$ pour la valeur absolue associée à $v$. C'est un $K_v$-espace de Banach sur lequel $\mathscr{C}$ agit continûment. 
Dans ce cas, la structure des deux premiers groupes de cohomologie est clarifiée par les travaux de Tate (\cite{Tate67}, proposition 8).
\begin{lemma}
\label{cohomologie1}
On a $H^0(\mathscr{C}, X)= K_v$, et $H^1(\mathscr{C}, X)$ est un $K_v$-espace vectoriel de dimension $1$.
\end{lemma}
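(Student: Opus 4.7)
The plan is to use the operator $t$ and the bound from Proposition \ref{operateur} as the core tool, following Tate's original strategy in \cite{Tate67}.

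First I would extend $t$ to a continuous $K_v$-linear endomorphism of $X$. From Proposition \ref{operateur} applied with $|x - \sigma(x)| \leq |x|$, one obtains $|t(x)| \leq \max(1, \ell^{c_5}) |x|$ on $K_\infty$, so $t$ extends by continuity to $X \to X$. Since $t(K_n) \subset K_v$ and $K_v$ is closed in $X$, in fact $t(X) \subset K_v$; and since $t$ restricts to the identity on $K_v$ (the case $n = 0$), $t$ is a continuous $K_v$-linear projector of $X$ onto $K_v$. Finally, Galois-invariance of the trace gives $t \circ \sigma = t$ on $K_\infty$, hence on $X$ by continuity.

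For $H^0$: if $x \in X$ is fixed by $\mathscr{C}$, then $\sigma(x) = x$, so by the continuous extension of Proposition \ref{operateur} one has $|x - t(x)| \leq \ell^{c_5}|x - \sigma(x)| = 0$, whence $x = t(x) \in K_v$. The reverse inclusion $K_v \subset X^{\mathscr{C}}$ is obvious.

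For $H^1$: since $\mathscr{C} \simeq \mathbb{Z}_\ell$ is topologically generated by $\sigma$, any continuous cocycle $\chi$ is determined by $a := \chi(\sigma) \in X$, and conversely the cocycle condition forces $\chi(\sigma^n) = (1 + \sigma + \dots + \sigma^{n-1})a$; continuity holds automatically because on $K_m \subset K_\infty$, $\sum_{k=0}^{\ell^n - 1} \sigma^k$ acts as $\ell^{n-m} \mathrm{Tr}_{K_m/K_v}$ for $n \geq m$, which tends to $0$ (and the full statement follows by density). The coboundaries correspond to $a \in (\sigma - 1)X$, so $H^1(\mathscr{C}, X) \simeq X/(\sigma - 1)X$. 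I would then prove $(\sigma - 1)X = \ker(t)$: the inclusion $\subset$ follows from $t \circ \sigma = t$. For the reverse inclusion, given $x \in \ker(t) \cap K_\infty$, say $x \in K_n$, one has $\mathrm{Tr}_{K_n/K_v}(x) = 0$, and on the finite Galois extension $K_n/K_v$ the operators $\sigma - 1$ and $\mathrm{Tr}_{K_n/K_v}$ have the same rank, so $x \in (\sigma - 1)K_n$. Moreover one can pick a preimage $y \in K_n$ with $t(y) = 0$ by subtracting $t(y) \in K_v = \ker(\sigma - 1|_{K_n})$, and then Proposition \ref{operateur} yields $|y| = |y - t(y)| \leq \ell^{c_5}|x|$.

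The main obstacle — and the step where the inequality from Proposition \ref{operateur} really pays off — is promoting this finite-level solvability to all of $\ker(t) \subset X$. Given $x \in \ker(t)$, approximate it by a sequence $(x_n) \subset K_\infty$, and replace $x_n$ by $x_n - t(x_n) \in \ker(t) \cap K_\infty$ (this still converges to $x$ since $t$ is continuous and $t(x) = 0$). For each $n$, pick $y_n \in K_\infty$ with $(\sigma - 1) y_n = x_n$ and $t(y_n) = 0$; linearity gives $(\sigma - 1)(y_n - y_m) = x_n - x_m$ with $t(y_n - y_m) = 0$, whence $|y_n - y_m| \leq \ell^{c_5}|x_n - x_m|$ by Proposition \ref{operateur}. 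Thus $(y_n)$ is Cauchy in $X$, converges to some $y$, and $(\sigma - 1)y = x$ by continuity. Therefore $t$ induces a $K_v$-linear isomorphism $X/(\sigma - 1)X \xrightarrow{\sim} K_v$, proving that $H^1(\mathscr{C}, X)$ is one-dimensional over $K_v$.
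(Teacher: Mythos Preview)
Your proof is correct and follows exactly Tate's argument (propositions~7--8 of \cite{Tate67}), which is precisely what the paper cites rather than reproves. In fact the decomposition $X = K_v \oplus \ker(t)$ and the bounded inverse of $\sigma - \mathrm{Id}$ on $\ker(t)$ that you establish are exactly the ingredients the paper invokes verbatim at the opening of the proof of Proposition~\ref{cohomologie2}.
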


Soit $\chi$ un caractère continu de $\mathscr{C}$ à valeurs dans $K_v^*$. On note $X(\chi)$ le $\mathscr{C}$-module $X$ pour l'action \og tordue \fg : 
\begin{eqnarray*}\label{action-tordue}
\forall x \in X, \tau \in \mathscr{C} : \; \; \tau\cdot x:= \chi(\tau)\tau(x).
\end{eqnarray*}
Lorsque l'image du caractère $\chi$ n'est pas trop petite dans un sens qu'on va désormais préciser, l'action qu'il induit annule les groupes de cohomologie. Soit : $$c_6:=2+\bigg[ \frac{1}{e\ell(\ell-1)}+\frac{2\ell e}{(\ell-1)^2} + \frac{\log(2e)}{\log \ell}\bigg] \leq 7e.$$
\begin{proposition}
\label{cohomologie2}
Si $\chi^{\ell^{c_6}} \neq 1$, on a $H^0(\mathscr{C}, X(\chi))= 0$ et $H^1(\mathscr{C}, X(\chi))=0$.
\end{proposition}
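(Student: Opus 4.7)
The key preliminary is that the hypothesis $\chi^{\ell^{c_6}} \neq 1$ forces $\chi$ to have infinite order as a character of $\mathscr{C}$. Indeed, if $\chi$ had finite order $\ell^k$, then $\chi(\sigma) \in K_v^*$ would be a primitive $\ell^k$-th root of unity, so $\mathbb{Q}_\ell(\zeta_{\ell^k}) \subseteq K_v$; the total ramification of this cyclotomic extension forces $\ell^{k-1}(\ell-1) \leq e$, hence $k \leq 1 + \log_\ell(e/(\ell-1))$, which is strictly less than $c_6$ by direct inspection. In particular $\chi(\sigma) \in 1 + \mathfrak{m}_v$ is not a root of unity.

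For $H^0$, I would extend $t = t_0$ to a family of $K_v$-linear projections $t_n \colon X \to K_n$ defined by $t_n(y) := \ell^{-(m-n)} \mathrm{Tr}_{K_m/K_n}(y)$ for $y \in K_m$, $m \geq n$; each $t_n$ commutes with $\sigma$ and satisfies $t_n(x) \to x$ in $X$. If $x \in X$ satisfies $\chi(\sigma)\sigma(x) = x$, then $\sigma(t_n(x)) = \chi(\sigma)^{-1} t_n(x)$ for every $n$. Since $\sigma$ acts on $K_n$ through the finite cyclic quotient $\mathscr{C}/\mathscr{C}(n)$ of order $\ell^n$, its eigenvalues on $K_n \otimes_{K_v} \bar{K_v}$ are $\ell^n$-th roots of unity; as $\chi(\sigma)^{-1}$ is not a root of unity by the preliminary, $t_n(x) = 0$ for every $n$, and therefore $x = 0$.

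For $H^1$, a continuous cocycle is determined by $y := f(\sigma) \in X$, so the task reduces to solving $T(a) := \chi(\sigma)\sigma(a) - a = y$ for some $a \in X$. Using the $T$-stable decomposition of $X$ as the completion of $K_v \oplus \bigoplus_{n \geq 1} V_n$, where $V_n := (t_n - t_{n-1})(X) \subseteq K_n$, I would solve the equation piece by piece: on $K_v$, $T$ is multiplication by $\chi(\sigma) - 1 \neq 0$; on $V_n$, the $\sigma$-eigenvalues over $\bar{K_v}$ are precisely the primitive $\ell^n$-th roots of unity, so those of $T$ are $\chi(\sigma)\zeta - 1$, all nonzero by the preliminary, whence $T|_{V_n}$ is invertible with an explicit bound on its inverse coming from the base-change version of Proposition \ref{operateur} for the tower $K_\infty/K_{n-1}$ (as indicated in the remark following the proposition). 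The main technical difficulty is the uniform control of $|T^{-1}|_{V_n}$: after base change to $K_{n-1}$, the analogue of the constant $c_5$ grows with the ramification $e \ell^{n-1}$, while simultaneously the primitive $\ell^n$-th roots of unity accumulate near $1$ in $\bar{K_v}$. The explicit value of $c_6$ is precisely calibrated so that $|\chi(\sigma) \zeta - 1|$ remains bounded below by a sufficient margin at every level, ensuring the convergence in $X$ of the series assembling $T^{-1}(y)$.
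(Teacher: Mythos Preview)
Your preliminary and your treatment of $H^0$ are correct; the paper obtains $H^0=0$ differently, as a byproduct of showing that the single operator $\sigma'=\sigma-\lambda$ (with $\lambda=\chi(\sigma)^{-1}$) is bijective on all of $X$.

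For $H^1$, however, there is a genuine gap. First, your reading of the remark after Proposition~\ref{operateur} is inverted: its whole point is that the constant $c_5$ does \emph{not} change under base change from $K_v$ to $K_n$, because the differents $\mathfrak{d}_{K_{m+1}/K_m}$ for $m\ge n$ are the same objects as before and are still governed by the original $e$. So the tension you describe, between a growing $c_5$ and shrinking eigenvalues, is not the real obstruction. Second, knowing that the eigenvalues $\chi(\sigma)\zeta-1$ of $T$ on $V_n\otimes\bar K_v$ are nonzero gives invertibility of $T|_{V_n}$, but it does not give an operator-norm bound on $(T|_{V_n})^{-1}$ over $K_v$, and certainly not one uniform in $n$; in the equal-characteristic case $\sigma$ is not even diagonalisable on $V_n$ since $(\sigma-1)^{\ell^n}=0$ there. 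The base-changed Proposition~\ref{operateur} bounds the inverse of $\sigma^{\ell^{n-1}}-1$, not of $\sigma-\lambda$, so the link you assert is missing. Finally, you never produce the inequality in which $c_6$ actually intervenes; as it stands, your argument would run identically for any $\chi$ with $\chi(\sigma)$ not a root of unity, which cannot be right.

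The paper avoids the level-by-level decomposition. Keeping the coarse splitting $X=K_{c_6}\oplus X_0$ (after one base change, where the same $c_5$ applies), it compares $\sigma^{\ell^{c_6}}-\lambda^{\ell^{c_6}}$ with the already-inverted operator $\sigma^{\ell^{c_6}}-1$: the decisive computation is $v(\lambda^{\ell^{c_6}}-1)=w_{c_6}>c_5$, obtained from the elementary recursion $w_{i+1}\ge\min\{e+w_i,\ell w_i\}$. Hence $\|(\lambda^{\ell^{c_6}}-1)\rho\|<1$ and a single Neumann series makes $\sigma^{\ell^{c_6}}-\lambda^{\ell^{c_6}}$ bijective on $X_0$; bijectivity on $K_{c_6}$ is immediate from $\lambda^{\ell^{c_6}}\ne1$. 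One then factors $\sigma^{\ell^{c_6}}-\lambda^{\ell^{c_6}}=\sigma'\tau$ with $\tau\in K_v[\sigma]$ to conclude that $\sigma'$ itself is bijective on $X$. This passage to $\ell^{c_6}$-th powers, together with the estimate $w_{c_6}>c_5$, is precisely the idea your proposal is missing.
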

\begin{proof}
On reprend les résultats de la proposition 7 de \cite{Tate67} en les précisant. La fonction $K_v$-linéaire $t$ définie sur $K_{\infty}$ se prolonge à $X$ par continuité. Si on note $X_0$ son noyau, l'espace $X$ est la somme directe de $X_0$ et de $K_v$. D'autre part, l'opérateur $\sigma - \mathrm{Id}$ admet un inverse $K_v$-linéaire et continu $\rho$ sur $X_0$ tel que : $$\forall y \in X_0: \; |\rho (y)| \leq \ell^{c_5} |y|.$$

\noindent
Soit $\lambda:= \chi(\sigma)^{-1}$. On vérifie immédiatement que le groupe $H^0(\mathscr{C}, X(\chi))$ est le noyau de $\sigma':= \sigma - \lambda \mathrm{Id}$ sur $X$, et que $H^1(\mathscr{C}, X(\chi))$ est un sous-groupe du conoyau de $\sigma'$. Il suffit donc de démontrer que $\sigma'$ est bijectif sur $X$. Comme $\lambda \neq 1$, la bijectivité de $\sigma'$ sur $K_v$ est immédiate et il suffit donc d'étudier $\sigma'$ sur $X_0$.

Par continuité de $\chi$, on observe que : $$|\lambda^{-\ell^n}-1| \longrightarrow 0,$$ donc $|\lambda^{-1}-1| <1$, et $|\lambda-1|<1$. Pour $i \geq 0$, soit $w_i:= v \big(\lambda^{\ell^i}-1\big)$. On a donc $w_0 \geq 1$. Par la formule du binôme et les propriétés usuelles des coefficients binomiaux, on obtient : $$\forall i \geq 0: \; w_{i+1} \geq \min \{ e + w_i , \ell w_i \},$$ et on a une égalité dès que $e+w_i \neq \ell w_i$. Soit $\kappa$ le plus petit entier $i \geq 0$ pour lequel $w_i > e/(\ell-1)$. Cet entier existe car la suite des entiers $(w_i)_i$ est strictement croissante. Remarquons que $\ell^{\kappa} \leq \ell e/(\ell-1)$, ce qui donne : $$\kappa \leq \bigg[ \frac{\log(2e)}{\log \ell }\bigg].$$ On en déduit que $c_6 > \kappa$. Il vient : 
\begin{eqnarray*}
w_{c_6} & \geq & \ell^{\kappa} +(c_6- \kappa)e \\
& \geq & \frac{e}{\ell-1}+(c_6- \kappa)e \\
& \geq & \frac{e}{\ell-1} + e +  \frac{1}{\ell(\ell-1)}+\frac{2\ell e^2}{(\ell-1)^2} \\
& > & |c_5|.
\end{eqnarray*}
Par hypothèse, $\lambda^{\ell^{c_6}} \neq 1$. Si $y \in X_0$, on a donc : $$\big|(\lambda^{\ell^{c_6}}-1) \rho(y)\big| \leq \ell^{c_5-w_{c_6}} |y| < |y|.$$ 
Si on prend $K_{c_6}$ comme corps de base au lieu de $K_v$, ces résultats restent vrais par la remarque suivant la preuve de la Proposition \ref{operateur}. Par linéarité de $\rho$, on obtient : $$\rho(\sigma^{\ell^{c_6}}-\lambda^{\ell^{c_6}})=1-(\lambda^{\ell^{c_6}}-1)\rho,$$
qui est donc inversible. On en déduit que $\sigma^{\ell^{c_6}}-\lambda^{\ell^{c_6}}$ est inversible sur $X_0$. Mais comme $\sigma^{\ell^{c_6}}-\lambda^{\ell^{c_6}}= \sigma' \tau$, pour un certain $\tau \in K_v[\sigma]$, l'application $\sigma'$ est également bijective.
\end{proof}

\noindent
{\it Remarques.} Si on suppose que $\ell \geq 4e^2$, on voit d'abord que $e/(\ell-1) <1$, donc $\kappa=0$. Mais on a également : $$|c_5| \leq e + \frac{1}{\ell^2-\ell} + \frac{1}{(\ell-1)^2}< 1+e.$$ Comme $w_1 \geq 1+e$, on peut alors choisir $c_6=1$.

\medskip

Si $K_v$ est de caractéristique $\ell$, on note que la borne obtenue dépend de l'indice de ramification, et pas des racines de l'unité contenues dans $K_v^*$ (liées à son corps des constantes), qui apparaissent naturellement dans l'étude de l'image de $\chi$. 

\subsection{Logarithme des caractères admissibles}

Les calculs qu'on vient d'effectuer sur une extension procyclique totalement ramifiée ont des conséquences importantes pour la théorie des modules de Hodge-Tate. On suppose dans ce paragraphe que $K_v$ est de caractéristique nulle. On note $\mathscr{G}:= \mathrm{Gal}(\bar{K_v}/K_v)$, $\mathscr{I}$ son sous-groupe d'inertie, et on fixe un caractère continu $\chi : \mathscr{G} \rightarrow K_v^*$. 
\begin{definition}
Le caractère $\chi$ est dit admissible (noté $\chi \sim 1$) s'il existe $x \in C^*$ tel que : $$\forall \sigma \in \mathscr{G} : \; \sigma(x)= \chi(\sigma)x.$$
\end{definition}

\noindent
On peut alors définir une relation d'équivalence entre deux caractères $\chi, \chi'$ : 
\begin{center}
$\chi \sim \chi'$ si et seulement si $\chi' \chi^{-1} \sim 1$. 
\end{center}
Soit $H^1(\mathscr{G}, C^*)$ le premier groupe de cohomologie de $\mathscr{G}$ à valeurs dans $C^*$ (les cochaînes étant supposées continues). Un caractère continu de $\mathscr{G}$ dans $C^*$ définit un élément de $H^1(\mathscr{G}, C^*)$ qui est nul si et seulement si le caractère est admissible. 

\medskip

Le caractère $\chi$ permet aussi de définir une action \og tordue \fg : $$\forall x \in C, \sigma \in \mathscr{G} : \sigma \cdot x = \chi(\sigma) \sigma(x).$$
On note $C(\chi)$ le $\mathscr{G}$-module ainsi obtenu. Si $\chi$ est admissible, le $\mathscr{G}$-module $C(\chi)$ est isomorphe à $C$, l'isomorphisme étant la multiplication par l'élément de $C^*$ intervenant dans la définition d'un caractère admissible. 

\medskip

On se donne un sous-corps $E$ de $K_v^*$ contenant $\mathbb{Q}_{\ell}$ et tel que $[E: \mathbb{Q}_{\ell}] < \infty$. On suppose que tous les conjugués de $E$ sur $\mathbb{Q}_{\ell}$ sont dans $K_v$, et on note $\Gamma_E$ l'ensemble des $\mathbb{Q}_{\ell}$-plongements de $E$ dans $K_v$. Le logarithme $\ell$-adique sur le groupe $U_E$ des unités de $E$, noté $\log$ (voir \cite{Serre68}, III, A2), permet d'interpréter certaines conditions d'admissibilité apparaissant naturellement en théorie de Hodge-Tate.
\begin{proposition}
\label{logarithme}
On suppose que $\chi$ est à valeurs dans $E^*$, et que pour $\tau \in \Gamma_E$, on a $\tau \circ \chi \sim 1$. Alors : $$\log \chi (\mathscr{I}) =0.$$
\end{proposition}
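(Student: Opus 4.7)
Via the isomorphism $E \otimes_{\mathbb{Q}_\ell} K_v \simeq \prod_{\tau \in \Gamma_E} K_v$, the conclusion $\log\chi(\mathscr{I}) = 0$ is equivalent to $\log(\tau \circ \chi)|_\mathscr{I} = 0$ for every $\tau \in \Gamma_E$. I would deduce this from the cohomology vanishing of Proposition~\ref{cohomologie2} applied, one embedding at a time, to the $K_v^*$-valued character $\tau \circ \chi$.

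\medskip

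First I would verify that $\chi$ already takes values in $U_E$, so that $\log\chi: \mathscr{G} \to (E,+)$ is a well-defined continuous additive homomorphism. For any $\tau \in \Gamma_E$ and any $x \in C^*$ realising $\tau \circ \chi \sim 1$, the $\mathscr{G}$-invariance of the valuation $v$ on $C$ forces $v((\tau\chi)(\sigma)) = 0$; running over all $\tau$ places $\chi(\sigma)$ in $U_E$. Since the $\ell$-adic logarithm vanishes on roots of unity, the vanishing of $\log(\tau\chi)|_\mathscr{I}$ is in turn equivalent to $\tau\chi$ having \emph{finite image} on $\mathscr{I}$.

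\medskip

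The technical heart is therefore the following statement, essentially due to Tate: if $\eta : \mathscr{G} \to K_v^*$ is continuous and admissible, then $\eta|_\mathscr{I}$ has finite image. I would argue by contradiction. Assume $\eta|_\mathscr{I}$ has infinite image. Post-composing $\log\eta : \mathscr{G} \to K_v$ with a well-chosen continuous $\mathbb{Z}_\ell$-linear projection $K_v \twoheadrightarrow \mathbb{Z}_\ell$, and adjusting by an unramified character if necessary (using that any continuous homomorphism $\mathscr{G}/\mathscr{I} \to \mathbb{Z}_\ell$ factors through the pro-$\ell$ Frobenius quotient), I would produce a continuous surjection $\mathscr{G} \twoheadrightarrow \mathscr{C} \simeq \mathbb{Z}_\ell$ through which $\eta$ factors as an infinite-order character $\tilde\eta : \mathscr{C} \to K_v^*$, the associated extension $K_\infty/K_v$ being totally ramified. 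Then $\tilde\eta^{\ell^{c_6}} \neq 1$, so Proposition~\ref{cohomologie2} gives $H^0(\mathscr{C}, X(\tilde\eta^{-1})) = 0$. On the other hand, since $\eta$ kills $\mathrm{Gal}(\bar{K}_v/K_\infty)$ by construction, the admissibility element $x \in C^*$ is fixed by $\mathrm{Gal}(\bar{K}_v/K_\infty)$, so by Ax--Sen--Tate it descends to a nonzero element of $X = \widehat{K_\infty}$ which is $\mathscr{C}$-invariant for the twisted action on $X(\tilde\eta^{-1})$. This contradicts the cohomology vanishing.

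\medskip

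The delicate point is arranging the procyclic quotient $\mathscr{G} \twoheadrightarrow \mathscr{C}$ through which $\eta$ factors: one must separate the ramified and unramified parts of $\log\eta$ so that the resulting $\mathbb{Z}_\ell$-surjection has matching image on $\mathscr{G}$ and on $\mathscr{I}$, which is what makes $K_\infty/K_v$ totally ramified and compatible with the procyclic framework of Section~3. Once the existence of such a quotient is established and the descent of $x$ from $C$ to $X$ is carried out, applying the argument simultaneously to all $\tau \in \Gamma_E$ and combining via the product decomposition of $E \otimes_{\mathbb{Q}_\ell} K_v$ yields $\log\chi(\mathscr{I}) = 0$.
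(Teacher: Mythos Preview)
Your reduction to each embedding and the $H^0$/Ax--Sen--Tate endgame are both sound in spirit, but the central step does not go through as written. You claim that, after projecting $\log\eta$ to $\mathbb{Z}_\ell$, the original character $\eta = \tau\circ\chi$ itself factors through the resulting procyclic quotient $\mathscr{C}$. This is false whenever $\eta(\mathscr{I})$ has $\mathbb{Z}_\ell$-rank larger than one, which certainly can happen since $\eta$ takes values in $\tau(E)^*$ with $[E:\mathbb{Q}_\ell]$ arbitrary. Concretely, the kernel of your surjection $\mathscr{G}\twoheadrightarrow\mathscr{C}$ contains $\ker(\log\eta)$ strictly as soon as the rank exceeds one, so $\eta$ cannot descend; the admissibility witness $x$ then has no reason to be fixed by $\mathrm{Gal}(\bar K_v/K_\infty)$, and the Ax--Sen--Tate descent to $X$ collapses. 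Your ``delicate point'' paragraph senses the difficulty but misdiagnoses it: the issue is not arranging the ramified/unramified split of a single $\mathbb{Z}_\ell$-quotient, it is that no such quotient carries $\eta$ at all.

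The paper resolves exactly this obstruction, and in doing so uses the hypothesis for \emph{all} $\tau$ simultaneously rather than one at a time. It chooses a $\mathbb{Q}_\ell$-linear $f:E\to K_v$ making $f(\log\chi(\mathscr{I}))$ one-dimensional, writes $f=\sum_{\tau}k_\tau\tau$ by independence of characters, and then exponentiates $f\circ\log$ to a group homomorphism $F:U_E\to U_{K_v}$. The crucial and non-formal step is that $F\circ\chi$ is again admissible: this needs $\tau\circ\chi\sim 1$ for every $\tau\in\Gamma_E$, together with Serre's passage between additive and multiplicative cocycles (\cite{Serre68}, III, A2, proposition~3). Only this new character has rank-one image on $\mathscr{I}$, hence factors (up to a finite group) through a genuine $\mathbb{Z}_\ell$-quotient, and Proposition~\ref{cohomologie2} then applies. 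So your reduction to a single $\tau$ buys nothing: to force rank one you must re-import the admissibility of all the other conjugates, and the missing ingredient in your sketch is precisely the construction of $F\circ\chi$ and the verification that it remains admissible.
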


\begin{proof}
On reprend les arguments donnés dans la preuve de \cite{Serre68}, III, A3, proposition 3. Le sous-groupe $\log \chi (\mathscr{I})$ de $E$ est compact, isomorphe à $\mathbb{Z}_{\ell}^n$ pour un certain entier $n$. Supposons par l'absurde que $n \geq 1$.

En considérant un projecteur bien choisi, on voit qu'il existe une application $\mathbb{Q}_{\ell}$-linéaire : $f : E \rightarrow K_v$ telle que $f \big(\log \chi (\mathscr{I}) \otimes_{\mathbb{Z}_{\ell}} \mathbb{Q}_{\ell}\big)$ soit de dimension $1$. Par indépendance des caractères, l'ensemble $\Gamma_E$ est une base de $\mathrm{Hom}_{\mathbb{Q}_{\ell}}(E,K_v)$, ce qui donne une décomposition :  $$f= \sum_{\tau \in \Gamma_E} k_{\tau} \tau,$$ où les $k_{\tau}$ sont dans $K_v$. Le logarithme étant défini sur $\mathbb{Q}_{\ell}$, on a : $$f \circ \log \chi = \sum_{\tau \in \Gamma_E} k_{\tau} \log \tau \circ \chi.$$ Les hypothèses et la proposition 3 de \cite{Serre68}, III, A2 impliquent que l'image de  $f \circ \log \chi$ dans $H^1(\mathscr{G}, C)$ est nulle. Par les propriétés du logarithme (\cite{Serre86}, III, A2), quitte à multiplier $f$ par une puissance convenable de $\ell$, il existe un morphisme continu $$F: U_E \rightarrow U_{K_v}$$ tel que $f \circ \log = \log \circ F$. En utilisant une nouvelle fois la proposition 3 de \cite{Serre68}, III, A2, on voit que $F \circ \chi \sim 1$. De plus, $\log F \circ \chi (\mathscr{I}) \simeq \mathbb{Z}_{\ell}$, et $F \circ \chi (\mathscr{I}) \simeq \mathbb{Z}_{\ell} \times H$, pour un certain groupe $H$ fini. 

Soit $\mathscr{C}$ un quotient de $\mathscr{I}$ dont l'image est $\mathbb{Z}_{\ell}$. Il définit une extension $K_{\infty}/K_v$ totalement ramifiée de groupe de Galois $\mathscr{C}$ à laquelle on peut appliquer les résultats obtenus dans cette partie. Par le Lemme \ref{cohomologie1} et la Proposition \ref{cohomologie2} : $$H^1(\mathscr{C}, X) \neq H^1(\mathscr{C}, X(F \circ \chi)).$$ La proposition 10 de \cite{Tate67} donne alors : $$H^1(\mathscr{G}, C) \neq H^1(\mathscr{G}, C(F \circ \chi)),$$
ce qui contredit l'admissibilité de $F \circ \chi$.
\end{proof}

\section{Le cas des petits premiers : autour du théorème de Bogomolov}

La question des homothéties étant à peu près clarifiée pour les grands premiers, il s'agit désormais de trouver un exposant indépendant de $\ell$ dans le Théorème \ref{homothéties serre}. Dans la mesure où on dispose d'une borne pour l'entier $\ell_0$ qui apparaît dans le Théorème \ref{wintenberger}, on peut se contenter, pour les premiers $\ell \leq \ell_0$, d'un exposant dépendant de $A$ mais aussi de $\ell$. La question est donc de donner une version explicite du Théorème \ref{bogomolov}, dont on va reprendre la preuve en détail.

\subsection{Algèbre de Lie du groupe dérivé}

La première étape consiste à contrôler le groupe dérivé $H_{\ell}(\mathbb{Q}_{\ell})'$, pour pouvoir ensuite utiliser les propriétés des représentations abéliennes. Pour y arriver, on s'appuie sur un résultat classique de la théorie des groupes algébriques en caractéristique nulle.

\begin{proposition}
Pour $\ell \in \mathcal{P}$, le groupe $G_{\ell} \cap \mathcal{S}_{\ell}(\mathbb{Z}_{\ell})$ est d'indice fini dans $\mathcal{S}_{\ell}(\mathbb{Z}_{\ell})$.
\end{proposition}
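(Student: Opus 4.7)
Le plan est de d\'eduire la finitude de l'indice directement du Th\'eor\`eme \ref{bogomolov}, qui affirme que $G_\ell$ est un sous-groupe ouvert de $H_\ell(\QQ_\ell)$ pour la topologie $\ell$-adique. L'argument reposera essentiellement sur la comparaison entre les topologies induites par l'inclusion $\mathcal{S}_\ell(\QQ_\ell) \subset H_\ell(\QQ_\ell)$ d'une part, et par celle de $\mathcal{S}_\ell(\ZZ_\ell) \subset \mathcal{S}_\ell(\QQ_\ell)$ d'autre part.

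Tout d'abord, on observera que $\mathcal{S}_\ell(\QQ_\ell)$ est un sous-groupe ferm\'e de $H_\ell(\QQ_\ell)$, en tant que groupe des points de la fibre g\'en\'erique du sous-sch\'ema en groupes ferm\'e $\mathcal{S}_\ell \subset \mathcal{H}_\ell$. La trace $G_\ell \cap \mathcal{S}_\ell(\QQ_\ell)$ sera donc un sous-groupe ouvert de $\mathcal{S}_\ell(\QQ_\ell)$ pour la topologie induite.

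Ensuite, comme $\mathcal{H}_\ell$ est un $\ZZ_\ell$-sch\'ema en groupes r\'eductif au sens de \cite{Grothendieck70}, son groupe d\'eriv\'e $\mathcal{S}_\ell$ est encore un $\ZZ_\ell$-sch\'ema en groupes affine et lisse. Un argument classique d'exponentielle $\ell$-adique, combin\'e \`a la lissit\'e et \`a l'immersion ferm\'ee $\mathcal{S}_\ell \hookrightarrow \mathrm{GL}_{2g}$, permettra de montrer que $\mathcal{S}_\ell(\ZZ_\ell)$ est \`a la fois compact et ouvert dans $\mathcal{S}_\ell(\QQ_\ell)$.

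Il restera alors \`a conclure que $G_\ell \cap \mathcal{S}_\ell(\ZZ_\ell)$ est un sous-groupe ouvert du compact $\mathcal{S}_\ell(\ZZ_\ell)$, et est par cons\'equent d'indice fini. Le point qui m\'eritera le plus d'attention sera la v\'erification de l'ouverture de $\mathcal{S}_\ell(\ZZ_\ell)$ dans $\mathcal{S}_\ell(\QQ_\ell)$~; cela dit, ce point se ram\`ene sans difficult\'e \`a la propri\'et\'e analogue, bien connue, pour $\mathrm{GL}_{2g}(\ZZ_\ell) \subset \mathrm{GL}_{2g}(\QQ_\ell)$.
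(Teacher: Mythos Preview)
Ton argument est correct. Deux remarques mineures : le Th\'eor\`eme~\ref{bogomolov} tel qu'\'enonc\'e ne porte que sur les homoth\'eties ; l'\'enonc\'e dont tu as r\'eellement besoin --- que $G_\ell$ est ouvert dans $H_\ell(\QQ_\ell)$ --- est celui rappel\'e juste apr\`es dans le texte. Par ailleurs, l'ouverture de $\mathcal{S}_\ell(\ZZ_\ell)$ dans $\mathcal{S}_\ell(\QQ_\ell)$ est superflue pour ton raisonnement : d\`es que $G_\ell$ est ouvert dans $H_\ell(\QQ_\ell)$, sa trace sur le sous-groupe compact $\mathcal{S}_\ell(\ZZ_\ell)$ y est ouverte pour la topologie induite, donc d'indice fini ; la lissit\'e et l'exponentielle ne sont pas n\'ecessaires.

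L'article proc\`ede diff\'eremment, par les alg\`ebres de Lie : en notant $\mathfrak{g}_\ell$, $\mathfrak{h}_\ell$, $\mathfrak{h}_\ell'$ les alg\`ebres de Lie de $G_\ell$, $H_\ell(\QQ_\ell)$ et $H_\ell(\QQ_\ell)'$, il \'etablit via \cite{Borel69}, II, 7.8--7.9, la cha\^{\i}ne $\mathfrak{h}_\ell' = [\mathfrak{h}_\ell,\mathfrak{h}_\ell] = [\mathfrak{g}_\ell,\mathfrak{g}_\ell] \subset \mathfrak{g}_\ell$, d'o\`u l'ouverture de $G_\ell \cap \mathcal{S}_\ell(\ZZ_\ell)$ dans $\mathcal{S}_\ell(\QQ_\ell)$. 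L'\'egalit\'e interm\'ediaire repose elle aussi, en filigrane, sur $\mathfrak{g}_\ell = \mathfrak{h}_\ell$, c'est-\`a-dire sur la forme forte du th\'eor\`eme de Bogomolov ; les deux approches s'appuient donc sur le m\^eme ingr\'edient. Ta voie purement topologique est plus directe ; celle de l'article a l'avantage de d\'egager explicitement l'inclusion structurelle $\mathfrak{h}_\ell' \subset \mathfrak{g}_\ell$, qui est le point conceptuel sous-jacent et s'inscrit dans le projet annonc\'e de revisiter la preuve de Bogomolov.
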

\begin{proof}
Soit $\mathfrak{g}_{\ell}$ l'algèbre de Lie de $G_{\ell}$,  $\mathfrak{h}_{\ell}$ celle de $H_{\ell}(\mathbb{Q}_{\ell})$ et $\mathfrak{h}_{\ell}'$ celle de $H_{\ell}(\mathbb{Q}_{\ell})'$. Par la proposition 7.8 et le corollaire 7.9 de \cite{Borel69}, II, on a : $$\mathfrak{h}_{\ell}' = [\mathfrak{h}_{\ell}, \mathfrak{h}_{\ell}]=[\mathfrak{g}_{\ell}, \mathfrak{g}_{\ell}] \subset \mathfrak{g}_{\ell}.$$
On en déduit que $G_{\ell} \cap H_{\ell}(\mathbb{Q}_{\ell})'=G_{\ell} \cap \mathcal{S}_{\ell}(\mathbb{Z}_{\ell})$ est ouvert pour la topologie $\ell$-adique dans $\mathcal{H}_{\ell}(\mathbb{Q}_{\ell})'=\mathcal{S}_{\ell}(\mathbb{Q}_{\ell})$, donc d'indice fini dans $\mathcal{S}_{\ell}(\mathbb{Z}_{\ell})$.
\end{proof}

On note $s(A, \ell):=\big[\mathcal{S}_{\ell}(\mathbb{Z}_{\ell}) : G_{\ell} \cap \mathcal{S}_{\ell}(\mathbb{Z}_{\ell})\big].$ Trouver une borne explicite pour $s(A, \ell)$ est un problème très difficile si l'on ne fait pas d'hypothèse sur $A$. La stratégie de Serre ne semble pas pouvoir s'adapter aisément pour majorer $s(A, \ell)$ lorsque $\ell$ est un premier quelconque. 

\medskip

Des résultats ont été obtenus en petite dimension, notamment basés sur l'étude de \og l'algèbre de Lie entière \fg \, associée à $G_{\ell}$ (voir par exemple \cite{Lombardo15}, qui donne une version explicite du théorème de l'image ouverte de Serre sur les courbes elliptiques).

\subsection{Algébricité locale et structure de Hodge-Tate}

On souhaite maintenant étudier, pour $\ell$ quelconque, la représentation abélienne : $$\rho_{\ell}^{\mathrm{ab}} : G_K \longrightarrow H_{\ell}(\mathbb{Q}_{\ell})/H_{\ell}(\mathbb{Q}_{\ell})'.$$
La première étape est de se ramener à une représentation linéaire.

\begin{lemma}
Il existe un entier $c(g)>0$ et une représentation $$\sigma : H_{\ell}(\mathbb{Q}_{\ell}) \longrightarrow \mathrm{GL}_{n}(\mathbb{Q}_{\ell}),$$ telle que $n \leq c(g)$ et $\Ker \sigma = H_{\ell}(\mathbb{Q}_{\ell})'$. De plus, la composée $\sigma \circ \rho_{\ell}$ est de Hodge-Tate.
\end{lemma}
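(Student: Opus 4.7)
\medskip

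Le plan est de construire $\sigma$ comme repr\'esentation alg\'ebrique du groupe alg\'ebrique $H_\ell$, et de r\'eduire le probl\`eme \`a plonger le quotient $T_\ell := H_\ell/H_\ell'$ dans un $\mathrm{GL}_n$ convenable. On commencerait par remarquer que, par le th\'eor\`eme de semi-simplicit\'e de Faltings, la repr\'esentation $\rho_\ell$ est semi-simple, d'o\`u la r\'eductivit\'e de $H_\ell$ (d\'ej\`a connexe apr\`es l'extension de \S 2.1). Le quotient $T_\ell$ est donc un tore alg\'ebrique sur $\mathbb{Q}_\ell$, de dimension $d \leq \dim H_\ell \leq 4g^2$. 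Il suffira ensuite de plonger $T_\ell$ dans un $\mathrm{GL}_n$ avec $n$ contr\^ol\'e par $g$, puis de composer avec la projection canonique $H_\ell \to T_\ell$.

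\medskip

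Pour obtenir ce plongement, on consid\'ererait le groupe des caract\`eres $X^*(T_\ell)$, muni d'une action continue de $\mathrm{Gal}(\overline{\mathbb{Q}}_\ell/\mathbb{Q}_\ell)$ qui se factorise par un quotient fini. Ce quotient s'identifie \`a un sous-groupe fini de $\mathrm{GL}_d(\mathbb{Z})$ : par le th\'eor\`eme de Minkowski, son ordre -- et donc le degr\'e du corps de d\'eploiement $[L_\ell : \mathbb{Q}_\ell]$ -- est major\'e par une constante $M(d) \leq M(4g^2)$. Le tore $T_\ell$ se plongerait alors dans la restriction de Weil $\mathrm{Res}_{L_\ell/\mathbb{Q}_\ell}(\mathbb{G}_m^d)$, elle-m\^eme plong\'ee dans $\mathrm{GL}_n(\mathbb{Q}_\ell)$ avec $n := d \cdot [L_\ell : \mathbb{Q}_\ell] \leq 4g^2 \cdot M(4g^2) =: c(g)$. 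On obtiendrait ainsi la repr\'esentation alg\'ebrique $\sigma : H_\ell(\mathbb{Q}_\ell) \to \mathrm{GL}_n(\mathbb{Q}_\ell)$ cherch\'ee, dont le noyau est bien $H_\ell(\mathbb{Q}_\ell)'$.

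\medskip

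Pour v\'erifier enfin que $\sigma \circ \rho_\ell$ est de Hodge-Tate, on se restreindrait \`a un groupe de d\'ecomposition $G_{K_v}$ au-dessus d'une place $v \, | \, \ell$. Comme $\rho_\ell |_{G_{K_v}}$ est de Hodge-Tate (th\'eor\`emes de Tate-Raynaud d\'ej\`a utilis\'es dans la preuve pr\'ec\'edente) et comme $\sigma$ est alg\'ebrique, la compos\'ee $\sigma \circ \rho_\ell |_{G_{K_v}}$ appartient \`a la cat\'egorie tannakienne engendr\'ee par $\rho_\ell |_{G_{K_v}}$ ; cette cat\'egorie \'etant incluse dans la cat\'egorie tannakienne des repr\'esentations de Hodge-Tate (stable par sous-quotients, produits tensoriels et duals), la conclusion en r\'esulte. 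La principale difficult\'e serait le contr\^ole explicite de $c(g)$, qui repose sur les bornes connues pour la taille des sous-groupes finis de $\mathrm{GL}_d(\mathbb{Z})$ : la borne de Minkowski est essentiellement polynomiale en $g!$, ce qui reste acceptable pour les applications vis\'ees.
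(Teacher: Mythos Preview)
Your argument is correct but takes a genuinely different route from the paper. The paper constructs $\sigma$ concretely via Chevalley's theorem: using that there are only finitely many semi-simple subgroups of $\mathrm{GL}_{2g}$ up to isomorphism (independently of $\ell$), it takes the derived group $\mathcal{S}_\ell$ and lets $W$ be the space of its tensor invariants inside $\bigoplus_{m\le c(S)}(\mathbb{Q}_\ell^{2g})^{\otimes m}$; then $\sigma$ is the action of $H_\ell(\mathbb{Q}_\ell)$ on $W$, with $n=\dim W\le (2g)^{\max_S c(S)+1}=:c(g)$. This makes the Hodge--Tate property immediate, since $W$ is visibly a sub\-representation of a direct sum of tensor powers of the Hodge--Tate representation $\rho_\ell$. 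Your construction instead bypasses Chevalley entirely: you go straight to the quotient torus $T_\ell=H_\ell/H_\ell'$, embed it via Weil restriction from its splitting field, and bound the degree of that field by Minkowski's theorem on finite subgroups of $\mathrm{GL}_d(\mathbb{Z})$. This is conceptually cleaner and has the real advantage of giving a \emph{numerically explicit} $c(g)=4g^2\cdot M(4g^2)$, whereas the paper's bound depends on the constants $c(S)$ from Chevalley, which are effective in principle but not easy to write down. The price you pay is that your Hodge--Tate verification is less direct: you need the Tannakian fact that every algebraic representation of $H_\ell$ lies in the tensor category generated by the faithful standard representation, together with the stability of Hodge--Tate representations under subquotients, tensor products and duals --- all true, but a step removed from the hands-on argument the paper gives.
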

\begin{proof}
Il existe un nombre fini de sous-groupes algébriques semi-simples de $\mathrm{GL}_{2g}$, à isomorphisme près et indépendamment de $\ell$. En conséquence d'un théorème de Chevalley (voir \cite{Deligne82}, proposition 3.1.c), si $S$ est un tel groupe défini sur $\mathbb{Q}_{\ell}$, il est caractérisé par ses invariants tensoriels, c'est-à-dire qu'il existe un entier $c(S)>0$ tel que $S$ est le sous-groupe de $\mathrm{GL}_{2g}$ fixant : $$\Big\{x \in \bigoplus_{m \leq c(S)} (\mathbb{Q}_{\ell}^{2g})^{\otimes m}, \forall s \in S : s\cdot x=x \Big\}.$$
On prend $S:= \mathcal{S}_{\ell}$ et on considère le sous-espace vectoriel $W$ de $$\bigoplus_{m \leq c(S)} (\mathbb{Q}_{\ell}^{2g})^{\otimes m}$$ fixé par $S$. On a : $$\dim(W) \leq \sum_{m=0}^{c(S)} (2g)^m \leq (2g)^{c(S)+1} \leq (2g)^{\max_S c(S)+1}:=c(g).$$ La représentation $\sigma$ donnée par l'action de $H_{\ell}(\mathbb{Q}_{\ell})$ sur $W$ est de degré borné par $c(g)$, et on a par construction : $$\Ker \sigma = H_{\ell}(\mathbb{Q}_{\ell})'.$$ De plus, on sait que $\rho_{\ell}$ est de Hodge-Tate et que cette propriété est préservée par tensorisation (\cite{Bogomolov80}, 1, preuve du corollaire), donc la représentation $\sigma \circ \rho_{\ell}$ est encore de Hodge-Tate.
\end{proof} 

On pose $\sigma_{\ell}^{\mathrm{ab}}:=\sigma \circ \rho_{\ell}.$ L'image $T_{\ell}$ de  $H_{\ell}$ par $\sigma$ est un groupe algébrique commutatif qui n'a pas de composante additive, car celle-ci donnerait lieu à une extension abélienne non-ramifiée infinie de $K$. C'est donc un tore.

\begin{proposition}
On pose $m := n!(\ell^{n!}-1) \ell^{1+v_{\ell}(n!)}$. Alors : $$T_{\ell}(\mathbb{Z}_{\ell})^m \subset \sigma_{\ell}^{\mathrm{ab}} (G_K).$$
\end{proposition}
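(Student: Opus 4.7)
Le plan consiste \`a exploiter la structure de tore de $T_\ell$ pour se ramener \`a une famille de caract\`eres ab\'eliens de Hodge-Tate, \`a laquelle on pourra appliquer la Proposition~\ref{logarithme}, puis \`a r\'ecup\'erer l'image via la th\'eorie locale du corps de classes.

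On commence par d\'eployer $T_\ell$ sur une extension galoisienne $E \subset \overline{\mathbb{Q}}_\ell$ de $\mathbb{Q}_\ell$ de degr\'e divisant $n!$. Le plongement $T_\ell \subset \mathrm{GL}_n$ induit un plongement de $X^*(T_\ell)$ dans le r\'eseau des poids $\mathbb{Z}^n$ d'un tore maximal de $\mathrm{GL}_n$, et $\mathrm{Gal}(E/\mathbb{Q}_\ell)$ agit par permutation des coordonn\'ees (en permutant les espaces propres de $T_\ell$ sur $\overline{\mathbb{Q}}_\ell$), d'o\`u un plongement $\mathrm{Gal}(E/\mathbb{Q}_\ell) \hookrightarrow S_n$ et $[E:\mathbb{Q}_\ell] \mid n!$. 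Apr\`es extension des scalaires \`a $E$, la repr\'esentation ab\'elienne $\sigma_\ell^{\mathrm{ab}}$ se d\'ecompose en une famille de caract\`eres continus $\chi_1, \ldots, \chi_r : G_K \to E^\times$ (avec $r = \dim T_\ell \leq n$), qui h\'eritent de la propri\'et\'e de Hodge-Tate.

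On applique ensuite la Proposition~\ref{logarithme} \`a chaque $\chi_j$ en toute place $v$ de $K$ au-dessus de $\ell$ : l'hypoth\`ese d'admissibilit\'e $\tau \circ \chi_j \sim 1$ pour $\tau \in \Gamma_E$ traduit pr\'ecis\'ement la d\'ecomposition de Hodge-Tate locale. On obtient $\log \chi_j(\mathscr{I}_v) = 0$, ce qui place $\chi_j(\mathscr{I}_v)$ dans le sous-groupe de torsion $\mu(E)$ de $\mathcal{O}_E^\times$. Ce dernier se d\'ecompose en $\mu_{\ell^f - 1}(E) \times \mu_{\ell^\infty}(E)$, o\`u le degr\'e r\'esiduel $f$ divise $[E:\mathbb{Q}_\ell] \mid n!$ (d'o\`u $\ell^f - 1 \mid \ell^{n!}-1$) et l'ordre des racines $\ell$-primaires de l'unit\'e dans $E$ est born\'e par $\ell^{1+v_\ell(n!)}$, puisque $\mathbb{Q}_\ell(\mu_{\ell^{k+1}})$ est de degr\'e au moins $\ell^k$ sur $\mathbb{Q}_\ell$.

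En dehors de $\ell$, la semi-stabilit\'e impos\'ee \`a $A$ et le passage \`a l'ab\'elianisation assurent que $\sigma_\ell^{\mathrm{ab}}$ est non ramifi\'ee presque partout. La r\'eciprocit\'e d'Artin, coupl\'ee \`a l'ouverture de $\sigma_\ell^{\mathrm{ab}}(G_K)$ dans $T_\ell(\mathbb{Q}_\ell)$ (cons\'equence du Th\'eor\`eme~\ref{bogomolov} appliqu\'e \`a $\sigma_\ell^{\mathrm{ab}}$), permet alors d'identifier l'image \`a un sous-groupe ouvert dont l'indice dans $T_\ell(\mathcal{O}_E)$ est contr\^ol\'e par l'exposant de $\mu(E)$, soit $(\ell^{n!}-1)\ell^{1+v_\ell(n!)}$. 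En descendant de $E$ \`a $\mathbb{Q}_\ell$ via la norme, qui satisfait $T_\ell(\mathbb{Z}_\ell)^{n!} \subset N_{E/\mathbb{Q}_\ell}(T_\ell(\mathcal{O}_E))$, on obtient finalement $T_\ell(\mathbb{Z}_\ell)^m \subset \sigma_\ell^{\mathrm{ab}}(G_K)$.

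La principale difficult\'e r\'eside dans la r\'epartition soigneuse des trois contributions de $m$ : le facteur $n!$ vient de la descente galoisienne de $E$ \`a $\mathbb{Q}_\ell$, $\ell^{n!}-1$ de la torsion premi\`ere \`a $\ell$ dans le corps r\'esiduel de $E$, et $\ell^{1+v_\ell(n!)}$ des racines $\ell$-primaires de l'unit\'e dans $E$. Chacune appara\^it \`a une \'etape distincte et aucune ne peut \^etre supprim\'ee sans hypoth\`ese suppl\'ementaire sur $T_\ell$ ou sur $E$ ; leur conjonction exige une articulation pr\'ecise de la th\'eorie de Hodge-Tate, par l'interm\'ediaire de la Proposition~\ref{logarithme}, et de la th\'eorie locale du corps de classes.
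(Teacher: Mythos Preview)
Il y a une erreur essentielle dans votre application de la Proposition~\ref{logarithme}. Vous affirmez que \og l'hypoth\`ese d'admissibilit\'e $\tau \circ \chi_j \sim 1$ pour $\tau \in \Gamma_E$ traduit pr\'ecis\'ement la d\'ecomposition de Hodge-Tate locale\fg{}. C'est faux : dire que $\chi_j$ est de Hodge-Tate signifie que pour chaque plongement $\tau$ on a $\tau \circ \chi_j \sim \chi_{\mathrm{cycl}}^{\,n_\tau}$ pour un certain poids $n_\tau \in \mathbb{Z}$, et non $\tau \circ \chi_j \sim 1$. Ces deux conditions ne co\"incident que si tous les poids sont nuls, ce qui n'est jamais le cas ici : la remarque de Deligne donne $\mathbb{Z}_\ell^\times \subset H_\ell(\mathbb{Q}_\ell)$, donc le caract\`ere cyclotomique se factorise par $T_\ell$ et au moins un des $\chi_j$ est de poids non nul. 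Si votre argument \'etait correct, on conclurait que $\chi_j(\mathscr{I}_v)$ est fini pour tout $j$, donc que l'image de chaque groupe d'inertie par $\sigma_\ell^{\mathrm{ab}}$ est finie ; ceci interdit que $\sigma_\ell^{\mathrm{ab}}(G_K)$ contienne un sous-groupe ouvert de $T_\ell(\mathbb{Z}_\ell)$, et contredit directement l'\'enonc\'e que vous cherchez \`a d\'emontrer.

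L'id\'ee manquante est celle d'\emph{alg\'ebricit\'e locale} (Serre, \textit{Abelian $\ell$-adic representations}, III, A5--A7). La preuve de l'article d\'ecompose $\mathbb{Q}_\ell^n$ en facteurs simples $V$ ; le commutant de $\sigma_v|_V$ est un corps $E$ avec $[E:\mathbb{Q}_\ell] = \dim V \leq n$, et $\sigma_v|_V$ est donn\'ee par un caract\`ere $\chi_V$ \`a valeurs dans $E^\times$. On construit alors, \`a partir des poids de Hodge-Tate et de la th\'eorie du corps de classes local, un morphisme \emph{alg\'ebrique} $r_V$ du tore $T_{K'}$ vers $E^\times$ tel que $\tau \circ (r_V^{-1}\chi_V) \sim 1$ pour tout $\tau \in \Gamma_E$. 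C'est \`a $r_V^{-1}\chi_V$, et non \`a $\chi_V$, que s'applique la Proposition~\ref{logarithme} : on obtient $r_V^m = \chi_V^m$ sur l'inertie. L'image de $r_V$, parce qu'elle provient d'un morphisme alg\'ebrique de tores via la r\'eciprocit\'e locale, fournit alors le sous-groupe $T(\mathbb{Z}_\ell)^m$ de l'\'enonc\'e, et c'est seulement \`a la fin qu'on identifie $T = T_\ell$ en invoquant que $H_\ell$ est l'enveloppe alg\'ebrique de $G_\ell$. Dans ce cadre, le facteur $n!$ ne provient pas d'une descente par la norme, mais du passage au compositum $K'$ de $K_v$ avec la cl\^oture galoisienne de $E$ (n\'ecessaire pour que les conjugu\'es de $E$ soient dans le corps de base), dont le degr\'e divise $[E:\mathbb{Q}_\ell]!$, lui-m\^eme divisant $n!$.
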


\begin{proof}
On commence par fixer une place $v | \ell$ et on considère la représentation $\sigma_v$ déduite de $\sigma_{\ell}^{\mathrm{ab}}$ par projection sur le groupe d'inertie $I_v$. D'après le théorème de Tate (\cite{Serre68}, III, A7), la représentation $\sigma_v$, qui est semi-simple puisque $T_{\ell}$ est un tore, est localement algébrique. Cela signifie qu'elle provient d'un morphisme algébrique du tore $T_v$ associé à $K_v$ sur $\mathbb{Q}_{\ell}$ via la théorie du corps de classe local -- modulo un sous-groupe fini de $I_v$. 

\medskip

On va quantifier ce fait. Pour cela, on se donne un facteur simple $V$ de $\mathbb{Q}_{\ell}^n$. Par le lemme de Schur, le commutant de $\sigma_v |_V$ est un corps $E$ vérifiant : $$[E: \mathbb{Q}_{\ell}] = \dim(V) \leq n,$$ et $\sigma_v |_V$ est donnée par un caractère $\chi_V$ à valeurs dans $E^*$. Soit $K'$ le compositum de $K_v$ et de la clôture galoisienne de $E$. La structure de Hodge-Tate de $\sigma_v$ permet de définir un morphisme algébrique $r_V$ défini sur le tore $T_{K'}$ associé à $K'$ et à valeurs dans $E^*$. Il est donné par la formule (\cite{Serre68}, III, A6, proposition 6) : $$r_V:= \prod_{\tau \in \Gamma_E} \tau^{-1} \circ \chi_{\tau E}^{n_{\tau}},$$ où $\Gamma_E$ est l'ensemble des $\mathbb{Q}_{\ell}$-morphismes $\tau : E \longrightarrow K'$, de poids $n_{\tau}$ dans la décomposition de Hodge-Tate, le caractère $\chi_{\tau E}: G_{K'}^{\mathrm{ab}} \longrightarrow \tau E^*$ se déduisant de l'inclusion $\tau E \subset K'$ par la théorie du corps de classe local sur $\tau E$ (\cite{Serre68}, III, A4). Le lien entre $r_V$ et $\chi_V$ est alors donné par les relations (\cite{Serre68}, III, A5, théorème 2) : $$\forall \tau \in \Gamma_E : \; \; \tau \circ ( r_V^{-1} \chi _V)  \sim 1.$$ 
Par la Proposition \ref{logarithme}, on en déduit : $$\log \Big( r_V^{-1} \chi_V \Big(I_v^{[K':K_v]}\Big) \Big)=0,$$
et $$r_V^{-1} \chi_V \Big(I_v^{[K':K_v]}\Big) \subset \Ker \log.$$ Par le lemme de \cite{Serre68}, III, A2, si $f_E$ est le degré résiduel de $E$ et si la composante $\ell$-primaire du sous-groupe de torsion de $E^*$ est de la forme $(\mathbb{Z}/ \ell \mathbb{Z})^{\alpha_E}$, on a : 
\begin{eqnarray*}
|\Ker \log | & = & (\ell^{f_E}-1)\ell^{\alpha_E} \\
& | &  (\ell^{[E:\mathbb{Q}_{\ell}]}-1)\ell^{1 +v_{\ell}([E: \mathbb{Q}_{\ell}])} \\
& | & (\ell^{n!}-1) \ell^{1+v_{\ell}(n!)}.\end{eqnarray*}
On a ainsi : $r_V^{m}= \chi_V^{m}$ sur $I_v$. L'entier $m$ ne dépendant pas du facteur simple $V$ considéré, il existe donc un morphisme algébrique $r_v$ défini sur le tore $T_v$ tel que  $r_v^{m}= \sigma_v^{m}$ sur $I_v$. 

\medskip

On peut finalement globaliser en considérant le produit des $T_v$, pour $v | \ell$ : il existe un tore $T$, image de $T_K$ par un certain morphisme algébrique $r$, tel que : $$T(\mathbb{Z}_{\ell})^m \subset \sigma_{\ell}^{\mathrm{ab}}(G_K) \subset T(\mathbb{Q}_{\ell}).$$ Mais comme $H_{\ell}$ est l'enveloppe algébrique de $G_{\ell}$, on a $T_{\ell}=T$ .
\end{proof}

\begin{corollary}
Soit $c := (\ell-1) \, \ell^{1+2v_{\ell}(c(g)!)}$. Alors : $$(\mathbb{Z}_{\ell}^{\times})^c \subset \rho_{\ell}^{\mathrm{ab}}(G_K).$$
\end{corollary}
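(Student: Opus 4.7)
The plan is to specialize the preceding Proposition to the one-dimensional subgroup of homotheties. By construction $\Ker \sigma = H_{\ell}(\QQ_\ell)'$, so $\sigma$ induces an injection $H_{\ell}(\QQ_\ell)/H_{\ell}(\QQ_\ell)' \hookrightarrow T_\ell(\QQ_\ell)$ that identifies $\rho_\ell^{\mathrm{ab}}(G_K)$ with $\sigma_\ell^{\mathrm{ab}}(G_K)$. Consequently, $(\ZZ_\ell^\times)^c \subset \rho_\ell^{\mathrm{ab}}(G_K)$ is equivalent to $\sigma((\ZZ_\ell^\times)^c) \subset \sigma_\ell^{\mathrm{ab}}(G_K)$, and by the Proposition it is enough to prove the stronger inclusion $\sigma((\ZZ_\ell^\times)^c) \subset T_\ell(\ZZ_\ell)^m$, where $m = n!(\ell^{n!}-1)\ell^{1+v_\ell(n!)}$ and $n \leq c(g)$.

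The essential input is the tensorial origin of $\sigma$: since $W \subset \bigoplus_{j \leq c(S)} (\QQ_\ell^{2g})^{\otimes j}$, a homothety $x \cdot \Id \in \GL_{2g}(\ZZ_\ell)$ acts on each simple $T_\ell$-subfactor $V$ of $W$ by the scalar $x^{k_V}$, where $k_V \in \ZZ$ satisfies $|k_V| \leq c(g)$. Thus on characters one has $\chi_V(\sigma(x)^c) = x^{c k_V}$, and verifying $\sigma(x)^c \in T_\ell(\ZZ_\ell)^m$ reduces to checking, for every simple factor, that $x^{c k_V}$ is an $m$-th power in $\ZZ_\ell^\times$.

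The final step is an elementary computation in $\ZZ_\ell^\times$. Using the factorization $m = n! \cdot (\ell-1)(1+\ell+\cdots+\ell^{n!-1}) \cdot \ell^{1+v_\ell(n!)}$ together with $v_\ell(\ell^{n!}-1)=0$ and $\ell \equiv 1 \pmod{\ell-1}$, one finds $\gcd(\ell-1, m) = \ell - 1$ and $v_\ell(m) = 1 + 2 v_\ell(n!)$; for odd $\ell$ the exponent of $\ZZ_\ell^\times/(\ZZ_\ell^\times)^m$ is then $(\ell-1)\ell^{1+2v_\ell(n!)}$. Since $n \leq c(g)$ this integer divides $c = (\ell-1)\ell^{1+2v_\ell(c(g)!)}$, so $x^{ck_V} \in (\ZZ_\ell^\times)^m$ for every $x \in \ZZ_\ell^\times$ and every weight $k_V$.

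The main obstacle is passing from this character-wise conclusion to a genuine $m$-th root of $\sigma(x)^c$ inside $T_\ell(\ZZ_\ell)$: this is immediate when $T_\ell$ is split over $\QQ_\ell$, but in general requires care with the Galois descent on the torus, using that the homotheties are $\QQ_\ell$-rational and their image is diagonal across the simple factors. A minor adjustment is also needed when $\ell = 2$ because of the slightly different structure of $\ZZ_2^\times$.
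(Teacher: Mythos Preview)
Your detour through the simple factors $V$ of $W$ is unnecessary and creates the very obstacle you flag at the end. The exponent computation in your third paragraph already establishes $(\ZZ_\ell^\times)^c \subset (\ZZ_\ell^\times)^m$ as subgroups of $\ZZ_\ell^\times$ itself: for every $x \in \ZZ_\ell^\times$ there exists $y \in \ZZ_\ell^\times$ with $x^c = y^m$. Since Deligne's remark gives $\ZZ_\ell^\times \subset H_\ell(\ZZ_\ell)$, applying $\sigma$ yields
\[
\sigma(x)^c = \sigma(x^c) = \sigma(y^m) = \sigma(y)^m \in T_\ell(\ZZ_\ell)^m \subset \sigma_\ell^{\mathrm{ab}}(G_K),
\]
and pulling back along the faithful $\sigma$ finishes the proof. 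This is exactly the paper's argument: it requires no decomposition of $W$ into simple factors, no knowledge of the weights $k_V$, no Galois descent on $T_\ell$, and no separate treatment of $\ell=2$, because the $m$-th root is found already inside the one-parameter subgroup of homotheties.

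In short, your ``main obstacle'' is self-inflicted. Once you recognise that the exponent of $\ZZ_\ell^\times/(\ZZ_\ell^\times)^m$ divides $c$ --- which you have computed correctly, and which in fact holds for $\ell=2$ as well --- the proof is complete without any character-wise analysis.
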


\begin{proof}
Par la remarque de Deligne (\cite{Serre77}, 2, 3), on a : $$\mathbb{Z}_{\ell}^{\times} \subset H_{\ell}(\mathbb{Z}_{\ell}).$$ Puisque $\sigma$ induit une représentation fidèle de $H_{\ell}(\mathbb{Q}_{\ell})/H_{\ell}(\mathbb{Q}_{\ell})'$, la proposition précédente montre que l'indice $c$ de $\mathbb{Z}_{\ell}^{\times}$ dans $\rho_{\ell}^{\mathrm{ab}}(G_K)$ est fini et divise $m$. On en déduit que $c$ divise $(\ell-1)\ell^{v_{\ell}(m)}$, et comme : $$v_{\ell}(m) \leq 1+2v_{\ell}(c(g)!),$$ le corollaire est démontré.
\end{proof}

\noindent
{\it Remarques.} On observe que : $$c \leq \big( \ell c(g)!\big)^2.$$
Comme on l'a déjà vu, si $K$ est non ramifié au-dessus de $\ell$, on peut appliquer le corollaire 2.4 de \cite{Wintenberger02a} qui donne : $\mathbb{Z}_{\ell}^{\times} \subset \rho_{\ell}^{\mathrm{ab}} (G_K).$ Le résultat précédent ne sert donc que si $\ell$ divise $\Delta_K$.

\section{Homothéties et bornes uniformes pour la torsion}

On revient pour finir sur le problème de Manin-Mumford et la recherche de bornes uniformes, aussi explicites que possible, pour la torsion dans les sous-variétés de $A$. Les cas les plus favorables sont ceux où la constante de Serre peut être le mieux explicitée. Ils se produisent par exemple lorsque $A$ est une puissance de courbe elliptique ou une variété abélienne de type CM.

\subsection{Constante de Serre dans des cas particuliers}

Commençons par regarder le cas où $A=E$ est une courbe elliptique. Il est alors possible d'expliciter entièrement la constante de Serre. Une borne précise pour les grands premiers, reposant sur des travaux de Momose (\cite{Momose95}), est donnée par le théorème 39 de \cite{Eckstein05}. On note $h_K$ le nombre de classes de $K$.
\begin{theorem}[Eckstein]
On suppose que $E$ n'est pas de type CM. Si $$\ell \geq  \max \big\{(48 [K: \mathbb{Q}]h_K)^{3(48 [K: \mathbb{Q}]h_K)^2}, \Delta_{K(E[3])}\big\},$$ on a : $\mathbb{Z}_{\ell}^{\times} \subset G_{\ell}.$
\end{theorem}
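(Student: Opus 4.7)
La preuve se d\'ecompose suivant la suite exacte
\[
1 \longrightarrow \mathrm{SL}_2 \longrightarrow \mathrm{GL}_2 \xrightarrow{\det} \Gm \longrightarrow 1.
\]
Comme $E$ n'a pas de multiplication complexe, le th\'eor\`eme d'image ouverte de Serre assure $\mathcal{H}_\ell = \mathrm{GL}_2$, donc $\mathcal{S}_\ell = \mathrm{SL}_2$. Le plan est d'\'etablir s\'epar\'ement l'inclusion $\mathrm{SL}_2(\ZZ_\ell) \subset G_\ell$ gr\^ace \`a la premi\`ere borne, puis $\det(G_\ell) = \ZZ_\ell^\times$ gr\^ace \`a la seconde ; leur conjonction entra\^ine $G_\ell = \mathrm{GL}_2(\ZZ_\ell)$, et a fortiori $\ZZ_\ell^\times \subset G_\ell$.

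La premi\`ere inclusion repose sur une version effective du th\'eor\`eme d'image ouverte modulo $\ell$ due \`a Momose (\cite{Momose95}), dont d\'erive la borne $(48[K:\QQ]h_K)^{3(48[K:\QQ]h_K)^2}$. Je reprendrais la classification des sous-groupes maximaux de $\mathrm{GL}_2(\FF_\ell)$ : si $G(\ell)$ n'est pas surjectif, il est contenu dans un sous-groupe de Borel, dans le normalisateur d'un sous-tore de Cartan d\'eploy\'e ou non-d\'eploy\'e, ou dans un sous-groupe exceptionnel projectivement isomorphe \`a $A_4$, $S_4$ ou $A_5$. Ce dernier cas est \'ecart\'e par une comparaison d'ordres. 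Le cas Borel correspond \`a l'existence d'une $\ell$-isog\'enie $K$-rationnelle, que l'on exclut par les bornes de Mazur--Momose sur les degr\'es d'isog\'enies, renforc\'ees par le facteur $48[K:\QQ]h_K$ issu de la th\'eorie du corps de classes appliqu\'ee aux caract\`eres diagonaux. Le cas du normalisateur de Cartan se traite en analysant conjointement l'action de l'inertie en une place au-dessus de $\ell$ et celle d'un Frob\'enius en une place de bonne r\'eduction. Une fois $G(\ell) \supset \mathrm{SL}_2(\FF_\ell)$ obtenu, l'inclusion $G_\ell \supset \mathrm{SL}_2(\ZZ_\ell)$ d\'ecoule d'un argument de rel\`evement standard : le noyau de $\mathrm{SL}_2(\ZZ_\ell) \twoheadrightarrow \mathrm{SL}_2(\FF_\ell)$ est un pro-$\ell$ groupe et $\mathrm{SL}_2(\FF_\ell)$ est parfait puisque $\ell \geq 5$.

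Pour la seconde assertion, j'exploiterais la condition $\ell \geq \Delta_{K(E[3])}$, qui garantit que $\ell$ est non ramifi\'e dans $K(E[3])$. En rempla\c{c}ant au besoin $K$ par $K(E[3])$ -- substitution qui fait seulement cro\^itre $G_\ell$ et pr\'eserve donc la conclusion -- on obtient simultan\'ement la r\'eduction semi-stable partout pour $E$ (crit\`ere classique via la $3$-torsion) et la non-ramification de $\ell$ dans le corps de base. Le corollaire 2.4 de \cite{Wintenberger02a}, invoqu\'e aussi bien dans la preuve de la Proposition 2.8 que dans la remarque suivant le Corollaire 4.4, s'applique alors \`a $\rho_\ell^{\mathrm{ab}} = \det \circ \rho_\ell$ et donne $\ZZ_\ell^\times \subset \det(G_\ell)$.

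L'obstacle principal est sans conteste le premier pas. Rendre effective la surjectivit\'e de $\rho(\ell)$ pour $E$ non CM exige un contr\^ole tr\`es fin du cas normalisateur de Cartan non d\'eploy\'e -- cas encore ouvert en pleine g\'en\'eralit\'e et responsable de la difficult\'e de la conjecture d'uniformit\'e de Serre sur $\QQ$. C'est de lui que provient la forme doublement exponentielle de la borne, via des calculs de groupes de classes dans des sous-extensions cyclotomiques dont le degr\'e est contr\^ol\'e par $[K:\QQ] \cdot h_K$.
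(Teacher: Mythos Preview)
The paper does not furnish its own proof of this statement: it is quoted as th\'eor\`eme~39 of \cite{Eckstein05}, with the sole indication that the bound rests on work of Momose. Your sketch --- the $\mathrm{SL}_2/\det$ decomposition, Momose's effective classification for the image of $\rho(\ell)$, and a ramification argument for the determinant --- is the expected architecture and is consistent with that attribution.

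Two points nonetheless call for correction. First, replacing $K$ by $K(E[3])$ makes $G_\ell$ \emph{d\'ecro\^itre}, not cro\^itre: $G_{K(E[3])}\subset G_K$, so the image of $\rho_\ell$ can only shrink. The conclusion is still preserved, but for the opposite reason --- if the smaller image already contains $\ZZ_\ell^\times$, a fortiori so does the larger one. Second, your determinant step is more elaborate than necessary. For an elliptic curve $\det\circ\rho_\ell$ is the cyclotomic character $\chi_\ell$, and $\chi_\ell(G_K)=\ZZ_\ell^\times$ is equivalent to $K\cap\QQ(\mu_{\ell^\infty})=\QQ$; since $\QQ(\mu_{\ell^\infty})/\QQ$ is totally ramified at $\ell$, this holds as soon as $\ell$ is unramified in $K$, which already follows from $\ell>\Delta_K$. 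Neither the passage to $K(E[3])$, nor semi-stable reduction, nor the corollary of \cite{Wintenberger02a} is needed for this half of the argument.
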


Pour un résultat sans restriction sur $\ell$, on peut utiliser les estimations données par Lombardo dans le problème de l'image ouverte de Serre (\cite{Lombardo15}, théorème 1.1). On rappelle que la hauteur de Faltings de $E$ est notée $h_F(E)$.

\begin{theorem}[Lombardo]
\label{homotheties elliptiques}
On suppose que $E$ n'est pas de type CM. Alors $(\mathbb{Z}_{\ell}^{\times})^c \subset G_{\ell},$ où $$c \leq e^{1,9 \cdot 10^{10}}  \big( [K: \mathbb{Q}] \max \{ 1, h_F(E), \log [K : \mathbb{Q}] \}\big)^{12395}.$$
\end{theorem}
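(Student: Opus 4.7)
Le plan est d'invoquer la version effective du théorème de l'image ouverte de Serre due à Lombardo. Puisque $E$ n'est pas de type CM, l'enveloppe algébrique $H_\ell$ coïncide avec $\mathrm{GL}_2$ pour tout premier $\ell$, et $G_\ell$ est un sous-groupe ouvert -- donc d'indice fini -- de $\mathrm{GL}_2(\mathbb{Z}_\ell)$. Je commencerais par appliquer le théorème 1.1 de \cite{Lombardo15}, qui quantifie cet indice de manière explicite et uniforme en $\ell$, en fournissant
$$ N(\ell) := [\mathrm{GL}_2(\mathbb{Z}_\ell) : G_\ell] \leq e^{1{,}9 \cdot 10^{10}} \big( [K: \mathbb{Q}] \max \{ 1, h_F(E), \log [K: \mathbb{Q}] \} \big)^{12395}.$$

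Le passage aux homothéties serait alors purement élémentaire. En posant $\Lambda_\ell := \mathbb{Z}_\ell^\times \cap G_\ell$, on observe que l'indice de $\Lambda_\ell$ dans $\mathbb{Z}_\ell^\times$ divise $N(\ell)$, donc l'exposant du quotient fini $\mathbb{Z}_\ell^\times / \Lambda_\ell$ le divise également. Il en résulte immédiatement que $(\mathbb{Z}_\ell^\times)^{N(\ell)} \subset \Lambda_\ell \subset G_\ell$, ce qui établit la majoration annoncée (et montre même que la constante obtenue divise $N(\ell)$).

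L'obstacle principal se concentre ainsi entièrement dans la preuve du théorème de Lombardo lui-même, qui combine plusieurs ingrédients délicats: une version effective de la surjectivité modulo $\ell$ à la Serre (reposant notamment sur les bornes d'isogénies de Masser-Wüstholz affinées par Pellarin), un contrôle fin de l'algèbre de Lie entière associée à $G_\ell$ pour relever cette information à la représentation $\ell$-adique, puis un recollement global sur la famille des premiers. Cette partie dépasse largement le cadre de notre discussion; je me contenterais d'en invoquer le résultat final. Il est également intéressant de noter qu'aucune alternative reposant uniquement sur le schéma développé dans les sections précédentes (version explicite du théorème de Bogomolov couplée au Théorème \ref{wintenberger}) n'est actuellement disponible, faute d'une majoration effective de $s(E, \ell)$ valable pour tout $\ell$.
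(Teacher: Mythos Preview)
Your proposal is correct and matches the paper's approach: the paper does not prove this statement at all but simply attributes it to Lombardo, citing \cite{Lombardo15}, th\'eor\`eme~1.1, as a black box. Your elaboration --- invoking Lombardo's index bound $[\mathrm{GL}_2(\mathbb{Z}_\ell):G_\ell]$ and then observing that $(\mathbb{Z}_\ell^\times)^{N(\ell)}\subset G_\ell$ by the elementary divisibility argument on the quotient $\mathbb{Z}_\ell^\times/(\mathbb{Z}_\ell^\times\cap G_\ell)$ --- is exactly the implicit step needed to pass from Lombardo's formulation to the homothety statement, and the paper offers nothing beyond this.
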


\medskip

\noindent
{\it Remarque.} Une étude fine de la représentation modulo $\ell$ est faite dans \cite{ADavid11}. Dans le cas des surfaces abéliennes, une borne pour les grands premiers peut être dérivée de \cite{Lombardo16}. 

\medskip

Les variétés abéliennes de type CM donnent lieu à des estimations particulièrement efficaces (voir \cite{Eckstein05}, théorème 6). Le problème porte alors essentiellement sur les représentations abéliennes et peut être attaqué grâce à la théorie du corps de classe dans le cas CM. 

\begin{theorem}[Eckstein]
\label{homotheties CM}
Si $A$ est de type CM, on a : $$\forall \ell \in \mathcal{P} \; : \; (\mathbb{Z}_{\ell}^{\times})^{c} \subset G_{\ell},$$ où $c=[K: \mathbb{Q}] \cdot |\mathrm{GL}_{2g}(\mathbb{F}_3)|.$
\end{theorem}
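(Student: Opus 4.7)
\noindent\textit{Esquisse de preuve.} L'id\'ee centrale est que l'hypoth\`ese CM force l'enveloppe alg\'ebrique $H_{\ell}$ \`a \^etre un tore apr\`es une extension convenable de $K$, ce qui ram\`ene le probl\`eme \`a la th\'eorie du corps de classe pour les caract\`eres de Hecke alg\'ebriques, dont les formules explicites permettent un contr\^ole fin des homoth\'eties ind\'ependant de $\ell$.

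On commence par remplacer $K$ par $K' := K(A[3])$, extension de degr\'e divisant $|\mathrm{GL}_{2g}(\mathbb{F}_3)|$. Par le lemme de connexit\'e de Larsen-Pink (\cite{Larsen-Pink97}), $H_{\ell}$ est connexe pour tout $\ell$ sur $K'$, et l'hypoth\`ese CM combin\'ee \`a la th\'eorie de Shimura-Taniyama-Serre montre alors que $H_{\ell}$ est un tore. Par cons\'equent $H_{\ell}(\mathbb{Q}_{\ell})' = 1$ et $\rho_{\ell} = \rho_{\ell}^{\mathrm{ab}}$ ; toute la th\'eorie du paragraphe pr\'ec\'edent s'applique directement en prenant $\sigma = \mathrm{id}$, $T_{\ell} = H_{\ell}$ et $n = 2g$.

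La cl\'e est alors d'exploiter la description explicite de $\rho_{\ell}^{\mathrm{ab}}$ (\cite{Serre68}, III) par un caract\`ere de Hecke alg\'ebrique de type $A_0$ sp\'ecifique \`a la CM. Sur l'inertie en une place $v | \ell$, sa restriction prend la forme $\chi_v = \phi_v \circ N_{K'_v/\mathbb{Q}_{\ell}}$ o\`u $\phi_v$ est alg\'ebrique, donn\'e par le type CM. L'application de la Proposition \ref{logarithme} permet alors de montrer que $\chi_v$ capture enti\`erement l'image de $\phi_v(N_{K'_v/\mathbb{Q}_{\ell}}(U_{K'_v}))$, et en particulier la diagonale $(\mathbb{Z}_{\ell}^{\times})^{[K'_v : \mathbb{Q}_{\ell}]}$ (via l'exposant du conoyau de la norme locale, qui divise $[K'_v : \mathbb{Q}_{\ell}]$ par la th\'eorie du corps de classe local). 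En regroupant les contributions aux places divisant $\ell$ et en utilisant la remarque de Deligne ($\mathbb{Z}_{\ell}^{\times} \subset T_{\ell}(\mathbb{Z}_{\ell})$), on obtient $(\mathbb{Z}_{\ell}^{\times})^{[K' : \mathbb{Q}]} \subset \rho_{\ell}^{\mathrm{ab}}(G_{K'}) \subset G_{\ell}$. Comme $[K' : \mathbb{Q}]$ divise $[K : \mathbb{Q}] \cdot |\mathrm{GL}_{2g}(\mathbb{F}_3)|$, on conclut.

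Le principal obstacle est l'identification pr\'ecise de l'image locale du caract\`ere de Hecke et la v\'erification que les \og contributions parasites \fg \, (interaction avec le noyau du logarithme $\ell$-adique, torsion dans les unit\'es locales, sauts de ramification) s'absorbent toutes dans l'exposant $[K':\mathbb{Q}]$, sans d\'ependance suppl\'ementaire en $\ell$. Ceci est assur\'e dans le cas CM gr\^ace \`a la rigidit\'e de la description par type CM (qui remplace avantageusement l'exposant g\'en\'erique $n!(\ell^{n!}-1)\ell^{1+v_{\ell}(n!)}$ de la preuve du Th\'eor\`eme \ref{bogomolov} par une quantit\'e purement globale), mais ne serait plus vrai pour une repr\'esentation ab\'elienne g\'en\'erale.
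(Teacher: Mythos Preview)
The paper does not prove this theorem; it simply cites Eckstein's thesis and remarks in one sentence that in the CM case the problem reduces to abelian representations and is handled by class field theory. Your sketch follows that hint and is broadly on the right track, but two technical points are misdescribed and would not go through as written.

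First, the passage to $K'=K(A[3])$ is not justified by Larsen--Pink. The paper's own connectedness lemma only yields connected monodromy over $K(A[12])$, and \cite{Larsen-Pink97} says nothing specific about $A[3]$. The correct reason $K(A[3])$ appears is Silverberg's theorem that $\mathrm{End}_{\bar K}(A)$ is already defined over $K(A[3])$. Once the CM order acts $K'$-rationally, the image of $\rho_\ell|_{G_{K'}}$ lies in the commutative group $(\mathcal{O}_E\otimes\mathbb{Z}_\ell)^\times$, and the Shimura--Taniyama description by an algebraic Hecke character shows directly that its Zariski closure is a torus; no separate connectedness input is required.

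Second, the local formula $\chi_v=\phi_v\circ N_{K'_v/\mathbb{Q}_\ell}$ is incorrect. The algebraic Hecke character restricted to $\mathcal{O}_{K'_v}^\times$ is a product over the embeddings $K'_v\hookrightarrow\overline{\mathbb{Q}}_\ell$ weighted by the CM type, with values in $(E\otimes\mathbb{Q}_\ell)^\times$; it does not in general factor through the norm to $\mathbb{Q}_\ell$. Consequently your step ``the cokernel of the norm has exponent dividing $[K'_v:\mathbb{Q}_\ell]$'' does not yield the inclusion you claim. The actual mechanism is rather that on the diagonal $\mathbb{Z}_\ell^\times\subset\prod_{v\mid\ell}\mathcal{O}_{K'_v}^\times$ the \emph{algebraic part} of the Hecke character acts by an explicit monomial determined by the CM type, and the finite discrepancy between the Hecke character and its algebraic part is bounded by the conductor; this is where ramification of $K$ above $\ell$, and hence the factor $[K:\mathbb{Q}]$, enters. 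This reading is confirmed by the paper's remark after the theorem that the factor $[K:\mathbb{Q}]$ may be dropped when $\ell\nmid\Delta_K$, and that $c=1$ when in addition $A$ has good reduction at $\ell$.

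Your closing paragraph correctly isolates the crux (absorbing the finite parasitic terms into a quantity independent of $\ell$), but the absorption mechanism you propose, via the norm cokernel, is not the one that works.
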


\noindent
La valeur de l'exposant est le plus souvent meilleure : si $\ell \nmid \Delta_K$, on peut prendre $c= |\mathrm{GL}_{2g}(\mathbb{F}_3)|$. Si, de plus, $A$ admet bonne réduction en $\ell$, on peut prendre $c=1$.

\subsection{Bornes uniformes dans le problème de Manin-Mumford} Commençons par rappeler certains des résultats obtenus dans \cite{Galateau-Martinez17}. On fixe un plongement de $A$ dans un espace projectif de dimension minimale $2g+1$ (suivant \cite{Shafarevich94}, 5.4, théorème 9). Si $V$ est une sous-variété de $A$, on note $\delta(V)$ le \og degré de définition \fg \, de $V$, c'est-à-dire le plus petit $d$ tel que $V$ est l'intersection d'hypersurfaces de $A$ de degré au plus $d$. 

\medskip

Par le théorème de Raynaud (\cite{Raynaud83a}), on sait que $V$ contient un nombre fini de translatés de sous-variétés abéliennes de $A$ par des points de torsion. Le nombre $T(V)$ de tels translatés qui sont maximaux pour l'inclusion peut être borné uniformément en fonction de $\delta(V)$, de $g$ et de la constante de Serre $c(A)$ (\cite{Galateau-Martinez17}, théorème 4.5 et la remarque qui suit la preuve). 
\begin{theorem}
On a :
$$T(V) \leq 16^{(c(A)+3)g^3} \delta(V)^g.$$
\end{theorem}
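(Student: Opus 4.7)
La preuve suivra la strat\'egie de \cite{Galateau-Martinez17}, combinant une variante de l'approche Beukers-Smyth adapt\'ee aux vari\'et\'es ab\'eliennes (initi\'ee par Hindry dans \cite{Hindry88}) avec le contr\^ole galoisien fourni par la constante de Serre. La premi\`ere \'etape consiste en une r\'eduction : \`a chaque translat\'e maximal $B+\tau \subset V$ (avec $B$ sous-vari\'et\'e ab\'elienne de $A$) on associe, via passage au quotient $A/B$, un point de torsion isol\'e dans l'image de $V$ dans $A/B$. Il suffit donc de majorer, pour chaque sous-vari\'et\'e ab\'elienne $B$ pouvant intervenir, le nombre de points de torsion isol\'es dans la projection. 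Le nombre de tels $B$ \'etant contr\^ol\'e par la d\'ecomposition d'isog\'enie de $A$, la contribution correspondante (dont la nature est purement combinatoire en $g$) s'absorbe dans la constante finale.

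Pour le comptage des points de torsion isol\'es, on exploite pleinement l'inclusion $(\ZZ_{\ell}^{\times})^{c(A)} \subset G_{\ell}$. Soit $\tau \in V$ un tel point, d'ordre $N$. Pour tout $a \in (\ZZ/N\ZZ)^{\times}$ s'\'ecrivant comme une puissance $c(A)$-i\`eme modulo chaque $\ell \mid N$, la multiplication $[a]\tau$ est un conjugu\'e galoisien de $\tau$, donc appartient \`a $V$. En combinant cette observation avec une variante du th\'eor\`eme de Chebotarev et un choix soigneux de petits premiers auxiliaires $\ell_1, \ldots, \ell_s \nmid N$, on construit une famille d'entiers $a_1, \ldots, a_s$ avec $a_i$ de taille polynomiale en $g$ et $c(A)$, telle que les points $[a_i]\tau$ sont tous distincts et contenus dans $V$.

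L'\'etape g\'eom\'etrique repose sur Bezout dans le plongement projectif de dimension $2g+1$ fix\'e a priori. Si $F_1, \ldots, F_r$ sont des \'equations de $V$ de degr\'e au plus $\delta(V)$, les tir\'es en arri\`ere $[a_i]^* F_j$ sont des hypersurfaces de degr\'e major\'e par $a_i^2 \delta(V)$. En intersectant $V$ avec une collection judicieuse de ces tir\'es en arri\`ere, on isole un sous-sch\'ema de dimension z\'ero contenant l'ensemble des $[a_i]\tau$. Un contr\^ole fin des multiplicit\'es d'intersection, combin\'e au th\'eor\`eme de Bezout et au choix optimal des $a_i$, conduit finalement \`a la majoration $T(V) \leq 16^{(c(A)+3)g^3}\delta(V)^g$.

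La difficult\'e principale, et l\`a o\`u se concentrera l'essentiel du travail technique, sera l'\'equilibrage entre le nombre $s$ de relations auxiliaires (qui doit \^etre suffisant pour d\'ecouper une intersection de dimension nulle, et cro\^it avec la taille de l'orbite galoisienne disponible) et la taille des entiers $a_i$ (qui dilate les degr\'es des tir\'es en arri\`ere). Il est crucial de maintenir purement g\'eom\'etrique l'exposant $g$ en $\delta(V)$ et de confiner toute la d\'ependance en la constante de Serre dans le facteur multiplicatif $16^{(c(A)+3)g^3}$. La gestion de la d\'ecomposition d'isog\'enie interm\'ediaire, et le traitement du cas o\`u $N$ est hautement compos\'e (qui restreint les congruences disponibles pour les $a_i$), constituent les obstacles secondaires.
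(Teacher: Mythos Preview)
The paper does not prove this theorem: it is cited as th\'eor\`eme~4.5 of \cite{Galateau-Martinez17} (together with the remark following its proof there). The only methodological indication in the present paper is the sentence in the introduction describing the approach as a Beukers--Smyth variant combined with ``de nouveaux arguments d'interpolation''.

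Your sketch is in the right spirit (Galois action via the homothety exponent, then a geometric degree count), but the reduction step contains a genuine gap. You assert that the abelian subvarieties $B$ arising from the maximal torsion cosets are ``contr\^ol\'e[s] par la d\'ecomposition d'isog\'enie de $A$'' with a contribution that is ``purement combinatoire en $g$''. This is not true: already for $A=E^g$ there are infinitely many abelian subvarieties of each intermediate dimension, and no function of $g$ alone bounds their number. Any control on the $B$'s that actually occur would have to come from $\delta(V)$ (or from $\deg A$), and feeding that back into the count would contaminate the exponent of $\delta(V)$ and spoil the clean form $\delta(V)^g$. The argument in \cite{Galateau-Martinez17} does not reduce to isolated torsion points via successive quotients $A/B$; it handles all maximal cosets at once, and the ``interpolation'' step explicitly flagged in the introduction --- which is absent from your outline --- is precisely what keeps the exponent of $\delta(V)$ equal to $g$ while confining the dependence on $c(A)$ to the multiplicative constant.
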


\noindent
Si $C$ est une courbe et $A$ est sa jacobienne de dimension $g \geq 2$, le cardinal des points de torsion de $C$, noté $|C_{\mathrm{tors}}|$, vérifie (\cite{Galateau-Martinez17}, proposition 3.7 et la remarque qui suit la preuve) : $$|C_{\mathrm{tors}}| \leq 4^{(2c(A)+2)g}.$$

\medskip

\noindent
{\it Remarque.} Il est probable qu'on obtienne de meilleurs résultats en séparant les petits premiers pour lesquels l'exposant d'homothétie est mal contrôlé, et les grands premiers pour lesquels il est borné uniformément. Pour ce faire, on pourrait introduire une isogénie dont le noyau contient le sous-groupe de torsion fini pour lequel la borne uniforme n'est pas réalisée. 

\medskip

\noindent
{\bf Cas des puissances de courbes elliptiques.} On suppose ici que $A=E^g$, où $E$ est une courbe elliptique non CM. Ce cas est particulièrement intéressant pour le problème de Manin-Mumford, dans la mesure où $A$ admet beaucoup de sous-variétés de torsion. Si $V$ est une sous-variété de $E^g$, le nombre de sous-variétés maximales est borné en utilisant le Théorème \ref{homotheties elliptiques} : $$T(V) \leq 16^{e^{2 \cdot 10^{10}}   g^3 \cdot \big( [K: \mathbb{Q}] \max \{ 1, h_F(E), \log [K : \mathbb{Q}] \}\big)^{12395}} \delta(V)^g,$$

\noindent
et si $C$ est une courbe de $E^g$ qui n'est pas la translatée d'une courbe elliptique par un point de torsion, on a : $$|C_{\mathrm{tors}}| \leq 4^{e^{2 \cdot 10^{10}}   g \cdot \big( [K: \mathbb{Q}] \max \{ 1, h_F(E), \log [K : \mathbb{Q}] \}\big)^{12395} }.$$

\medskip

\noindent
{\bf Cas des variétés abéliennes CM.} On suppose que $A$ est de type CM. Si $V$ est une sous-variété de $A$, le nombre de sous-variétés maximales est borné en utilisant le Théorème \ref{homotheties CM} : $$T(V) \leq 16^{ [K:\mathbb{Q}] \cdot 3^{5g^2}} \delta(V)^g.$$

\noindent
Si $C$ est une courbe de $A$ qui n'est pas la translatée d'une courbe elliptique par un point de torsion, on a : $$|C_{\mathrm{tors}}| \leq 4^{[K: \mathbb{Q}] \cdot 3^{5g^2}}.$$

\noindent
On pourra comparer avec les bornes classiques données par Coleman (\cite{Coleman85}) et Buium (\cite{Buium96}).

\subsection{L'effectivité dans le problème de Manin-Mumford} Le principe central pour résoudre effectivement un problème diophantien consiste à trouver des bornes calculables pour la hauteur de Weil et le degré de ses solutions. 

\medskip

Concernant le problème de Manin-Mumford, c'est théoriquement possible. Expliquons le cas d'une courbe $C$ plongée dans sa jacobienne $J$, de dimension $g$ et définie sur $K$. La hauteur de Weil d'un point de torsion $x \in J(\bar{K})$ est bornée de façon totalement explicite. En effet, sa hauteur canonique est nulle et la différence entre les deux hauteurs est contrôlée par les travaux de Manin et Zarhin (voir par exemple \cite{David-Philippon02}, proposition 3.9). On obtient : $$h(x) = \big|h(x) - \hat{h}(x) \big| \leq 4^{g+1}h_F(A)+3g \log(2).$$ 

D'autre part, on peut borner le nombre de points de torsion de $C$ en fonction de $g$, $h_F(J)$ et $\Delta_K$. Notons $N(J)$ le \og conducteur \fg \, de $J$, c'est-à-dire le produit des normes des idéaux premiers de mauvaise réduction de $J$. En combinant \cite{Rosser-Schoenfeld62}, (3.9) avec \cite{Pazuki16}, théorème 1.1 et le théorème de Minkowski, on peut trouver un premier $p \in \mathcal{P}$ qui ne divise pas $(2g+1) \cdot \Delta_K \cdot N(J)$ et vérifiant : $$p \leq (12g)^{25g^{12g^{4g}}}  \max \big\{ 1, \log \Delta_K, h_F(J) \big\}^2 .$$  Le théorème principal de \cite{Buium96} donne alors : $$|C_{\mathrm{tors}}| \leq (12g)^{26g^{12g^{4g}}}  \max \big\{ 1, \log \Delta_K, h_F(J) \big\}^{8g+4}.$$ On peut ensuite majorer le degré d'un point de torsion en remarquant que si $x \in C_{\mathrm{tors}}$, on a encore $x^{\sigma} \in C_{\mathrm{tors}}$ pour tout $\sigma \in G_K$. Il suit : $$[\mathbb{Q}(x): \mathbb{Q}] \leq [K:\mathbb{Q}] \cdot |C_{\mathrm{tors}}|.$$

\medskip

Les coordonnées projectives des points de torsion recherchés sont alors les solutions d'un nombre fini d'équations algébriques à coefficients entiers, dont les coefficients sont explicitement bornés. Si on dispose de bornes calculables pour le nombre de translatés de torsion maximaux d'une sous-variété de $A$, on peut par le même procédé déterminer explicitement ces translatés.

\bibliographystyle{smfplain}
\bibliography{biblio}

\providecommand{\bysame}{\leavevmode ---\ }
\providecommand{\og}{``}
\providecommand{\fg}{''}
\providecommand{\smfandname}{\&}
\providecommand{\smfedsname}{\'eds.}
\providecommand{\smfedname}{\'ed.}
\providecommand{\smfmastersthesisname}{M\'emoire}
\providecommand{\smfphdthesisname}{Th\`ese}
\begin{thebibliography}{10}

\bibitem{Beukers-Smyth02}
{\scshape F.~Beukers {\normalfont \smfandname} C.~Smyth} -- {\og Cyclotomic
  points on curves\fg}, \emph{Number Theory for the millenium, I} (2002),
  p.~67--85.

\bibitem{Bogomolov80}
{\scshape F.~Bogomolov} -- {\og Sur l'alg{\'e}bricit{\'e} des
  repr{\'e}sentations $l$-adiques\fg}, \emph{C. R. Acad. Sci. Paris}
  \textbf{290} (1980), no.~15, p.~701--703.

\bibitem{Borel69}
{\scshape A.~Borel} -- \emph{Linear algebraic groups}, Grad. Texts in Math.,
  Springer, 1969.

\bibitem{Buium96}
{\scshape A.~Buium} -- {\og Geometry of $p$-jets\fg}, \emph{Duke Math. J.}
  \textbf{82} (1996), p.~349--367.

\bibitem{Coleman85}
{\scshape R.~Coleman} -- {\og Torsion points on curves and $p$-adic abelian
  integrals\fg}, \emph{Ann. of Math.} \textbf{121} (1985), p.~111--168.

\bibitem{ADavid11}
{\scshape A.~David} -- {\og Borne uniforme pour les homoth{\'e}ties dans
  l?image de {G}alois associ{\'e}e aux courbes elliptiques\fg}, \emph{J. Number
  Theory} (2011), p.~2175--2191.

\bibitem{David-Philippon02}
{\scshape S.~David {\normalfont \smfandname} P.~Philippon} -- {\og Minoration
  des hauteurs normalis{\'e}es des sous-vari{\'e}t{\'e}s de vari{\'e}t{\'e}s
  ab{\'e}liennes {I}{I}\fg}, \emph{Comment. Math. Helv.} \textbf{77} (2002),
  p.~639--700.

\bibitem{Deligne82}
{\scshape P.~Deligne} -- \emph{Hodge cycles on abelian varieties}, Lecture
  Notes in Math., vol. 900, Springer, 1982.

\bibitem{Grothendieck70}
{\scshape M.~Demazure {\normalfont \smfandname} A.~Grothendieck} --
  \emph{Structure des schémas en groupes réductifs, {SGA 3, III}}, Lecture
  Notes in Math., vol. 153, Springer, 1970.

\bibitem{Eckstein05}
{\scshape C.~Eckstein} -- {\og Homoth{\'e}ties, {\`a} chercher dans l'action de
  {G}alois sur des points de torsion\fg}, \emph{Th{\`e}se de {D}octorat,
  {U}niversit{\'e} de {S}trasbourg} (2005).

\bibitem{Galateau-Martinez17}
{\scshape A.~Galateau {\normalfont \smfandname} C.~Martinez} -- {\og A bound
  for the torsion on subvarieties of abelian varieties\fg},
  \emph{Pr\'epublication} (2017).

\bibitem{Gaudron-Remond20}
{\scshape {\'E}.~Gaudron {\normalfont \smfandname} G.~R{\'e}mond} -- {\og
  Nouveaux th{\'e}or{\`e}mes d'isog{\'e}nie\fg}, \emph{Pr{\'e}publication}
  (2020).

\bibitem{Grothendieck72}
{\scshape A.~Grothendieck} -- \emph{Groupes de monodromie en géométrie
  algébrique, {SGA 7, I}}, Lecture Notes in Math., vol. 288, Springer, 1972.

\bibitem{Hindry88}
{\scshape M.~Hindry} -- {\og Autour d'une conjecture de {S}erge {L}ang\fg},
  \emph{Invent. Math.} \textbf{94} (1988), p.~575--603.

\bibitem{Lang65}
{\scshape S.~Lang} -- {\og Division points on curves\fg}, \emph{Ann. Mat. Pura
  Appl.} \textbf{70} (1965), no.~1.

\bibitem{Larsen-Pink97}
{\scshape M.~Larsen {\normalfont \smfandname} R.~Pink} -- {\og A connectedness
  criterion for $\ell$-adic galois representations\fg}, \emph{Israel J. Math.}
  \textbf{97} (1997), p.~1--10.

\bibitem{Lombardo15}
{\scshape D.~Lombardo} -- {\og Bounds for serre's open image theorem for
  elliptic curves over number fields\fg}, \emph{Algebra Number Theory}
  \textbf{9-10} (2015), p.~2347--2395.

\bibitem{Lombardo16}
\bysame , {\og Explicit surjectivity for galois representations attached to
  abelian surfaces\fg}, \emph{J. Algebra} \textbf{460} (2016), p.~26--59.

\bibitem{Marshall71}
{\scshape M.~Marshall} -- {\og Ramification groups of abelian local field
  extensions\fg}, \emph{Can. J. Math.} \textbf{23} (1971), no.~2.

\bibitem{Maus71}
{\scshape E.~Maus} -- {\og On the jumps in the series of ramifications
  groups\fg}, \emph{Mém. Soc. Math. Fr.} \textbf{25} (1971), p.~127--133.

\bibitem{Miki81}
{\scshape H.~Miki} -- {\og On the ramification numbers of cyclic $p$-extensions
  over local fields\fg}, \emph{J. Reine Angew. Math} \textbf{328} (1981),
  p.~99--115.

\bibitem{Momose95}
{\scshape F.~Momose} -- {\og Isogenies of prime degree over number fields\fg},
  \emph{Compos. Math} \textbf{97} (1995), no.~3, p.~329--348.

\bibitem{Pazuki16}
{\scshape F.~Pazuki} -- {\og Heights, ranks and regulators of abelian
  varieties\fg}, \emph{Ramanujan Math. Soc. Lecture Note Series} \textbf{26}
  (2016).

\bibitem{Raynaud83a}
{\scshape M.~Raynaud} -- {\og Sous-vari{\'e}t{\'e}s d'une vari{\'e}t{\'e}
  ab{\'e}lienne et points de torsion\fg}, \emph{Prog. Math.} \textbf{35}
  (1983), p.~327--352.

\bibitem{Remond18}
{\scshape G.~R{\'e}mond} -- {\og Conjectures uniformes sur les vari{\'e}t{\'e}s
  ab{\'e}liennes\fg}, \emph{Quart. J. Math.} \textbf{69} (2018), p.~459--486.

\bibitem{Rosser-Schoenfeld62}
{\scshape G.~Rosser {\normalfont \smfandname} L.~Schoenfeld} -- {\og
  Approximate formulas for some functions of prime numbers\fg}, \emph{Illinois
  J. Math.} \textbf{6} (1962), no.~1, p.~64--94.

\bibitem{Serre62}
{\scshape J.-P. Serre} -- \emph{Corps locaux}, Hermann, Paris, 1962.

\bibitem{Serre68}
\bysame , \emph{Abelian l-adic representations and elliptic curves}, Benjamin,
  New York, 1968.

\bibitem{Serre77}
\bysame , {\og Représentations $\ell$-adiques\fg}, \emph{Kyoto Symposium on
  Algebraic Number Theory} (1977), p.~177--193.

\bibitem{Serre86}
\bysame , \emph{{O}euvres, {C}ollected papers, {IV}}, Springer-Verlag, Berlin,
  1985-1998.

\bibitem{Serre13}
\bysame , {\og Un crit{\`e}re d'ind{\'e}pendance pour une famille de
  repr{\'e}sentations $\ell$-adiques\fg}, \emph{Comment. Math. Helv.}
  \textbf{88} (2013), p.~541--554.

\bibitem{Shafarevich94}
{\scshape I.~R. Shafarevich} -- \emph{Basic algebraic geometry. 1}, second
  \smfedname, Springer-Verlag, Berlin, 1994.

\bibitem{Tate67}
{\scshape J.~Tate} -- {\og $p$-{D}ivisible {G}roups\fg}, \emph{Proceedings of a
  Conference on Local Fields} (1967), p.~158--183.

\bibitem{Wintenberger02}
{\scshape J.~Wintenberger} -- {\og D{\'e}monstration d'une conjecture de {L}ang
  dans des cas particuliers\fg}, \emph{J. Reine Angew. Math.} \textbf{553}
  (2002), p.~1--16.

\bibitem{Wintenberger02a}
\bysame , {\og Une extension de la théorie de la multiplication complexe\fg},
  \emph{J. Reine Angew. Math.} \textbf{552} (2002), p.~1--14.

\bibitem{Zywina19}
{\scshape D.~Zywina} -- {\og An effective open image theorem for abelian
  varieties\fg}, \emph{Pr\'epublication} (2019).

\end{thebibliography}

\end{document}